\theoremstyle{plain}
\newtheorem{theorem}{Theorem}[section]
\newtheorem{corollary}[theorem]{Corollary}
\newtheorem{lemma}[theorem]{Lemma}
\newtheorem{proposition}[theorem]{Proposition}
\theoremstyle{definition}
\theoremstyle{remark}
\newtheorem{remark}[theorem]{Remark}
\begin{document}

\title{Hardy Spaces ($1<p<\infty$) over Lipschitz Domains}

\author{Deng Guantie\thanks{
      E-mail: denggt@bnu.edu.cn, School of Mathematical Sciences, 
      Beijing Normal University, Beijing, China.} 
    and Liu Rong\thanks{
      Corresponding author, E-mail: rong.liu@mail.bnu.edu.cn
      School of Mathematical Sciences, 
      Beijing Normal University, Beijing, China.}
}
\maketitle

\begin{abstract}
  Let $\Gamma$ be a Lipschitz curve on the complex plane $\mathbb{C}$
  and $\Omega_+$ is the domain above $\Gamma$, we define Hardy space 
  $H^p(\Omega_+)$ as the set of holomorphic functions $F$ satisfying 
  $\sup_{\tau>0}(\int_{\Gamma} 
    |F(\zeta+\mathrm{i}\tau)|^p |\,\mathrm{d}\zeta|)^{\frac1p}< \infty$. 
  We mainly focus on the case of $1<p<\infty$ in this paper, 
  and prove that if $F(w)\in H^p(\Omega_+)$, 
  then $F(w)$ has non-tangential boundary limit $F(\zeta)$ a.e.\@ 
  on $\Gamma$, and $F(w)$ is the Cauchy integral of $F(\zeta)$. 
  We denote the conformal mapping from $\mathbb{C}_+$ onto $\Omega_+$ as $\Phi$, 
  and then prove that, $ H^p(\Omega_+)$ is isomorphic to $H^p(\mathbb{C}_+)$, 
  the classical Hardy space on upper half plane, under the mapping
  $T\colon F\to F(\Phi(z))\cdot (\Phi'(z))^\frac{1}{p}$, 
  where $F\in H^p(\Omega_+)$.
\end{abstract}

{\bf Keywords:}
  Hardy space, Lipschitz domain, non-tangential boundary limit, 
  Cauchy integral representation

{\bf 2010 Mathematics Subject Classification:}
  Primary: 30H10, Secondary: 30E20, 30E25

\section{Introduction}
Let $\Omega_+$ be a simply connected open set in the complex plane $\mathbb{C}$, 
and its boundary $\Gamma$ is an oriented locally rectifiable Jordan curve 
such that, $\Omega_+$ lies to the ``left'' of $\Gamma$. Denote the complement of 
$\overline{\Omega_+}$ as $\Omega_-$, then, for $0<p\leqslant\infty$, 
we could define two generalized Hardy spaces $H^p(\Omega_+)$ 
and $H^p(\Omega_-)$, both of which are subsets of holomorphic functions 
on the corresponding domains.

If $1<p<\infty$, as in the case where $\Omega_+$ is 
the upper complex plane $\mathbb{C}_+$, functions in these two spaces 
should have non-tangential boundary limit on $\Gamma$, 
which belongs to $L^p(\Gamma)$. We still denote these two sets 
of non-tangential boundary functions as $H^p(\Omega_+)$ and $H^p(\Omega_-)$, 
by abusing of language. One of the most famous problems about the 
generalized Hardy space, due to Calder\'on, states that, 
whether $L^p(\Gamma)$ is the direct sum of the spaces $H^p(\Omega_+)$ and 
$H^p(\Omega_-)$, for $1<p<\infty$. We could rewrite Calder\'on's problem as 
``$L^p(\Gamma)=H^p(\Omega_+)+H^p(\Omega_-)$?'' for simplicity, 
and the answer of this problem largely depends on the property of $\Gamma$,
based on what we have known for now.

In their book~\cite{01}, Meyer and Coifman provided three different proofs 
of the identity $L^2(\Gamma)=H^2(\Omega_+)+H^2(\Omega_-)$, 
where $\Gamma$ is regular in the sense of Ahlfors, 
except that they only considered boundary limit functions
from right above. The restriction on $\Gamma$, roughly speaking, 
requires that it could not have ``too many zigzags''.

If $0<p<\infty$, the usual way to define $F\in H^p(\Omega_+)$ is 
to require the boundedness of the integrals of $\lvert F\rvert^p$ 
over certain curves tending to the boundary $\Gamma$, and this is 
analogous to the definition of the classic Hardy spaces 
$H^p(\mathbb{C}_+)$ and $H^p(\mathbb{D})$. Here, $\mathbb{D}$ is the unit disk 
on $\mathbb{C}$. Since $H^p(\mathbb{C}_+)$ has been thoroughly studied, 
as is shown in books like \cite{03}~by Duren, \cite{06}~by Garnett, 
\cite{02}~by Deng, it would be convenient that $H^p(\Omega_+)$ could be 
connected with $H^p(\mathbb{C}_+)$ in a natural way.

In this paper, we will focus on the case of $1<p<\infty$, and prove that 
$H^p(\Omega_+)$ is isomorphic to $H^p(\mathbb{C}_+)$ under the mapping 
$T\colon F(w)\to F(\Phi(z))(\Phi'(z))^{\frac1p}$ under the assumption that 
$\Gamma$ is a Lipschitz curve. We will also prove that every function 
in $H^p(\Omega_+)$  is the Cauchy integral of its boundary limit, 
here the boundary limit functions are all non-tangential limits.
Our work is influenced by those of Meyer, Coifman and Duren, 
and provides the possibility to further the study of 
generalized Hardy space by utilizing results of 
the classic Hardy space $H^p(\mathbb{C}_+)$.

\section{Basic Definition}
Let $0<p\leqslant\infty$, and for $1\leqslant p\leqslant\infty$, 
denote $q$ as the conjugate coefficient of $p$, 
which means that $\frac{1}{p}+\frac{1}{q}=1$. 
We first introduce definitions of Hardy spaces over Lipschitz domains.

Let $a\colon\mathbb{R}\to\mathbb{R}$ be a Lipschitz function, 
where $|a(u_1)-a(u_2)|\leqslant M|u_1-u_2|$ 
for all $u_1$, $u_2\in\mathbb{R}$ and fixed $M>0$, 
$\Gamma=\{\zeta(u)=u+\mathrm{i}a(u)\colon u\in\mathbb{R}\}\subset\mathbb{C}$ 
be the graph of $a(u)$, we have $|a'(u)|\leqslant M$ a.e.\@, 
and the arc length measure of $\Gamma$ is 
$\mathrm{d}s= |\mathrm{d}\zeta|= (1+{a'}^2(u))^{1/2}\,\mathrm{d}u$. 
It follows that 
$\mathrm{d}u\leqslant \mathrm{d}s\leqslant (1+M^2)^{1/2}\mathrm{d}u$. 
For $F(\zeta)$ defined on $\Gamma$, since 
$\int_{\Gamma} |F(\zeta)|^p |\mathrm{d}\zeta|
  = \int_{\mathbb{R}} |F(u+\mathrm{i}a(u))|^p
    \cdot |1+\mathrm{i}a'(u)|\,\mathrm{d}u$, we have
\[\int_{\mathbb{R}} \big|F\big(u+\mathrm{i}a(u)\big)\big|^p \,\mathrm{d}u
  \leqslant \int_{\Gamma} |F(\zeta)|^p |\mathrm{d}\zeta|
  \leqslant \sqrt{1+M^2}\int_{\mathbb{R}} 
      \big|F\big(u+\mathrm{i}a(u)\big)\big|^p \,\mathrm{d}u,\]
that is, $F(\zeta)\in L^p(\Gamma,\mathrm{d}s)$ if and only if 
$F(u+\mathrm{i}a(u))\in L^p(\mathbb{R},\mathrm{d}u)$.
The space $L^p(\Gamma,\mathrm{d}s)$ may thus be identified with 
$L^p(\mathbb{R},\mathrm{d}u)$. We let $\Omega_+$ denote the set 
$\{u+\mathrm{i}v\in\mathbb{C} \colon v>a(u)\}$, $\Omega_-$ denote that 
$\{u+\mathrm{i}v\in\mathbb{C} \colon v<a(u)\}$, and $\Gamma_{\tau}$ denote 
$\Gamma+\mathrm{i}\tau= \{\zeta+\mathrm{i}\tau\colon \zeta\in\Gamma\}$ 
for $\tau\in\mathbb{R}$.

Let $F(w)$ be a function which is holomorphic on $\Omega_+$, 
we say that $F(w)\in H^p(\Omega_+)$, if
\[\sup_{\tau>0}\Big(\int_{\Gamma_\tau} |F(w)|^p\,\mathrm{d}s\Big)^{\frac1p}
  = \lVert F\rVert_{H^p(\Omega_+)}<\infty,\quad
  \text{for } 0<p<\infty,\]
or in the case of\/ $p=\infty$,
\[\sup_{w\in\Omega_+}|F(w)|=\lVert F\rVert_{H^\infty(\Omega_+)}<\infty.\]
Notice that 
\[\int_{\Gamma_\tau} |F(w)|^p\,\mathrm{d}s
  =\int_{\Gamma} |F(\zeta+\mathrm{i}\tau)|^p\,\mathrm{d}s.\]

Fix $u_0\in\mathbb{R}$ such that 
$\zeta'(u_0)=|\zeta'(u_0)|\mathrm{e}^{\mathrm{i}\phi_0}$ exists, and 
choose $\phi\in(0,\frac{\pi}2)$, we denote 
$\zeta_0=\zeta(u_0)=u_0+\mathrm{i}a(u_0)$ and let
\[\Omega_{\phi}(\zeta_0)
  =\{\zeta_0+r\mathrm{e}^{\mathrm{i}\theta}
     \colon r>0, \theta-\phi_0\in(\phi,\pi-\phi)\},\]
then we say that a function $F(w)$, defined on $\Omega_+$, 
has non-tangential boundary limit~$l$ at $\zeta_0$ if
\[\lim_{\substack{w\in\Omega_{\phi}(\zeta_0)\cap\Omega_+,\\ w\to\zeta_0}}
  F(w)= l, \quad \text{for any } \phi\in\Big(0,\frac{\pi}2\Big).\]
It is easy to verify that for fixed $\phi\in(0,\frac{\pi}2)$, 
there exists constant $\delta>0$, such that, if $|z|<\delta$ and 
$\zeta_0+z\in\Omega_\phi(\zeta_0)$, then $\zeta_0+z\in\Omega_+$ and 
$\zeta_0-z\in\Omega_-$.

It has been proved in book~\cite{01} that, 
every $F(w)\in H^p(\Omega_+)$ has upright down boundary limit a.e.\@ 
on $\Gamma$, and if we denote the limit function as $F(\zeta)$, 
then $F(\zeta)\in L^p(\Gamma)$, and
\[0=\frac1{2\pi\mathrm{i}} \int_{\Gamma} \frac{F(\zeta)}{\zeta-w}
  \,\mathrm{d}\zeta, \quad
  \text{for }w\in\Omega_-.\]
Besides, $F(w)$ is the Cauchy integral of $F(\zeta)$, that is 
\[F(w)=\frac1{2\pi\mathrm{i}} \int_{\Gamma} \frac{F(\zeta)}{\zeta-w}
    \,\mathrm{d}\zeta, \quad
    \text{for }w\in\Omega_+.\]
Actually, we could further prove that $F(\zeta)$ is 
the non-tangential boundary limit of $F(w)$, which will be shown in 
Corollary~\ref{cor-170622-2122}.

We then turn to definitions of the classical Hardy spaces over $\mathbb{C}_+$, 
the upper half complex plane. If $f(z)$ is holomorphic on 
$\mathbb{C}_+$ and $y>0$, we define
\[m(f,y)=\Big(\int_{\mathbb{R}} \lvert f(x+\mathrm{i}y)\rvert^p 
\,\mathrm{d}x\Big)^{\frac1p},\quad \text{if } 0<p<\infty,\]
or
\[m(f,y)= \sup_{x\in\mathbb{R}}\lvert f(x+\mathrm{i}y)\rvert,\quad 
\text{if } p=\infty,\]
then Hardy space $H^p(\mathbb{C}_+)$ is defined as
\[H^p(\mathbb{C}_+)
= \big\{f(z)\text{ is holomorphic on }\mathbb{C}_+\colon
\sup_{y>0} m(f,y)<\infty\big\},\]
and for $f(z)\in H^p(\mathbb{C}_+)$, define
\[\lVert f\rVert_{H^p(\mathbb{C}_+)}= \sup_{y>0} m(f,y).\]

Let $x_0\in\mathbb{R}$, $\alpha>0$ and 
$\Gamma_\alpha(x_0)=\{(x,y)\in\mathbb{C}_+\colon |x-x_0|<\alpha y\}$,
we say that a function $f(z)$, defined on $\mathbb{C}_+$, 
has non-tangential boudary limit~$l$ at $x_0$, if
\[\lim_{\substack{z\in\Gamma_\alpha(x_0),\\ z\to x_0}}f(z)=l,\quad
\text{for any }\alpha>0.\]
It is well-known that every function $f(z)\in H^p(\mathbb{C}_+)$, 
where $0<p\leqslant\infty$, has non-tangential boundary limit 
a.e.\@ on real axis. We usually denote the limit function as $f(x)$ 
for $x\in\mathbb{R}$, then $\lVert f\rVert_{H^p(\mathbb{C}_+)}
  = \lVert f\rVert_{L^p(\mathbb{R})}$.
If $1\leqslant p\leqslant\infty$ and $f(z)\in H^p(\mathbb{C}_+)$, 
then $f(z)$ is both the Cauchy and Poisson integral of $f(x)$~\cite{06}, 
that is, 
\[f(z)
  = \frac1{2\pi\mathrm{i}} \int_{\mathbb{R}}\frac{f(t)\,\mathrm{d}t}{t-z}
  = \frac1{\pi} \int_{\mathbb{R}} \frac{f(t)y\,\mathrm{d}t}{(x-t)^2+y^2},\]
for $z=x+\mathrm{i}y\in\mathbb{C}_+$, where $x\in\mathbb{R}$ and $y>0$.

By the definitions above, if $a(u)=0$, then $H^p(\Omega_+)= H^p(\mathbb{C}_+)$, 
thus we may consider $H^p(\mathbb{C}_+)$ as a special case of $H^p(\Omega_+)$. 
In this paper, we mainly focus on the case of $1<p<\infty$, then 
$H^p(\Omega_+)$ is a linear vector space equipped with 
the norm $\lVert\cdot\rVert_{H^p(\Omega_+)}$. From now on, 
we will assume that $1<p<\infty$, if not stated otherwise, 
thus $1<q<\infty$ too.

Let $\zeta$, $\zeta_0\in\Gamma$, we define
\[K_z(\zeta,\zeta_0)
  = \frac1{2\pi\mathrm{i}}\bigg(\frac1{\zeta-(\zeta_0+z)}
       - \frac1{\zeta-(\zeta_0-z)}\bigg),\]
for $z\in\mathbb{C}$ and $z\neq \pm(\zeta-\zeta_0)$, 
then $K_z(\zeta,\zeta_0)$ is well-defined and we could write 
\begin{equation}\label{equ-170623-1150}
  K_z(\zeta,\zeta_0)
  = \frac1{\pi\mathrm{i}}\cdot\frac{z}{(\zeta-\zeta_0)^2-z^2},
\end{equation}
We could also verify that, if $\zeta_0+z\in\Omega_+$ and 
$\zeta_0-z\in\Omega_-$, then 
\[\int_{\Gamma} K_z(\zeta,\zeta_0)\,\mathrm{d}\zeta= 1.\]

Since $\Omega_+$ is an open and simply-connected subset of the complex plane, 
there exists a conformal representation $\Phi$ from $\mathbb{C}_+$ 
onto $\Omega_+$, which extends to an increasing homeomorphism of 
$\mathbb{R}$ onto $\Gamma$, that is $\mathrm{Re\,}\Phi'(x)>0$, 
for $x\in\mathbb{R}$ a.e.\@. Define 
$\Sigma= \{x+\mathrm{i}y\in\mathbb{C}\colon 
    x>0, \lvert y\rvert\leqslant Mx\}$, then
$\Phi'(x)\in \overline{\Sigma}$ a.e.\@.

Denote the inverse of $\Phi(z)$ as $\Psi(w)\colon \Omega_+\to\mathbb{C}_+$, 
then 
\[\Phi\big(\Psi(w)\big)=w, \quad\text{for all } w\in\Omega_+,\]
and
\[\Psi\big(\Phi(z)\big)=z, \quad\text{for all } z\in\mathbb{C}_+.\]
Besides, if $w=\Phi(z)$ for $z\in\overline{\mathbb{C}_+}$, 
then $\Phi'(z)\cdot\Psi'(w)=1$, 
thus $\mathrm{Re}\,\Psi'(\zeta)>0$ for $\zeta\in\Gamma$ a.e.\@.
More details about $\Phi$ are in Lemma~\ref{lem-170706-2130} 
and Lemma~\ref{lem-170522-1205}.

We now consider a transform $T$ 
from $H^p(\Omega_+)$ to holomorphic functions on $\mathbb{C}_+$, where
\begin{equation}\label{equ-170602-1910}
  TF(z)= F\big(\Phi(z)\big)\cdot \big(\Phi'(z)\big)^\frac{1}{p}, \quad
    \text{for } F\in H^p(\Omega_+),
\end{equation}
then, obviously, $T$ is linear and one-to-one. The main result of this paper 
is the following theorem.

\begin{theorem}\label{thm-170601-1721}
  If $T$ is defined as above, 
  then $T\colon H^p(\Omega_+)\to H^p(\mathbb{C}_+)$ 
  is linear, one-to-one, onto and bounded. Its inverse 
  $T^{-1}\colon H^p(\mathbb{C}_+)\to H^p(\Omega_+)$ is also bounded.
\end{theorem}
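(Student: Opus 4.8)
The plan is to establish the four properties of $T$ in turn, with the bulk of the work going into showing $T$ maps into $H^p(\mathbb{C}_+)$ and is bounded, and then into showing $T$ is onto with bounded inverse. Linearity and injectivity are immediate: linearity is clear from the formula \eqref{equ-170602-1910}, and injectivity follows because $\Phi$ is a bijection from $\mathbb{C}_+$ onto $\Omega_+$ and $\Phi'(z)\neq 0$ on $\mathbb{C}_+$, so $TF\equiv 0$ forces $F\equiv 0$ on $\Omega_+$. The substance is the norm equivalence $\lVert TF\rVert_{H^p(\mathbb{C}_+)}\asymp \lVert F\rVert_{H^p(\Omega_+)}$.

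First I would try to prove the boundary version of the identity, namely that for $F\in H^p(\Omega_+)$ with non-tangential boundary function $F(\zeta)$ on $\Gamma$, the function $(TF)(x) = F(\Phi(x))\,(\Phi'(x))^{1/p}$ lies in $L^p(\mathbb{R})$ with $\lVert TF\rVert_{L^p(\mathbb{R})}^p = \int_{\mathbb{R}}|F(\Phi(x))|^p\,|\Phi'(x)|\,\mathrm{d}x = \int_{\Gamma}|F(\zeta)|^p\,|\mathrm{d}\zeta| = \lVert F\rVert_{H^p(\Omega_+)}^p$, the last equality being the Cauchy-integral/boundary-limit facts recalled in Section 2 together with the change of variables $\zeta = \Phi(x)$, under which $|\mathrm{d}\zeta| = |\Phi'(x)|\,\mathrm{d}x$ since $\Phi$ is an increasing homeomorphism of $\mathbb{R}$ onto $\Gamma$ that is absolutely continuous (this is where Lemma~\ref{lem-170706-2130} and Lemma~\ref{lem-170522-1205}, which control $\Phi'$, are needed — in particular $\Phi'(x)\in\overline{\Sigma}$ a.e., so $(\Phi'(x))^{1/p}$ is an unambiguous branch and $|(\Phi'(x))^{1/p}| = |\Phi'(x)|^{1/p}$). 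The remaining point is that $TF$ is genuinely the boundary function of the \emph{holomorphic} function $TF(z)$ on $\mathbb{C}_+$ and that this holomorphic function belongs to $H^p(\mathbb{C}_+)$: for this I would show $TF(z)$ has non-tangential limits a.e. equal to $(TF)(x)$ (using that $\Phi$ maps non-tangential approach regions in $\mathbb{C}_+$ into non-tangential approach regions $\Omega_\phi(\Phi(x))$ in $\Omega_+$, a standard consequence of $\Phi$ being conformal with $\Phi'$ bounded and bounded away from $0$ in Stolz angles — again Lemmas~\ref{lem-170706-2130}, \ref{lem-170522-1205}), and then invoke that a holomorphic function on $\mathbb{C}_+$ whose non-tangential boundary values lie in $L^p$ and which is the Poisson integral of those values is in $H^p(\mathbb{C}_+)$; alternatively, bound $m(TF,y)$ directly via the Cauchy integral representation of $F$ on $\Omega_+$.

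For surjectivity I would reverse the construction. Given $g\in H^p(\mathbb{C}_+)$, set $F(w) = g(\Psi(w))\,(\Psi'(w))^{1/p}$ on $\Omega_+$, which is holomorphic since $\Psi$ is conformal and $\Psi'\neq 0$; then formally $TF = g$ because $\Phi'(z)\Psi'(w) = 1$ with $w=\Phi(z)$, provided the branches of the $1/p$-powers are chosen consistently, which they can be since both $\Phi'$ and $\Psi'$ have positive real part. The work is to check $F\in H^p(\Omega_+)$: compute $\int_{\Gamma_\tau}|F|^p\,\mathrm{d}s$ and push it down to $\mathbb{C}_+$. The honest difficulty is that $\Psi(\Gamma_\tau)$ is not a horizontal line $\{\mathrm{Im}\,z = \text{const}\}$, so one cannot directly read off $m(g,y)$; I would instead use the boundary identity just proved in reverse — $F$ has non-tangential boundary values $F(\zeta) = g(\Psi(\zeta))(\Psi'(\zeta))^{1/p}$ with $\int_\Gamma|F(\zeta)|^p|\mathrm{d}\zeta| = \int_{\mathbb{R}}|g(x)|^p\,\mathrm{d}x < \infty$ — and then show the Cauchy integral of this boundary function reproduces $F$ on $\Omega_+$, whence $\sup_{\tau>0}\int_{\Gamma_\tau}|F|^p\,\mathrm{d}s$ is controlled by $\int_\Gamma|F(\zeta)|^p|\mathrm{d}\zeta|$ via the $L^p$-boundedness of the Cauchy projection on the Lipschitz curve $\Gamma$ (equivalently, via the kernel $K_z$ from \eqref{equ-170623-1150} and the facts quoted from \cite{01}). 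This exhibits $F$ as $T^{-1}g$ and gives $\lVert T^{-1}g\rVert_{H^p(\Omega_+)}\leqslant C\lVert g\rVert_{H^p(\mathbb{C}_+)}$.

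The main obstacle I anticipate is precisely the interchange between the three descriptions of the $H^p$ norm — sup over the shifted curves $\Gamma_\tau$, the $L^p$ norm of the non-tangential boundary function, and the Cauchy/Poisson integral representation — in the $\Omega_+$ setting, since $\Gamma_\tau$ is not a level set of $\Psi$ and $\Phi$ need not be uniformly bi-Lipschitz. Resolving this cleanly hinges on the boundary-limit and Cauchy-integral theorem for $H^p(\Omega_+)$ quoted from \cite{01} (and sharpened to non-tangential limits in Corollary~\ref{cor-170622-2122}), together with quantitative control of $\Phi'$, $\Psi'$ on $\mathbb{C}_+$ and their Stolz-angle boundary behaviour from Lemmas~\ref{lem-170706-2130} and \ref{lem-170522-1205}; once those are in hand the two norm estimates close up and Theorem~\ref{thm-170601-1721} follows.
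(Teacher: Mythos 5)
Your overall architecture matches the paper's (show $T(H^p(\Omega_+))\subset H^p(\mathbb{C}_+)$ with $\lVert T\rVert\leqslant 1$, show surjectivity by transporting back with $\Psi$, then obtain boundedness of $T^{-1}$), and you correctly identify where the difficulty sits; but at the two decisive points your plan defers exactly the step that carries the theorem. In the direction $TF\in H^p(\mathbb{C}_+)$ you propose to invoke that a holomorphic function whose non-tangential boundary values lie in $L^p$ \emph{and which is the Poisson integral of those values} is in $H^p(\mathbb{C}_+)$ --- but you never show that $TF$ is the Poisson (or Cauchy) integral of its boundary values, and boundary values in $L^p$ alone do not suffice: $\mathrm{e}^{-\mathrm{i}z}/(z+\mathrm{i})$ has non-tangential boundary values in $L^p(\mathbb{R})$ for $p>1$, yet grows like $\mathrm{e}^{y}/(y+1)$ on the imaginary axis and is not in $H^p(\mathbb{C}_+)$. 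Likewise, in the surjectivity direction, ``show that the Cauchy integral of this boundary function reproduces $F$ on $\Omega_+$'' is asserted, not argued. These two identifications are precisely where the paper's work lies: it writes the difference between the transported function and the Cauchy integral of its boundary data as a pairing $\int_{\mathbb{R}} f(t)h(t)\,\mathrm{d}t$, respectively $\int_{\Gamma}F(\zeta)H(\zeta)\,\mathrm{d}\zeta$, where $h(z)$ and $H(w)$ are explicit differences of two Cauchy-type kernels with a removable singularity at $z_0$ (resp.\ $w_0$); it proves $h\in H^q(\mathbb{C}_+)$ and $H\in H^q(\Omega_+)$ by the monotone-real-part estimates of Lemma~\ref{lem-170522-1245} and Lemma~\ref{lem-170522-1536} (resting on Lemmas~\ref{lem-170522-1205} and \ref{lem-170511-0936}), and then annihilates the pairing by Lemma~\ref{lem-170522-1424}, together with the characterization in Lemma~\ref{lem-170522-1300}. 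Some mechanism of this kind must be supplied; as written, your proposal has a genuine gap at both spots.

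Two smaller points. The claimed equality $\lVert TF\rVert_{L^p(\mathbb{R})}^p=\lVert F\rVert_{H^p(\Omega_+)}^p$ is an overclaim: the change of variables gives $\int_{\mathbb{R}}\lvert f(x)\rvert^p\,\mathrm{d}x=\int_{\Gamma}\lvert F(\zeta)\rvert^p\lvert\mathrm{d}\zeta\rvert$, and Fatou gives only $\leqslant\lVert F\rVert_{H^p(\Omega_+)}^p$, which is all that is needed for $\lVert T\rVert\leqslant 1$; the reverse inequality is not immediate, and the paper avoids it by proving completeness of $H^p(\Omega_+)$ and invoking the open mapping theorem for $T^{-1}$. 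Your alternative quantitative route for $T^{-1}$ --- controlling $\sup_{\tau>0}\int_{\Gamma_\tau}\lvert F\rvert^p\,\mathrm{d}s$ by the boundary norm via the ``$L^p$-boundedness of the Cauchy projection on $\Gamma$'' --- would import the much deeper Coifman--McIntosh--Meyer theorem and would still need an argument on the shifted curves $\Gamma_\tau$ rather than on $\Gamma$ itself, whereas the paper gets this elementarily in Corollary~\ref{cor-170622-2210} from the domination of the two-point kernel $K_{\mathrm{i}\tau}(\zeta,\zeta_0)$ by a Poisson kernel (Lemma~\ref{lem-170622-2050}) plus Minkowski's inequality.
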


\section{Some useful Lemmas}
We introduce some lemmas about $K_z(\zeta,\zeta_0)$ first.
\begin{lemma}\label{lem-170622-2050}
  There exists a constant $C>0$, such that
  \[|K_{\mathrm{i}\tau}(\zeta,\zeta_0)|
    \leqslant \frac{C\tau}{|\zeta-\zeta_0|^2+\tau^2},\quad
    \text{for } \tau>0.\]
\end{lemma}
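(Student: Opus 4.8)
The plan is to compute $K_{\mathrm{i}\tau}$ explicitly from~\eqref{equ-170623-1150} and then bound the resulting denominator from below. Substituting $z=\mathrm{i}\tau$ into~\eqref{equ-170623-1150} gives
\[K_{\mathrm{i}\tau}(\zeta,\zeta_0)
  =\frac1{\pi\mathrm{i}}\cdot\frac{\mathrm{i}\tau}{(\zeta-\zeta_0)^2+\tau^2},
  \qquad\text{so}\qquad
  |K_{\mathrm{i}\tau}(\zeta,\zeta_0)|
  =\frac1\pi\cdot\frac{\tau}{\bigl|(\zeta-\zeta_0)^2+\tau^2\bigr|}.\]
Thus the lemma reduces to the inequality
$\bigl|(\zeta-\zeta_0)^2+\tau^2\bigr|\geqslant c\,\bigl(|\zeta-\zeta_0|^2+\tau^2\bigr)$
for some constant $c=c(M)>0$, after which the claim follows with $C=1/(\pi c)$.

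To establish this inequality I would write $\zeta=u+\mathrm{i}a(u)$, $\zeta_0=u_0+\mathrm{i}a(u_0)$ and set $s=u-u_0$, $t=a(u)-a(u_0)$, so that $\zeta-\zeta_0=s+\mathrm{i}t$ with $|t|\leqslant M|s|$ by the Lipschitz hypothesis. Expanding the square,
\[(\zeta-\zeta_0)^2+\tau^2=\bigl(s^2-t^2+\tau^2\bigr)+2\mathrm{i}st,
  \qquad
  \bigl|(\zeta-\zeta_0)^2+\tau^2\bigr|^2=\bigl(s^2-t^2+\tau^2\bigr)^2+4s^2t^2,\]
whereas $|\zeta-\zeta_0|^2+\tau^2=s^2+t^2+\tau^2=:R^2$. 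Since both sides are homogeneous of degree $2$ under $(s,t,\tau)\mapsto(rs,rt,r\tau)$, it suffices to bound the left side below on the compact set $\{s^2+t^2+\tau^2=1,\ |t|\leqslant M|s|,\ \tau\geqslant0\}$; there $\bigl|(\zeta-\zeta_0)^2+\tau^2\bigr|$ is continuous and cannot vanish, because vanishing would force $st=0$ and $s^2-t^2+\tau^2=0$, and $st=0$ together with $|t|\leqslant M|s|$ gives $t=0$, whence $s^2-t^2+\tau^2=s^2+\tau^2=1\neq0$. Hence it attains a positive minimum, which we take as $c$.

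If an explicit value of $c$ is wanted, I would instead split into cases using $A:=s^2-t^2+\tau^2=R^2-2t^2$. When $|A|\geqslant R^2/\bigl(2(1+M^2)\bigr)$ the real part alone gives $\bigl|(\zeta-\zeta_0)^2+\tau^2\bigr|\geqslant|A|\geqslant R^2/\bigl(2(1+M^2)\bigr)$. Otherwise $|R^2-2t^2|<R^2/\bigl(2(1+M^2)\bigr)$ forces $t^2>c_2R^2$ with $c_2=(2M^2+1)/\bigl(4(1+M^2)\bigr)$, and then $s^2\geqslant t^2/M^2$ yields $\bigl|(\zeta-\zeta_0)^2+\tau^2\bigr|\geqslant2|st|\geqslant(2c_2/M)R^2$; so $c=\min\{1/(2(1+M^2)),\,2c_2/M\}$ works (and $c=1$ trivially when $M=0$).

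The only genuine subtlety is that $(\zeta-\zeta_0)^2+\tau^2$ is a complex quantity which, unlike $|\zeta-\zeta_0|^2+\tau^2$, is not manifestly bounded away from $0$: when $\Gamma$ is steep ($M\geqslant1$) its real part $s^2-t^2+\tau^2$ can be small or even vanish, so the estimate must then be carried by the imaginary part $2st$. The case split above is exactly what arranges that, whenever the real part degenerates, the slope constraint $|t|\leqslant M|s|$ keeps $|st|$ comparable to $R^2$.
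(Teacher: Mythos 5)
Your proof is correct. The reduction to a lower bound of the form $\lvert(\zeta-\zeta_0)^2+\tau^2\rvert\geqslant c\,(\lvert\zeta-\zeta_0\rvert^2+\tau^2)$ is the same as in the paper, but the way you obtain the bound is organized differently. The paper first trichotomizes on the relative size of $\lvert\zeta-\zeta_0\rvert$ and $\tau$ (ratios $\geqslant 3/2$, $\leqslant 2/3$, and in between), and only in the intermediate regime brings in the Lipschitz condition via a further split on $\lvert a(u)-a(u_0)\rvert$ versus $\tau/2$, playing the real part against the imaginary part of $(\zeta-\zeta_0)^2+\tau^2$; this yields the explicit constant $\max\{\tfrac{13}{5},\tfrac{39\sqrt{1+M^2}}{8}\}$. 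You instead work directly with $s=u-u_0$, $t=a(u)-a(u_0)$ and make a single dichotomy: either the real part $s^2-t^2+\tau^2$ is a definite fraction of $R^2=s^2+t^2+\tau^2$, or else $t^2$ is comparable to $R^2$ and the slope constraint $\lvert t\rvert\leqslant M\lvert s\rvert$ forces the imaginary part $2st$ to carry the bound; your homogeneity-plus-compactness variant gives the same conclusion with no constant bookkeeping at all (at the price of a non-explicit $c(M)$). Both arguments produce a constant depending only on $M$, as required; yours is shorter and conceptually cleaner, the paper's is entirely elementary with explicit numerics, and your explicit case split (with $c_2=(2M^2+1)/(4(1+M^2))$ and the separate remark for $M=0$) checks out.
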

\begin{proof}
  By equation~\eqref{equ-170623-1150}, we have 
  \[K_{\mathrm{i}\tau}(\zeta,\zeta_0)
    = \frac1{\pi}\cdot\frac{\tau}{(\zeta-\zeta_0)^2+\tau^2},\]
  then we only need to prove that there exists a constant $C'>0$, such that 
  \begin{equation}\label{equ-170623-1155}
    |\zeta-\zeta_0|^2+\tau^2\leqslant C'|(\zeta-\zeta_0)^2+\tau^2|.
  \end{equation}
  
  If $|\zeta-\zeta_0|\geqslant \frac{3\tau}{2}$, then
  \begin{align*}
  |\zeta-\zeta_0|^2+\tau^2
  &\leqslant |\zeta-\zeta_0|^2+\frac49|\zeta-\zeta_0|^2
  = \frac{13}{9}|\zeta-\zeta_0|^2,                  \\
  |(\zeta-\zeta_0)^2+\tau^2|
  &\geqslant |\zeta-\zeta_0|^2-\tau^2
  \geqslant \frac59|\zeta-\zeta_0|^2,
  \end{align*}
  or if $|\zeta-\zeta_0|\leqslant \frac{2\tau}{3}$, then
  \begin{align*}
  |\zeta-\zeta_0|^2+\tau^2
  &\leqslant \frac49\tau^2+\tau^2
  = \frac{13}{9}\tau^2,                  \\
  |(\zeta-\zeta_0)^2+\tau^2|
  &\geqslant \tau^2-|\zeta-\zeta_0|^2
  \geqslant \frac59\tau^2.
  \end{align*}
  In both cases, we could choose $C'\geqslant\frac{13}{5}$, 
  and the inequality~\eqref{equ-170623-1155} holds.
  
  If $\frac{2\tau}{3}<|\zeta-\zeta_0|<\frac{3\tau}{2}$, then 
  $|\zeta-\zeta_0|^2+\tau^2< \frac{13}{4}\tau^2$. 
  Suppose $\zeta=u+\mathrm{i}a(u)$, $\zeta_0=u_0+\mathrm{i}a(u_0)$, 
  where $u$, $u_0\in\mathbb{R}$, we have 
  \[\frac{2\tau}{3}
    <\big|u-u_0+\mathrm{i}\big(a(u)-a(u_0)\big)\big|
    <\frac{3\tau}{2},\]
  then $\frac{2\tau}{3}<\sqrt{1+M^2}|u-u_0|$ since 
  $|a(u)-a(u_0)|\leqslant M|u-u_0|$. Now that
  \begin{align*}
    &|(\zeta-\zeta_0)^2+\tau^2|\\
    ={} &\bigl|(u-u_0)^2-\big(a(u)-a(u_0)\big)^2 +\tau^2
      + 2\mathrm{i}(u-u_0)\big(a(u)-a(u_0)\big)\bigr|,
  \end{align*}
  if $|a(u)-a(u_0)|\geqslant\frac{\tau}{2}$, then 
  \begin{align*}
    |(\zeta-\zeta_0)^2+\tau^2|
    &\geqslant 2|u-u_0|\cdot|a(u)-a(u_0)|    \\
    &\geqslant \frac{2\tau^2}{3\sqrt{1+M^2}},
  \end{align*}
  and if $|a(u)-a(u_0)|<\frac{\tau}{2}$, then
  \begin{align*}
    |(\zeta-\zeta_0)^2+\tau^2|
    &\geqslant \big|(u-u_0)^2-\big(a(u)-a(u_0)\big)^2 +\tau^2\big|   \\
    &= (u-u_0)^2-\big(a(u)-a(u_0)\big)^2 +\tau^2                   \\
    &> \frac{3\tau^2}{4}.
  \end{align*}
  In either case, if we choose $C'\geqslant\frac{39\sqrt{1+M^2}}{8}$, then
  $|\zeta-\zeta_0|^2+\tau^2\leqslant C'|(\zeta-\zeta_0)^2+\tau^2|$.
  
  Thus, let $C'=\max\{\frac{13}5,\frac{39\sqrt{1+M^2}}{8}\}$, then 
  \[|K_{\mathrm{i}\tau}(\zeta,\zeta_0)|
    \leqslant \frac{C'}{\pi}\cdot\frac{\tau}{|\zeta-\zeta_0|^2+\tau^2},\]
  and the lemma is proved.
\end{proof}

By the proof above, the constant $C$ only depends on $M$, 
and we actually have
\[|K_{\mathrm{i}\tau}(\zeta,\zeta_0)|
  \leqslant \frac{C|\tau|}{|\zeta-\zeta_0|^2+\tau^2},\quad
  \text{for } \tau\in\mathbb{R}.\]
\begin{corollary}\label{cor-170622-2210}
  If $F(\zeta)\in L^p(\Gamma,|\mathrm{d}\zeta|)$, and we define
  \[F(w)
    = \int_\Gamma K_{\mathrm{i}\tau}(\zeta,\zeta_0) F(\zeta)
      \,\mathrm{d}\zeta,\]
  for $w=\zeta_0+\mathrm{i}\tau\in\Omega_+$, 
  where $\zeta_0\in\Gamma$ and $\tau>0$, then 
  \[\sup_{\tau>0} \int_{\Gamma_\tau} |F(w)|^p |\mathrm{d}w|< \infty.\]
\end{corollary}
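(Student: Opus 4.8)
The statement to prove is Corollary~\ref{cor-170622-2210}: if $F(\zeta)\in L^p(\Gamma)$ and we define $F(w)=\int_\Gamma K_{\mathrm{i}\tau}(\zeta,\zeta_0)F(\zeta)\,\mathrm{d}\zeta$ for $w=\zeta_0+\mathrm{i}\tau\in\Omega_+$, then $\sup_{\tau>0}\int_{\Gamma_\tau}|F(w)|^p\,|\mathrm{d}w|<\infty$. The natural strategy is to reduce the kernel bound to a classical maximal-function estimate. By Lemma~\ref{lem-170622-2050} we have the pointwise bound $|K_{\mathrm{i}\tau}(\zeta,\zeta_0)|\leqslant C\tau/(|\zeta-\zeta_0|^2+\tau^2)$, so
\[
|F(\zeta_0+\mathrm{i}\tau)|
 \leqslant C\int_\Gamma \frac{\tau}{|\zeta-\zeta_0|^2+\tau^2}\,|F(\zeta)|\,|\mathrm{d}\zeta|.
\]
The first step is to transfer this integral over $\Gamma$ to an integral over $\mathbb{R}$ via the parametrization $\zeta=\zeta(u)=u+\mathrm{i}a(u)$. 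Writing $\zeta_0=\zeta(u_0)$ and using $|u-u_0|\leqslant|\zeta-\zeta_0|$ together with $|\mathrm{d}\zeta|\leqslant\sqrt{1+M^2}\,\mathrm{d}u$, the right-hand side is dominated by
\[
C\sqrt{1+M^2}\int_{\mathbb{R}} \frac{\tau}{(u-u_0)^2+\tau^2}\,|F(\zeta(u))|\,\mathrm{d}u
= C'\bigl(P_\tau * g\bigr)(u_0),
\]
where $g(u):=|F(\zeta(u))|\in L^p(\mathbb{R},\mathrm{d}u)$ (this membership is exactly the identification of $L^p(\Gamma,\mathrm{d}s)$ with $L^p(\mathbb{R},\mathrm{d}u)$ recorded in Section~2) and $P_\tau$ is the Poisson kernel for $\mathbb{C}_+$.

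Next I would invoke the standard fact that the Poisson integral is controlled by the Hardy--Littlewood maximal function: $(P_\tau*g)(u_0)\leqslant C\,Mg(u_0)$ uniformly in $\tau>0$, where $Mg$ is the (non-centered or centered) maximal function of $g$. Hence $|F(\zeta_0+\mathrm{i}\tau)|\leqslant C''\,Mg(u_0)$ with $C''$ independent of $\tau$ and of $u_0$. Now integrate over $\Gamma_\tau$: using $|\mathrm{d}w|=|\mathrm{d}\zeta|\leqslant\sqrt{1+M^2}\,\mathrm{d}u_0$,
\[
\int_{\Gamma_\tau}|F(w)|^p\,|\mathrm{d}w|
 \leqslant \sqrt{1+M^2}\,(C'')^p\int_{\mathbb{R}} (Mg)(u_0)^p\,\mathrm{d}u_0
 \leqslant C_p\,\|g\|_{L^p(\mathbb{R})}^p,
\]
by the Hardy--Littlewood maximal theorem, valid since $p>1$. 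This bound is uniform in $\tau>0$, which is precisely the claim. One should also note in passing that $F(w)$ is holomorphic on $\Omega_+$ (differentiation under the integral sign, justified by the same kernel decay), though the corollary as stated only asserts the $H^p$-type bound.

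**Main obstacle.** The reduction is essentially routine once the right classical inputs are cited; the only point requiring care is the passage from the curve $\Gamma$ to the line $\mathbb{R}$. The clean inequality $|u-u_0|\leqslant|\zeta(u)-\zeta_0|$ lets me replace $|\zeta-\zeta_0|^2+\tau^2$ in the denominator by the smaller $(u-u_0)^2+\tau^2$, which only increases the integrand, so the estimate goes through in the favorable direction — but one must check that no constant blows up as $\tau\to0$ or as $u_0$ ranges over $\mathbb{R}$, and that is exactly why Lemma~\ref{lem-170622-2050} was stated with a constant $C$ depending only on $M$. A secondary subtlety is that the corollary's hypothesis $w=\zeta_0+\mathrm{i}\tau\in\Omega_+$ is automatic for $\tau>0$ (vertical translates of boundary points lie above the graph), so the formula for $F(w)$ is defined on all of $\Gamma_\tau$; this should be remarked so that the integral $\int_{\Gamma_\tau}|F(w)|^p\,|\mathrm{d}w|$ makes sense as written. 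Beyond that, the proof is a two-line application of Lemma~\ref{lem-170622-2050}, the Poisson-vs-maximal-function inequality, and the Hardy--Littlewood maximal theorem.
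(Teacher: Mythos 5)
Your proposal is correct, and the reduction to the line is the same as the paper's: both proofs start from Lemma~\ref{lem-170622-2050}, pass from $\Gamma$ to $\mathbb{R}$ via $|u-u_0|\leqslant|\zeta-\zeta_0|$ and $|\mathrm{d}\zeta|\leqslant\sqrt{1+M^2}\,\mathrm{d}u$, and arrive at the bound $|F(\zeta_0+\mathrm{i}\tau)|\lesssim (P_\tau * g)(u_0)$ with $g(u)=|F(\zeta(u))|\in L^p(\mathbb{R})$. The two arguments part ways only in how they estimate $\lVert P_\tau * g\rVert_{L^p}$: you invoke the pointwise domination $(P_\tau*g)(u_0)\leqslant C\,Mg(u_0)$ together with the Hardy--Littlewood maximal theorem, whereas the paper uses Minkowski's integral inequality (effectively Young's inequality with $\lVert P_\tau\rVert_{L^1}$ fixed), which is more elementary, gives the explicit constant $(1+M^2)^{\frac1{2p}+\frac12}C\pi\lVert F\rVert_{L^p(\Gamma)}$, and remains valid at $p=1$ --- which is exactly the remark the paper makes right after the corollary, and which your route forfeits since the maximal theorem fails at $p=1$ (and its constant $C_p$ blows up as $p\to1$). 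In exchange, your maximal-function bound is uniform in $\tau$ at the pointwise level, which is stronger information than needed here (it would control a vertical maximal function), so the trade-off is deeper machinery and loss of the endpoint versus a sharper pointwise statement; for the corollary as stated, both are perfectly adequate. Your side remarks (holomorphy of $F(w)$ by differentiation under the integral, and that $\zeta_0+\mathrm{i}\tau\in\Omega_+$ automatically for $\tau>0$) are correct and harmless.
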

\begin{proof}
  Fix $\tau>0$ in $w=\zeta_0+\mathrm{i}\tau$, 
  and let $\zeta_0=u_0+\mathrm{i}a(u_0)$, where $u_0\in\mathbb{R}$. 
  By Lemma~\ref{lem-170622-2050}, we have 
  \begin{align*}
    |F(w)|=|F(\zeta_0+\mathrm{i}\tau)|
    &\leqslant \int_{\Gamma} |F(\zeta)| \frac{C\tau|\mathrm{d}\zeta|}
        {|\zeta-\zeta_0|^2+\tau^2}                            \\
    &\leqslant \int_{\mathbb{R}} \big|F\big(u+\mathrm{i}a(u)\big)\big| 
        \frac{C\tau\sqrt{1+M^2}\,\mathrm{d}u}{|u-u_0|^2+\tau^2},
  \end{align*}
  then
  \begin{align*}
    &\int_{\Gamma_\tau} |F(w)|^p |\mathrm{d}w|               \\
    \leqslant{}& \int_{\mathbb{R}} 
        \bigg(\int_{\mathbb{R}} |F(u+\mathrm{i}a(u))| 
        \frac{C\tau\sqrt{1+M^2}\,\mathrm{d}u}{|u-u_0|^2+\tau^2}\bigg)^p
        \sqrt{1+M^2}\,\mathrm{d}u_0                            \\
    ={}& (1+M^2)^{\frac12+\frac{p}2}C^p \int_{\mathbb{R}} 
       \bigg(\int_{\mathbb{R}} \big|F\big(u+u_0+\mathrm{i}a(u+u_0)\big)\big| 
          \frac{\tau\,\mathrm{d}u}{u^2+\tau^2}\bigg)^p \mathrm{d}u_0.
  \end{align*}
  By Minkowski's inequality, 
  \begin{align*}
    &\bigg(\int_{\Gamma_\tau} |F(w)|^p |\mathrm{d}w|\bigg)^{\frac1p}  \\
    \leqslant{}& (1+M^2)^{\frac1{2p}+\frac12}C \int_{\mathbb{R}} 
        \bigg(\int_{\mathbb{R}} \big|F\big(u+u_0+\mathrm{i}a(u+u_0)\big)\big|^p 
           \mathrm{d}u_0 \bigg)^{\frac1p}
        \frac{\tau\,\mathrm{d}u}{u^2+\tau^2}                          \\
    ={}& (1+M^2)^{\frac1{2p}+\frac12}C \int_{\mathbb{R}} 
       \bigg(\int_{\mathbb{R}} \big|F\big(u_0+\mathrm{i}a(u_0)\big)\big|^p 
           \mathrm{d}u_0 \bigg)^{\frac1p}
       \frac{\mathrm{d}u}{u^2+1}                                    \\
    \leqslant{}& (1+M^2)^{\frac1{2p}+\frac12}C\pi  
       \bigg(\int_{\Gamma} |F(\zeta_0)|^p|\mathrm{d}\zeta_0|\bigg)^{\frac1p}\\
    ={}& (1+M^2)^{\frac1{2p}+\frac12}C\pi 
       \lVert F\rVert_{L^p(\Gamma,|\mathrm{d}\zeta|)},
  \end{align*}
  and this proves the corollary.
\end{proof}

The above corollary is obviously true if $p=1$.

\begin{lemma}\label{lem-170629-2230}
  Suppose $\zeta_0=\zeta(u_0)\in\Gamma$, $\phi\in(0,\frac\pi2)$ are both fixed, 
  and $\zeta'(u_0)$ exists, then we could choose positive constants 
  $C$ and $\delta$, depending on $\phi$ and $\zeta_0$, respectively, 
  such that if $z+\zeta\in\Omega_\phi(\zeta_0)\cap\Omega_+$ 
  and $|z|<\delta$, then
  \[|K_z(\zeta,\zeta_0)|
    \leqslant \frac{C|z|}{|\zeta-\zeta_0|^2+|z|^2}.\]
\end{lemma}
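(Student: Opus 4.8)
The plan is to use the closed form~\eqref{equ-170623-1150}, $K_z(\zeta,\zeta_0)=\frac1{\pi\mathrm{i}}\cdot\frac{z}{(\zeta-\zeta_0)^2-z^2}$, which reduces the assertion to the lower bound
\[
  \bigl|(\zeta-\zeta_0)^2-z^2\bigr|\ \geqslant\ c\,\bigl(|\zeta-\zeta_0|^2+|z|^2\bigr)
\]
for a suitable $c=c(\phi)>0$; then $C=1/(\pi c)$ works. Writing $A=(\zeta-\zeta_0)^2$ and $B=z^2$, the identity $|A-B|^2=|A|^2+|B|^2-2|A||B|\cos(\arg A-\arg B)$, together with $2|A||B|\leqslant|A|^2+|B|^2$ and $|A|^2+|B|^2\geqslant\frac12(|\zeta-\zeta_0|^2+|z|^2)^2$, shows that it suffices to keep $\arg A-\arg B$ at distance at least $\phi$ from $0$ modulo $2\pi$: in that case $\cos(\arg A-\arg B)\leqslant\cos\phi<1$, and one may take $c=\sqrt{(1-\cos\phi)/2}$.

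The separation on the $z$ side is immediate from the hypothesis: $\zeta_0+z\in\Omega_\phi(\zeta_0)$ means $\arg z-\phi_0\in(\phi,\pi-\phi)$, hence $\arg B-2\phi_0\in(2\phi,2\pi-2\phi)$ modulo $2\pi$. For the $\zeta$ side I would invoke the assumption that $\zeta'(u_0)$ exists: writing $\zeta=\zeta(u)$, the expansion $\zeta(u)-\zeta_0=(u-u_0)\bigl(\zeta'(u_0)+o(1)\bigr)$ as $u\to u_0$ gives $\arg(\zeta-\zeta_0)\to\phi_0$ when $u>u_0$ and $\arg(\zeta-\zeta_0)\to\phi_0+\pi$ when $u<u_0$, so in either case $\arg A=2\arg(\zeta-\zeta_0)\to2\phi_0$ modulo $2\pi$. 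Thus there is $\delta>0$, depending on $\zeta_0$ (through $\zeta'(u_0)$ and the rate of the $o(1)$) and on $\phi$, such that $|\zeta-\zeta_0|<2\delta$ (which forces $|u-u_0|<2\delta$, since $|\zeta-\zeta_0|\geqslant|u-u_0|$) already implies $|\arg A-2\phi_0|<\phi$ modulo $2\pi$; combined with the estimate for $\arg B$, this puts $\arg A-\arg B$ at distance at least $\phi$ from $0$ modulo $2\pi$, as needed.

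Finally I would dispose of the complementary range by an elementary estimate, mirroring the proof of Lemma~\ref{lem-170622-2050}: if $|\zeta-\zeta_0|\geqslant 2|z|$ then $|(\zeta-\zeta_0)^2-z^2|\geqslant|\zeta-\zeta_0|^2-|z|^2\geqslant\frac34|\zeta-\zeta_0|^2\geqslant\frac35\bigl(|\zeta-\zeta_0|^2+|z|^2\bigr)$, whereas if $|\zeta-\zeta_0|<2|z|$ then $|z|<\delta$ forces $|\zeta-\zeta_0|<2\delta$ and we are in the case treated above (the degenerate case $\zeta=\zeta_0$ is trivial, since then the left side is simply $|z|^2$). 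Taking $c=\min\{3/5,\sqrt{(1-\cos\phi)/2}\}$ then finishes the proof. The one genuinely delicate point is the choice of $\delta$: it must be small enough that the affine approximation of $\Gamma$ at $\zeta_0$ pins $\arg(\zeta-\zeta_0)$ to within $\phi/2$ of $\pm\phi_0$, and this is exactly where $\delta$ is forced to depend on the particular point $\zeta_0$; everything else is a routine case check.
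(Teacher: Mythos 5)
Your argument is correct and follows essentially the same route as the paper: both reduce the claim to the lower bound $|(\zeta-\zeta_0)^2-z^2|\geqslant c\,(|\zeta-\zeta_0|^2+|z|^2)$, obtain it near $\zeta_0$ from the angular separation between $\zeta-\zeta_0$ (pinned near the directions $\pm\zeta'(u_0)$ by differentiability at $u_0$, which is where $\delta$ enters) and $z$ (confined to the cone $\Omega_\phi(\zeta_0)$), and dispose of the far range by a crude modulus estimate. The only difference is cosmetic: the paper factors $(\zeta-\zeta_0)^2-z^2=(\zeta-\zeta_0+z)(\zeta-\zeta_0-z)$ and bounds each factor below using $\sin(\phi/2)$, whereas you apply the law of cosines directly to $(\zeta-\zeta_0)^2$ and $z^2$; both give a constant depending only on $\phi$.
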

\begin{proof}
  We only need to prove that there exists constants $C'$, $\delta>0$, 
  such that 
  \[|\zeta-\zeta_0|^2+|z|^2\leqslant C'|(\zeta-\zeta_0)^2-z^2|,\]
  if $z+\zeta\in\Omega_\phi(\zeta_0)\cap\Omega_+$ and $|z|<\delta$.
  Let $\phi_1=\frac12\phi$. Since
  \[\lim_{u\to u_0}\frac{\zeta(u)-\zeta(u_0)}{u-u_0}
    = \zeta'(u_0)= |\zeta'(u_0)|\mathrm{e}^{\mathrm{i}\phi_0},\]
  where $\phi_0\in(-\frac\pi2,\frac\pi2)$, then there exists $\delta>0$, 
  depending on $\zeta_0$, such that for all $|u-u_0|< 2\delta$, we have 
  \[\arg(\zeta-\zeta_0)-\phi_0
    \in (-\phi_1,\phi_1)\cup(\pi-\phi_1,\pi+\phi_1).\]
  In this case, we could let $z=|z|\mathrm{e}^{\mathrm{i}\theta}$, 
  where $\theta-\phi_0\in(\phi,\pi-\phi)$, then
  \[\theta-\arg(\zeta-\zeta_0)
    \in (-\pi+\phi_1,-\phi_1)\cup (\phi_1,\pi-\phi_1),\]
  and 
  \begin{align*}
  |\zeta-\zeta_0+z|
  &= \big||\zeta-\zeta_0|\mathrm{e}^{\mathrm{i}\arg(\zeta-\zeta_0)}
  - |z|\mathrm{e}^{\mathrm{i}\theta}\big|           \\
  &= \big||\zeta-\zeta_0|- |z|\mathrm{e}^{
    \mathrm{i}\theta- \mathrm{i}\arg(\zeta-\zeta_0)}\big|     \\
  &\geqslant |z|\cdot|\sin(\theta-\arg(\zeta-\zeta_0))|           \\
  &\geqslant |z|\sin\phi_1.
  \end{align*}
  We also have $|\zeta-\zeta_0-z|\geqslant |z|\sin\phi_1$ and 
  $|\zeta-\zeta_0\pm z|\geqslant |\zeta-\zeta_0|\sin\phi_1$
  by using the same method.
  
  If $|\zeta-\zeta_0|\leqslant |z|$, then
  $|\zeta-\zeta_0|^2+|z|^2\leqslant 2|z|^2$, and
  \[|(\zeta-\zeta_0)^2-z^2|
    =|\zeta-\zeta_0+z|\cdot |\zeta-\zeta_0-z|
    \geqslant |z|^2\sin^2\phi_1,\]
  thus
  \[|\zeta-\zeta_0|^2+|z|^2
    \leqslant \frac{2}{\sin^2\phi_1}|(\zeta-\zeta_0)^2-z^2|.\]
  If $|\zeta-\zeta_0|> |z|$, then
  $|\zeta-\zeta_0|^2+|z|^2\leqslant 2|\zeta-\zeta_0|^2$, and
  \[|(\zeta-\zeta_0)^2-z^2|\geqslant |\zeta-\zeta_0|^2\sin^2\phi_1,\] 
  we still have
  \[|\zeta-\zeta_0|^2+|z|
    \leqslant \frac{2}{\sin^2\phi_1}|(\zeta-\zeta_0)^2-z^2|.\]
    
  In the case of $|u-u_0|\geqslant 2\delta$, we know that 
  $|\zeta-\zeta_0|\geqslant |u-u_0|\geqslant 2\delta$.
  For $|z|<\delta\leqslant \frac12|\zeta-\zeta_0|$, we have 
  \[|\zeta-\zeta_0|^2+|z|^2\leqslant \frac54|\zeta-\zeta_0|^2,\]
  and
  \[|(\zeta-\zeta_0)^2-z^2|
    \geqslant |\zeta-\zeta_0|^2-|z|^2
    \geqslant \frac34|\zeta-\zeta_0|^2,\]
  thus
  \[|\zeta-\zeta_0|^2+|z|^2
    \leqslant \frac53|(\zeta-\zeta_0)^2-z^2|.\]
  
  In both cases, if we let $C'=\max\{\frac53,\frac2{\sin^2\phi_1}\}$, 
  then for all $|z|<\delta$, we have
  \[|\zeta-\zeta_0|^2+|z|^2
    \leqslant C'|(\zeta-\zeta_0)^2-z^2|,\]
  and this proves the lemma.
\end{proof}

The following corollary was proved in \cite{05}, but the proof here is new 
and simpler.

\begin{corollary}\label{cor-170622-2122}
  If $F(\zeta)\in L^p(\Gamma,|\mathrm{d}\zeta|)$, and $u_0$ is 
  the Lebesgue point of $F(u+\mathrm{i}a(u))$ such that 
  $\zeta'(u_0)= |\zeta'(u_0)|\mathrm{e}^{\mathrm{i}\phi_0}$ exists,
  where $\phi_0\in(-\frac\pi2,\frac\pi2)$, 
  then for any $\phi\in(0,\frac{\pi}2)$, we have
  \[\lim_{\substack{z+\zeta_0\in\Omega_\phi(\zeta_0)\cap\Omega_+,\\ 
        z\to 0}}
    \int_\Gamma K_z(\zeta,\zeta_0)F(\zeta)\,\mathrm{d}\zeta
    = F(\zeta_0).\]
\end{corollary}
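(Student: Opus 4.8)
The plan is to prove the non-tangential convergence $\int_\Gamma K_z(\zeta,\zeta_0)F(\zeta)\,\mathrm{d}\zeta\to F(\zeta_0)$ by a standard approximate-identity argument, using the kernel bound from Lemma~\ref{lem-170629-2230} together with the fact that $\int_\Gamma K_z(\zeta,\zeta_0)\,\mathrm{d}\zeta=1$ whenever $\zeta_0+z\in\Omega_+$ and $\zeta_0-z\in\Omega_-$ (which, as noted after the definition of $\Omega_\phi$, holds for $|z|$ small enough with $\zeta_0+z\in\Omega_\phi(\zeta_0)$). First I would fix $\phi\in(0,\frac\pi2)$ and invoke Lemma~\ref{lem-170629-2230} to obtain constants $C,\delta>0$ so that
\[
  |K_z(\zeta,\zeta_0)|\leqslant\frac{C|z|}{|\zeta-\zeta_0|^2+|z|^2}
  \qquad\text{whenever }z+\zeta_0\in\Omega_\phi(\zeta_0)\cap\Omega_+,\ |z|<\delta.
\]
Using the normalization $\int_\Gamma K_z(\zeta,\zeta_0)\,\mathrm{d}\zeta=1$, I would write
\[
  \int_\Gamma K_z(\zeta,\zeta_0)F(\zeta)\,\mathrm{d}\zeta - F(\zeta_0)
  = \int_\Gamma K_z(\zeta,\zeta_0)\bigl(F(\zeta)-F(\zeta_0)\bigr)\,\mathrm{d}\zeta,
\]
and then transfer everything to the real line via $\zeta=\zeta(u)=u+\mathrm{i}a(u)$, $\zeta_0=\zeta(u_0)$, bounding $|\zeta-\zeta_0|\geqslant|u-u_0|$ in the denominator and $|\mathrm{d}\zeta|\leqslant\sqrt{1+M^2}\,\mathrm{d}u$, so that the problem reduces to estimating $\int_{\mathbb{R}}\frac{|z|\,|g(u)-g(u_0)|}{(u-u_0)^2+|z|^2}\,\mathrm{d}u$, where $g(u)=F(u+\mathrm{i}a(u))\in L^p(\mathbb{R})$ and $u_0$ is a Lebesgue point of $g$.

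The core estimate is then the classical one: split the integral into $|u-u_0|<\rho$ and $|u-u_0|\geqslant\rho$ for a parameter $\rho\to 0$ chosen with $|z|\ll\rho$. On the near part, writing $G(r)=\int_{|u-u_0|<r}|g(u)-g(u_0)|\,\mathrm{d}u$, the Lebesgue-point hypothesis gives $G(r)=o(r)$ as $r\to0$; dyadically decomposing $\{|u-u_0|<\rho\}$ into annuli $\{2^{-k-1}\rho\leqslant|u-u_0|<2^{-k}\rho\}$ and bounding $\frac{|z|}{(u-u_0)^2+|z|^2}\leqslant\frac{|z|}{(2^{-k-1}\rho)^2}$ on the $k$-th annulus (for $k$ with $2^{-k}\rho\geqslant|z|$), plus handling the innermost ball $\{|u-u_0|<|z|\}$ separately where the kernel is $\leqslant|z|^{-1}$, shows the near contribution is $O\!\bigl(\sup_{r\leqslant\rho}G(r)/r\bigr)=o(1)$. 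On the far part, Hölder's inequality with exponents $p,q$ gives
\[
  \int_{|u-u_0|\geqslant\rho}\frac{|z|\,|g(u)-g(u_0)|}{(u-u_0)^2+|z|^2}\,\mathrm{d}u
  \leqslant |z|\Bigl(\|g\|_{L^p}+|g(u_0)|\cdot\rho^{?}\Bigr)
     \Bigl(\int_{|u-u_0|\geqslant\rho}\frac{\mathrm{d}u}{\bigl((u-u_0)^2+|z|^2\bigr)^{q}}\Bigr)^{1/q},
\]
and the last integral is $O(\rho^{1-2q})$ (finite since $q>1$), so the far part is $O(|z|\,\rho^{-1})\to0$ once $|z|/\rho\to0$. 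Letting first $|z|\to0$ and then $\rho\to0$ closes the argument; one must be slightly careful that $g(u_0)$ times the tail is also controlled, which I would handle by noting $\int_{|u-u_0|\geqslant\rho}\frac{|z|\,\mathrm{d}u}{(u-u_0)^2+|z|^2}\leqslant\int_{\mathbb{R}}\frac{|z|\,\mathrm{d}u}{u^2+|z|^2}=\pi$ is uniformly bounded, and splitting $F(\zeta)-F(\zeta_0)$ so that the constant $F(\zeta_0)$ pairs against $\int_\Gamma K_z=1$ exactly rather than against the crude bound.

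The main obstacle is purely bookkeeping rather than conceptual: because the kernel is not literally the Poisson kernel but only dominated by a Poisson-type kernel \emph{in terms of the parameter $|z|$} (not the imaginary part of $z$), and because the normalization $\int_\Gamma K_z\,\mathrm{d}\zeta=1$ requires $\zeta_0+z\in\Omega_+$ \emph{and} $\zeta_0-z\in\Omega_-$, I would need to confirm at the outset that shrinking $\delta$ guarantees both membership conditions along the non-tangential approach (this is exactly the ``easy to verify'' remark following the definition of $\Omega_\phi(\zeta_0)$ in Section~2, so it may be cited). With that in hand, the estimate is the textbook proof that dilates of the Poisson kernel form an approximate identity at Lebesgue points, transported through the bi-Lipschitz identification of $\Gamma$ with $\mathbb{R}$; I expect no real difficulty beyond tracking the constants' dependence on $\phi$ and $\zeta_0$, which Lemma~\ref{lem-170629-2230} already isolates.
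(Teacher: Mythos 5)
Your proposal is correct and follows essentially the same route as the paper: bound the kernel via Lemma~\ref{lem-170629-2230}, use the normalization $\int_\Gamma K_z(\zeta,\zeta_0)\,\mathrm{d}\zeta=1$, and transfer to $\mathbb{R}$ so that the error is dominated by the Poisson integral of $|F(\zeta(u+u_0))-F(\zeta(u_0))|$ at the Lebesgue point $u_0$. The only difference is that the paper then simply cites the standard fact that this Poisson integral tends to $0$ at a Lebesgue point, whereas you reprove that approximate-identity step by hand (dyadic decomposition near $u_0$, H\"older plus the $O(|z|/\rho)$ tail bound far away), which is fine.
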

\begin{proof}
  Fix $\phi\in(0,\frac\pi2)$. 
  If $z+\zeta_0\in\Omega_\phi(\zeta_0)\cap\Omega_+$, and 
  $|z|$ is sufficiently small, then $z-\zeta_0\in\Omega_-$ 
  and $\int_\Gamma K_z(\zeta,\zeta_0)\,\mathrm{d}\zeta=1$. 
  By Lemma~\ref{lem-170629-2230}, we have
  \begin{align*}
       &\left|\int_\Gamma K_z(\zeta,\zeta_0)F(\zeta)\,\mathrm{d}\zeta
         - F(\zeta_0)\right|                             \\
    ={}& \left|\int_\Gamma K_z(\zeta,\zeta_0) 
        \big(F(\zeta)- F(\zeta_0)\big)\,\mathrm{d}\zeta\right|   \\
    \leqslant{}& \int_\Gamma \frac{C|z|\cdot |F(\zeta)-F(\zeta_0)|}
        {|\zeta-\zeta_0|^2+|z|}|\mathrm{d}\zeta|       \\
    \leqslant{}& C\sqrt{1+M^2}\int_{\mathbb{R}} 
        \big|F\big(\zeta(u)\big)-F\big(\zeta(u_0)\big)\big|
        \frac{|z|\,\mathrm{d}u}{|u-u_0|^2+|z|^2}          \\
    ={}& \pi C\sqrt{1+M^2} \int_{\mathbb{R}} 
        \big|F\big(\zeta(u)\big)-F\big(\zeta(u_0)\big)\big|
        P_{|z|}(u-u_0)\,\mathrm{d}u                            \\
    ={}& \pi C\sqrt{1+M^2} \int_{\mathbb{R}} 
        \big|F\big(\zeta(u+u_0)\big)-F\big(\zeta(u_0)\big)\big|
        P_{|z|}(u)\,\mathrm{d}u,
  \end{align*}
  where $C>0$ is constant, depending on $\phi$, and 
  $P_y(x)=\frac1\pi\cdot\frac{y}{x^2+y^2}$ is the Poisson kernel 
  on $\mathbb{C}_+$. Since $u_0$ is the Lebesgue point of $F(\zeta(u))$, 
  we have~\cite{02}
  \[\lim_{|z|\to 0} \int_{\mathbb{R}} 
       \big|F\big(\zeta(u+u_0)\big)-F\big(\zeta(u_0)\big)\big|
       P_{|z|}(u)\,\mathrm{d}u =0,\]
  thus $\int_\Gamma K_z(\zeta,\zeta_0)F(\zeta)\,\mathrm{d}\zeta
  \to F(\zeta_0)$ as $z\to 0$.
\end{proof}

The above corollary shows that, for $w=\zeta_0+z\in\Omega_+$ 
where $\zeta_0\in\Gamma$ and $z\in\mathrm{C}$, if we define 
$G(w)= G(\zeta_0+z)
  =\int_\Gamma K_z(\zeta,\zeta_0)F(\zeta)\,\mathrm{d}\zeta$, 
then $G(w)$ has non-tangential boundary limit $F(\zeta_0)$ at $\zeta_0$,
although $G(w)$ maybe only defined in $\Omega_\phi(\zeta_0)$ 
and near $\zeta_0$.
\begin{corollary}\label{cor-170706-2100}
  If $F(w)\in H^p(\Omega_+)$ and $F(\zeta)$ is the upright down 
  boundary limit of $F(w)$ on $\Gamma$, then $F(\zeta)$ is also
  the non-tangential boundary limit of $F(w)$.
\end{corollary}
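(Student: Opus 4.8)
The plan is to deduce the corollary directly from the Cauchy integral representation quoted from~\cite{01} together with Corollary~\ref{cor-170622-2122}. First I would recall the two facts already available for $F\in H^p(\Omega_+)$: its upright down boundary limit $F(\zeta)$ exists a.e.\ on $\Gamma$ and lies in $L^p(\Gamma,|\mathrm{d}\zeta|)$, and $F$ is the Cauchy integral of $F(\zeta)$ on $\Omega_+$ while the same Cauchy integral vanishes on $\Omega_-$; symbolically,
\[F(w)=\frac1{2\pi\mathrm{i}}\int_\Gamma\frac{F(\zeta)}{\zeta-w}\,\mathrm{d}\zeta\ \ (w\in\Omega_+),\qquad 0=\frac1{2\pi\mathrm{i}}\int_\Gamma\frac{F(\zeta)}{\zeta-w}\,\mathrm{d}\zeta\ \ (w\in\Omega_-).\]

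Next, fix $\phi\in(0,\frac\pi2)$ and a point $\zeta_0=\zeta(u_0)\in\Gamma$ that is simultaneously a Lebesgue point of $u\mapsto F(\zeta(u))$ and a point at which $\zeta'(u_0)=|\zeta'(u_0)|\mathrm{e}^{\mathrm{i}\phi_0}$ exists; since $a$ is Lipschitz and Lebesgue points have full measure, a.e.\ $\zeta_0\in\Gamma$ qualifies. For $z$ with $|z|$ small enough and $\zeta_0+z\in\Omega_\phi(\zeta_0)$ we have $\zeta_0+z\in\Omega_+$ and $\zeta_0-z\in\Omega_-$ (the observation recorded in Section~2), so subtracting the vanishing integral at $w=\zeta_0-z$ from the Cauchy representation at $w=\zeta_0+z$ yields
\[F(\zeta_0+z)=\frac1{2\pi\mathrm{i}}\int_\Gamma\Bigl(\frac1{\zeta-(\zeta_0+z)}-\frac1{\zeta-(\zeta_0-z)}\Bigr)F(\zeta)\,\mathrm{d}\zeta=\int_\Gamma K_z(\zeta,\zeta_0)F(\zeta)\,\mathrm{d}\zeta.\]

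Finally, I would apply Corollary~\ref{cor-170622-2122} to the last integral: as $z\to0$ with $\zeta_0+z\in\Omega_\phi(\zeta_0)\cap\Omega_+$, it converges to $F(\zeta_0)$, hence so does $F(\zeta_0+z)$. Since $\phi$ was arbitrary, $F(\zeta_0)$ is the non-tangential boundary limit of $F$ at $\zeta_0$, and this holds for a.e.\ $\zeta_0\in\Gamma$. There is no serious obstacle here: the analytic content has already been isolated in Corollary~\ref{cor-170622-2122} and in the representation cited from~\cite{01}. The only point needing a moment's attention is the bookkeeping of exceptional sets — that the null set of non-Lebesgue points, the null set where $\zeta'$ fails to exist, and the null set on which the upright down limit fails may all be discarded at once — together with checking that the threshold $\delta$ (depending on $\phi$ and $\zeta_0$) used to justify the subtraction step is compatible with the one furnished by Lemma~\ref{lem-170629-2230}.
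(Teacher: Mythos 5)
Your proposal is correct and follows essentially the same route as the paper: both subtract the vanishing Cauchy integral at $\zeta_0-z\in\Omega_-$ from the Cauchy representation at $\zeta_0+z\in\Omega_+$ to write $F(\zeta_0+z)=\int_\Gamma K_z(\zeta,\zeta_0)F(\zeta)\,\mathrm{d}\zeta$ and then invoke Corollary~\ref{cor-170622-2122} at Lebesgue points where $\zeta'(u_0)$ exists. Your remarks on discarding the exceptional null sets are a harmless elaboration of what the paper leaves implicit.
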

\begin{proof}
  Let $u_0$ be the Lebesgue point of $F(\zeta(u))$ and 
  $\zeta'(u_0)$ exists, $\zeta_0=\zeta(u_0)$ 
  and $\phi\in(0,\frac\pi2)$ fixed. Since there exists $\delta>0$ 
  such that if $z+\zeta_0\in\Omega_\phi(\zeta_0)\cap\Omega_+$ 
  and $|z|<\delta$, then $\zeta_0-z\in\Omega_-$, we have 
  \[F(z+\zeta_0)
    =\frac1{2\pi\mathrm{i}} \int_{\Gamma} 
       \frac{F(\zeta)\,\mathrm{d}\zeta}{\zeta-(\zeta_0+z)},\quad
    0= \frac1{2\pi\mathrm{i}} \int_{\Gamma} 
       \frac{F(\zeta)\,\mathrm{d}\zeta}{\zeta-(\zeta_0-z)},\]
  and then
  \begin{align*}
    F(z+\zeta_0)
    &= \frac1{2\pi\mathrm{i}} \int_{\Gamma} F(\zeta)\bigg(
          \frac{1}{\zeta-\zeta_0-z}- \frac{1}{\zeta-\zeta_0+z}
       \bigg)\,\mathrm{d}\zeta                         \\
    &= \int_\Gamma K_z(\zeta,\zeta_0)F(\zeta)\,\mathrm{d}\zeta.
  \end{align*}
  Corollary~\ref{cor-170622-2122} shows that
  \[\lim_{z\to 0} F(z+\zeta_0)= F(\zeta_0),\]
  which means that $F(w)$ tends to $F(\zeta)$ non-tangentially.
\end{proof}

Thus, every function in $H^p(\Omega_+)$ is the Cauchy integral 
of its non-tangential boundary limit.

\begin{lemma}\label{lem-170622-2200}
  If $F(w)$ is a rational function, 
  vanishing at infinity, whose poles do not lie in $\overline{\Omega_+}$,
  then $F(w)\in H^p(\Omega_+)$.
\end{lemma}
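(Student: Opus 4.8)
The plan is to reduce the claim to the two estimates that control $|F(w)|^p$ on the curves $\Gamma_\tau$ uniformly in $\tau>0$: an estimate near the poles (which are at a positive distance from $\overline{\Omega_+}$) and an estimate at infinity (where $F$ decays). Write $F$ in partial fractions,
\[F(w)=\sum_{k=1}^{N}\sum_{j=1}^{m_k}\frac{c_{k,j}}{(w-w_k)^{j}},\]
where each pole $w_k\notin\overline{\Omega_+}$, so $w_k\in\Omega_-$ or $w_k$ is exterior to the closure; since $F$ vanishes at infinity there is no polynomial part. Because $H^p(\Omega_+)$ is a vector space (stated in the excerpt) and the norm is subadditive, it suffices to treat a single term $F(w)=(w-w_0)^{-j}$ with $w_0\notin\overline{\Omega_+}$ and $j\ge 1$.

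First I would fix $\tau>0$ and parametrise $\Gamma_\tau$ by $w=u+\mathrm{i}a(u)+\mathrm{i}\tau$, $u\in\mathbb{R}$, so that $\mathrm{d}s\le\sqrt{1+M^2}\,\mathrm{d}u$. The key geometric fact is that there is a constant $d_0>0$, depending only on $w_0$ and $M$, with
\[|w-w_0|\ \ge\ d_0\bigl(1+|u-u_1|\bigr)\qquad\text{for all }w\in\Gamma_\tau,\ \tau>0,\]
where $u_1=\operatorname{Re}w_0$. Indeed, since $w_0\notin\overline{\Omega_+}$, for $\tau$ bounded away from $0$ the point $w_0$ stays at distance $\gtrsim\tau$ (or $\gtrsim 1$) below $\Gamma_\tau$, and for large $|u-u_1|$ the horizontal separation alone forces $|w-w_0|\gtrsim|u-u_1|$; the Lipschitz bound on $a$ keeps the vertical displacement under control. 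I would verify this by splitting into $|u-u_1|\le R$ and $|u-u_1|>R$ for a suitable $R$ and using that $\operatorname{dist}(w_0,\overline{\Omega_+})>0$. Granting this, for any $j\ge 1$,
\[\int_{\Gamma_\tau}|F(w)|^p\,\mathrm{d}s
  \ \le\ \sqrt{1+M^2}\int_{\mathbb{R}}\frac{\mathrm{d}u}{|w-w_0|^{jp}}
  \ \le\ \frac{\sqrt{1+M^2}}{d_0^{\,jp}}\int_{\mathbb{R}}\frac{\mathrm{d}u}{(1+|u-u_1|)^{jp}},\]
and the last integral is finite and independent of $\tau$ because $jp\ge p>1$. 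Taking the supremum over $\tau>0$ and then the $p$-th root gives $\|F\|_{H^p(\Omega_+)}<\infty$ for the single term, and summing over the finitely many partial-fraction terms finishes the proof. Holomorphy of $F$ on $\Omega_+$ is immediate since all poles lie outside $\overline{\Omega_+}$.

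The main obstacle is the uniform lower bound $|w-w_0|\gtrsim 1+|u-u_1|$ on $\Gamma_\tau$: one must check it does not degenerate as $\tau\to 0^+$ (handled by $\operatorname{dist}(w_0,\Gamma)>0$ when $w_0\in\Omega_-$, or by $w_0$ lying strictly below the relevant region) nor as $\tau\to\infty$ (then $\Gamma_\tau$ moves away from $w_0$, so the bound only improves), and that the Lipschitz graph structure prevents $\Gamma_\tau$ from approaching $w_0$ along a vertical ray. Everything else is the elementary observation that $(1+|u|)^{-jp}$ is integrable precisely because $p>1$ — which is exactly the place where the hypothesis $1<p<\infty$ enters, and also why one cannot expect the naive argument to give membership for a single simple pole when $p=1$ without the extra cancellation coming from a full rational function that vanishes at infinity to order $\ge 2$.
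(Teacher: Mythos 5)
Your proposal is correct and follows essentially the same route as the paper: partial fractions, then for each term a uniform-in-$\tau$ bound on $\Gamma_\tau$ obtained from the positive distance of the pole to $\overline{\Omega_+}$ (controlled via the Lipschitz bound) near $u_1$ and from horizontal separation far away, with convergence exactly because $jp\geqslant p>1$. The only cosmetic differences are that you package the two regimes into the single inequality $|w-w_0|\geqslant d_0(1+|u-u_1|)$ and treat poles of order $j\geqslant 1$ directly, whereas the paper splits at $|u-\alpha_1|=d/M$ and reduces higher-order poles to the simple-pole case via $\frac{1}{w-\alpha}\in H^{pk}(\Omega_+)$.
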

\begin{proof}  
  We consider a simple case first. 
  Let $F(w)=\frac{1}{w-\alpha}$ with 
  $\alpha=\alpha_1+\mathrm{i}\alpha_2\in\Omega_-$ 
  where $\alpha_1$, $\alpha_2\in\mathbb{R}$,
  then $\alpha_1+\mathrm{i}a(\alpha_1)\in\Gamma$, 
  and $d=|\alpha_2-a(\alpha_1)|>0$. Let $w=u+\mathrm{i}v\in\Gamma_\tau$, 
  where $u$, $v\in\mathbb{R}$. Since the slopes of $\Gamma$ are between 
  $-M$ and $M$, we have 
  \[\int_{\Gamma_\tau} |F(w)|^p |\mathrm{d}w|
    \leqslant \sqrt{1+M^2} \int_\mathbb{R} \frac{\mathrm{d}u}
        {\big((u-\alpha_1)^2+(v-\alpha_2)^2\big)^{\frac{p}{2}}},\]
  and if $|u-\alpha_1|\leqslant \frac{d}{M}$, then 
  \[|w-\alpha|\geqslant \frac{d}{\sqrt{1+M^2}},\]
  
  Since
  \begin{align*}
    &\int_{|u-\alpha_1| \leqslant\frac{d}{M}} \frac{\,\mathrm{d} u}
        {\big((u-\alpha_1)^2+(v-\alpha_2)^2\big)^{\frac{p}{2}}}   \\
    \leqslant{} &\int_{|u-\alpha_1| \leqslant\frac{d}{M}} 
           \Big(\frac{d}{\sqrt{1+M^2}}\Big)^{-p}\,\mathrm{d} u  \\
    ={} &2d^{1-p}M^{-1}(1+M^2)^{\frac{p}{2}},
  \end{align*}
  and
  \begin{align*}
    &\int_{|u-\alpha_1|>\frac{d}{M}} \frac{\,\mathrm{d} u}
       {\big((u-\alpha_1)^2+(v-\alpha_2)^2\big)^{\frac{p}{2}}}    \\
    \leqslant{} &\int_{|u-\alpha_1| >\frac{d}{M}} 
     \frac{\,\mathrm{d} u}{|u-\alpha_1|^p}                  \\
    ={} &\frac2{p-1}d^{1-p}M^{p-1},
  \end{align*}
  we get that
  \[\int_{\Gamma_\tau} |F(w)|^p |\mathrm{d}w|
    \leqslant d^{1-p}M^{-1}\Big(2(1+M^2)^{\frac{p}{2}}
        + \frac2{p-1}M^p\Big),\]
  which shows that $F(w)\in H^p(\Omega_+)$.
  
  If $F(w)=\frac1{(w-\alpha)^k}$ where $k$ is an integer greater than $1$, 
  then we also have $F(w)\in H^p(\Omega_+)$ since 
  $\frac1{w-\alpha}\in H^{pk}(\Omega_+)$.
  
  For the general case, we could write $F(w)$ as
  \[\sum_{j=1}^{N_1}\sum_{k=1}^{N_2} \frac{c_{jk}}{(w-\alpha_j)^k}.\]
  then, by what we have proved, 
  $(\int_{\Gamma_\tau}|F(w)|^p\,\mathrm{d}s)^{\frac1p}$ is bounded above
  by a constant which is independent of $\tau$. Thus, we still have 
  $F(w)\in H^p(\Omega_+)$.
\end{proof}

Kenig proved the following lemma in \cite{07}.
\begin{lemma}\label{lem-170706-2130}
  {\rm (i)} $\Phi$ extends to $\overline{\mathbb{C}_+}$ as a homeomorphism 
    onto $\overline{\Omega_+}$.
    
  {\rm (ii)} $\Phi'$ has a non-tangential limit almost everywhere, and this limit
  is different from $0$ almost everywhere.
  
  {\rm (iii)} $\Phi$ preserves angles at almost every boundary point, 
  and so does $\Psi$ (here almost every refers to $|\mathrm{d}\zeta|$).
  
  {\rm (iv)} $\Phi(x)$, $x\in\mathbb{R}$, is absolutely continuous 
  when restricted to any finite interval, and hence $\Phi'(x)$ exists 
  almost everywhere, and is locally integrable, moreover, 
  for almost every $x\in\mathbb{R}$, $\Phi'(x) = \lim_{z\to x} \Phi'(z)$,
  where $z\in\mathbb{C}_+$ converges nontangentially to $x$.
  
  {\rm (v)} Sets of measure $0$ on $\mathbb{R}$ correspond to sets 
  of measure $0$ (with respect to $|\mathrm{d}\zeta|$) on $\Gamma$, 
  and vice versa.
  
  {\rm (vi)} At every point where $\Phi'(x)$ exists and is different from $0$, 
  it is a vector tangent to the curve $\Gamma$ at the point $\Phi(x)$, 
  and hence $|\arg\Phi'(x)|\leqslant \arctan M$ for almost every $x$.
\end{lemma}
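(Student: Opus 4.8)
The plan is to treat $\Omega_+$ as a Jordan domain on the Riemann sphere whose boundary $\Gamma\cup\{\infty\}$ is a locally rectifiable Jordan curve, and to deduce (i)--(vi) from the classical boundary theory of conformal maps, localizing near finite boundary points to accommodate the unboundedness of $\Gamma$. Throughout I would transfer statements between $\mathbb{C}_+$ and the unit disk $\mathbb{D}$ by a M\"obius transformation carrying the point at infinity to a single boundary point, so that on every bounded arc the standard theorems for bounded Jordan domains apply. For (i) itself, since $\Gamma$ is the graph of the continuous function $a$, the set $\Gamma\cup\{\infty\}$ is a Jordan curve on $\widehat{\mathbb{C}}$ and $\Omega_+$ is a Jordan domain; Carath\'eodory's extension theorem then gives that $\Phi$ extends to a homeomorphism of $\overline{\mathbb{C}_+}$ onto $\overline{\Omega_+}$.

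Next I would establish the Hardy-space input that drives (ii), (iv), (v) and (vi). Because $a$ is Lipschitz, $\Gamma$ is locally rectifiable, and the length of the image under $\Phi$ of a curve approaching the boundary stays locally bounded; by the classical equivalence between rectifiability of the boundary and membership of the derivative in $H^1$, one obtains that $\Phi'$ lies in $H^1$ locally, i.e.\ on each bounded portion of the boundary. From Fatou's theorem I then get that $\Phi'$ has non-tangential boundary limits almost everywhere, and since $\Phi'\not\equiv 0$ its boundary values are nonzero almost everywhere (a not-identically-zero $H^1$ function has $\log|\Phi'|$ locally integrable on the boundary, hence nonvanishing boundary values a.e.); this is (ii). The $H^1$ theorem on integration of boundary values yields $\Phi(x_2)-\Phi(x_1)=\int_{x_1}^{x_2}\Phi'(t)\,\mathrm{d}t$, so $\Phi(x)$ is locally absolutely continuous, $\Phi'$ is locally integrable, and the derivative of the boundary parametrization coincides with the non-tangential limit of $\Phi'$, which is (iv). For (vi), differentiating $\Phi(x)=u+\mathrm{i}a(u)$ with $u=\mathrm{Re}\,\Phi(x)$, and using the given fact $\mathrm{Re}\,\Phi'(x)>0$, shows $\Phi'(x)$ is a positive multiple of the tangent vector $1+\mathrm{i}a'(u)$ to $\Gamma$ at $\Phi(x)$; since $|a'|\leqslant M$ this gives $|\arg\Phi'(x)|\leqslant\arctan M$ almost everywhere. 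Part (v) is then soft: the arc-length push-forward of Lebesgue measure on $\mathbb{R}$ under $\Phi$ is $|\Phi'(x)|\,\mathrm{d}x$, so a null set on $\mathbb{R}$ maps to an arc-length null set by local integrability, while any $E$ whose image has arc length zero satisfies $\int_E|\Phi'|=0$, forcing $|E|=0$ because $|\Phi'|>0$ almost everywhere by (ii).

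For (iii) I would invoke the boundary conformality theorem: at every boundary point where $\Phi$ is continuous and $\Phi'$ has a finite nonzero non-tangential limit, $\Phi$ is isogonal, i.e.\ it preserves angles. By (i) and (ii) these hypotheses hold almost everywhere, so $\Phi$ preserves angles at almost every boundary point. Applying the same statement to $\Psi$, whose derivative equals $1/\Phi'$ and is therefore also finite and nonzero almost everywhere (transferring the exceptional null set from $\mathbb{R}$ to $\Gamma$ through (v)), yields the corresponding assertion for $\Psi$.

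I expect the main obstacle to be the two genuinely analytic inputs rather than the bookkeeping. The first is establishing $\Phi'\in H^1$ locally from local rectifiability, i.e.\ the length--derivative identity together with a limiting argument as the approximating curves tend to the boundary, which must be handled carefully because $\Gamma$ passes through $\infty$. The second is the boundary conformality theorem used for (iii), whose proof rests on Lindel\"of's theorem relating the angular limit of $\Phi'$ to the existence of the angular derivative. Once these are in place, parts (ii), (iv), (v) and (vi) follow from the standard $H^1$ boundary-value machinery and the elementary geometry of the Lipschitz graph.
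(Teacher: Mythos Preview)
The paper does not actually prove this lemma: it is stated with the attribution ``Kenig proved the following lemma in~\cite{07}'' and no argument is given. So there is no proof in the paper to compare your proposal against.

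That said, your outline is correct and is essentially the route Kenig takes in the cited reference: pass to a bounded Jordan domain via a M\"obius map, use Carath\'eodory for the homeomorphic extension, invoke the F.~and~M.~Riesz circle of ideas (local rectifiability $\Leftrightarrow$ $\Phi'\in H^1$ locally) to obtain the a.e.\ existence and nonvanishing of the non-tangential limit of $\Phi'$, and then read off absolute continuity, mutual absolute continuity of the boundary measures, the tangent-vector interpretation of $\Phi'(x)$, and boundary conformality via Lindel\"of. Your identification of the two analytic inputs that carry the weight---the local $H^1$ membership of $\Phi'$ and the angular-derivative/Lindel\"of theorem---is accurate; the rest is indeed bookkeeping. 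If anything, you could be slightly more explicit that the localization near $\infty$ is handled by the single M\"obius change of variable you already mention, so that all the classical theorems for the disk apply verbatim on every bounded boundary arc.
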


The following lemma shows that, for each $y>0$, 
the curves $\Phi(x+\mathrm{i}y)$, $x\in\mathbb{R}$, 
are graphs of Lipschitz functions whose slopes are between $-M$ and $M$. 
We let $\theta_0= \arctan M$ for simplicity.

\begin{lemma}\label{lem-170522-1205}
  For the conformal representation $\Phi\colon \mathbb{C}_+\to\Omega_+$,
  we have
  	\[\mathrm{Re}\,\Phi'(z)>0, \quad  
      \text{and } 
      \lvert\mathrm{Im}\,\Phi'(z)\rvert\leqslant M\mathrm{Re}\,\Phi'(z),\]
  for all $z\in\mathbb{C}_+$.
\end{lemma}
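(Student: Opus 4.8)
The goal is to show that $\Phi'(z)$ lies in the sector $\Sigma$ for every $z\in\mathbb{C}_+$; since $\Phi$ is conformal, $\Phi'$ is holomorphic and nowhere zero, so only its argument is at issue. The geometric fact I would exploit is a cone property of $\Omega_+$: because $|a'|\le M$ and $\tan\theta_0=M$, for every $w_0=u_0+\mathrm{i}v_0\in\Omega_+$, every $t\ge 0$ and every $\theta\in[\theta_0,\pi-\theta_0]$,
\[\mathrm{Im}\bigl(w_0+t\mathrm{e}^{\mathrm{i}\theta}\bigr)-a\bigl(\mathrm{Re}(w_0+t\mathrm{e}^{\mathrm{i}\theta})\bigr)\ \ge\ v_0-a(u_0)+t\bigl(\sin\theta-M|\cos\theta|\bigr)\ \ge\ v_0-a(u_0)\ >\ 0,\]
so $w_0+t\mathrm{e}^{\mathrm{i}\theta}\in\Omega_+$. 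Equivalently, putting $\nu_\beta=\mathrm{e}^{\mathrm{i}(\pi/2+\beta)}$, we have $\Omega_++t\nu_\beta\subseteq\Omega_+$ for all $t\ge 0$ and all $\beta\in[\theta_0-\pi/2,\ \pi/2-\theta_0]$.

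Fix such a $\beta$ and, for $t\ge 0$, consider $\Lambda_t:=\Psi\circ(\,\cdot\,+t\nu_\beta)\circ\Phi$. By the cone property $\Phi(\mathbb{C}_+)+t\nu_\beta=\Omega_++t\nu_\beta\subseteq\Omega_+=\Phi(\mathbb{C}_+)$, so $\Lambda_t$ is a holomorphic self-map of $\mathbb{C}_+$ with $\Lambda_0=\mathrm{id}$, and a telescoping identity (using $\Phi\circ\Psi=\mathrm{id}$) gives $\Lambda_t\circ\Lambda_s=\Lambda_{t+s}$; in particular $\Lambda_t^{\,n}=\Psi\circ(\,\cdot\,+nt\nu_\beta)\circ\Phi$, which has no fixed point in $\mathbb{C}_+$ when $t>0$. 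Since $\nu_\beta$ points strictly into the upper half-plane, $\Phi(z)+nt\nu_\beta\to\infty$, and since $\Phi$ (hence $\Psi$) extends to a homeomorphism of the closures fixing $\infty$ (Lemma~\ref{lem-170706-2130}(i), together with the fact that $\Phi$ carries $\mathbb{R}$ onto $\Gamma$), the orbit $\Lambda_t^{\,n}(z)$ converges to $\infty$. Thus $\infty$ is the Denjoy–Wolff point of the semigroup $(\Lambda_t)$, so its infinitesimal generator
\[G(z):=\frac{\mathrm{d}}{\mathrm{d}t}\Big|_{t=0}\Lambda_t(z)=\Psi'\bigl(\Phi(z)\bigr)\,\nu_\beta=\frac{\nu_\beta}{\Phi'(z)}\]
maps $\mathbb{C}_+$ into the closed upper half-plane, i.e.\ $\mathrm{Im}\,G(z)\ge 0$ for all $z\in\mathbb{C}_+$ (Berkson–Porta; equivalently, Wolff's lemma gives $\mathrm{Im}\,\Lambda_t(z)\ge\mathrm{Im}\,z$ for $t\ge 0$, and differentiating at $t=0$ yields $\mathrm{Im}\,G(z)\ge 0$).

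A short computation rewrites $\mathrm{Im}\,G(z)\ge 0$ as $\cos\beta\,\mathrm{Re}\,\Phi'(z)+\sin\beta\,\mathrm{Im}\,\Phi'(z)\ge 0$, valid for every $z\in\mathbb{C}_+$ and every $\beta\in[\theta_0-\pi/2,\ \pi/2-\theta_0]$. Taking $\beta=0$ gives $\mathrm{Re}\,\Phi'(z)\ge 0$; taking $\beta=\pm(\pi/2-\theta_0)$ gives $\mp\,\mathrm{Im}\,\Phi'(z)\le(\tan\theta_0)\,\mathrm{Re}\,\Phi'(z)=M\,\mathrm{Re}\,\Phi'(z)$, hence $|\mathrm{Im}\,\Phi'(z)|\le M\,\mathrm{Re}\,\Phi'(z)$; and then $\mathrm{Re}\,\Phi'(z)>0$ strictly, since $\mathrm{Re}\,\Phi'(z_0)=0$ would force $\mathrm{Im}\,\Phi'(z_0)=0$, i.e.\ $\Phi'(z_0)=0$, contradicting conformality. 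The routine parts are the cone estimate and the passage to the infinitesimal inequality; the one genuinely non-elementary ingredient is the behaviour of a holomorphic self-map of $\mathbb{C}_+$ near its Denjoy–Wolff point — Wolff's lemma, equivalently the Berkson–Porta description of semigroup generators — and I expect the care to lie in verifying that the Denjoy–Wolff point of $(\Lambda_t)$ really is $\infty$ and in recording this lemma in precisely the form needed. (An alternative route, staying within Hardy-space techniques, would be to show that a suitable power of $\Phi'$ lies in the Smirnov class $N^+(\mathbb{C}_+)$ and then use the Herglotz–Poisson representation to propagate the boundary bound $|\arg\Phi'(x)|\le\theta_0$ of Lemma~\ref{lem-170706-2130}(vi) into the interior, but that Smirnov-class verification does not look any easier.)
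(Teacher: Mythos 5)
Your argument is correct, but it follows a genuinely different route from the paper. You exploit the translation-cone property of $\Omega_+$ (that $\Omega_++t\mathrm{e}^{\mathrm{i}\theta}\subseteq\Omega_+$ for $\theta\in[\theta_0,\pi-\theta_0]$, which your estimate with $|a'|\leqslant M$ does establish, including at the endpoint angles), build the semigroup $\Lambda_t=\Psi\circ(\cdot+t\nu_\beta)\circ\Phi$ of self-maps of $\mathbb{C}_+$, identify $\infty$ as its Denjoy--Wolff point via the orbit $\Psi(\Phi(z)+nt\nu_\beta)\to\infty$ (here Kenig's Lemma~\ref{lem-170706-2130}(i) suffices, since a homeomorphism between the closed unbounded sets is proper, so $\Psi$ sends sequences tending to $\infty$ to sequences tending to $\infty$), and then convert Wolff's horodisk inequality $\mathrm{Im}\,\Lambda_t(z)\geqslant\mathrm{Im}\,z$, by differentiation at $t=0$, into the family of inequalities $\cos\beta\,\mathrm{Re}\,\Phi'+\sin\beta\,\mathrm{Im}\,\Phi'\geqslant 0$, which at $\beta=0,\pm(\pi/2-\theta_0)$ gives exactly the sector condition; your pointwise strictness argument ($\mathrm{Re}\,\Phi'(z_0)=0$ forces $\Phi'(z_0)=0$) is fine and even simpler than needed. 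The paper instead proves the lemma by classical conformal-mapping tools: for polygonal boundaries it uses the Schwarz--Christoffel formula to see that $\mathrm{Im}\log\Phi'$ is a bounded harmonic function with boundary values in $[-\theta_0,\theta_0]$, propagates the bound inside via the Poisson representation, and then passes to a general Lipschitz graph by approximating $a$ from above by piecewise affine functions and invoking the Carath\'eodory convergence theorem, finishing with a harmonic minimum-principle argument for strict positivity. Your approach buys you a proof with no approximation scheme and no Schwarz--Christoffel computation, at the cost of importing iteration/semigroup theory (Denjoy--Wolff, Wolff's lemma or Berkson--Porta) and of leaning on Kenig's boundary homeomorphism to locate the Wolff point; the paper's proof is longer but stays within elementary Poisson-representation and kernel-convergence arguments, which is in keeping with the toolkit the rest of the paper uses.
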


\begin{remark}
  Although the proof of this lemma was inspired by \cite{01}, we made 
  some small but important adjustments and the proof of the lemma, 
  which contains much more details, is complete.
\end{remark}

\begin{proof}
	We start with the special case where $\Gamma$ is a polygonal line 
	ending with two half-lines whose slopes are between 
  $-M$ and $M$, and let $N$ be the number of vertices of $\Gamma$. Then by 
  Schwarz-Christoffel formula~\cite{04}, there exists $N$ real numbers 
  $c_1<c_2<\cdots<c_N$, a real number~$\gamma$, 
  and $N$ real exponents $\gamma_j$, $1\leqslant j\leqslant N$, such that 
  \[\Phi'(z)
    = \mathrm{e}^{\mathrm{i}\gamma} \prod_{j=1}^{N}(z-c_j)^{\gamma_j}.\]
  The ``angles'' $\gamma$ and $\gamma_j$ are related to the slopes of 
  the polygonal line $\Gamma$, and we choose a branch of $\log\Phi'(z)$ 
  such that, $\arg(z-c_j)\in(0,\pi)$ for $z\in\mathbb{C}_+$, 
  and if $x<c_1$, then 
  \[\mathrm{Im}\log\Phi'(x)= \gamma+\pi(\gamma_1+\cdots+\gamma_N);\] 
  if $c_j<x<c_{j+1}$, $j=1$, $\ldots$, $N-1$, then 
  \[\mathrm{Im}\log\Phi'(x)= \gamma+\pi(\gamma_{j+1}+\cdots+\gamma_N);\] 
  if $x>c_N$, then 
  $\mathrm{Im}\log\Phi'(x)= \gamma$.
  Besides, in each case, 
  $\lvert \mathrm{Im}\log\Phi'(x)\rvert\leqslant \tan^{-1}M= \theta_0$.
  
  Since
  \[\log\Phi'(z)= \mathrm{i}\gamma+\sum_{j=1}^{N}\gamma_j\log(z-c_j),\]
  we have
  \[\mathrm{Im}\log\Phi'(z)
    = \gamma+\sum_{j=1}^{N}\gamma_j\mathrm{Im\,}\log(z-c_j),\]
  then
  \[\lvert \mathrm{Im}\log\Phi'(z)\rvert
    \leqslant \lvert\gamma\rvert+\pi\sum_{j=1}^N\lvert\gamma_j\rvert.\]
    
  The above inequality shows that the harmonic function 
  $\mathrm{Im}\log\Phi'(z)$ is bounded on $\mathbb{C}_+$. 
  By the Poisson representation of bounded harmonic function 
  on $\mathbb{C}_+$~\cite{02},
  \[\mathrm{Im}\log\Phi'(z)
    = \frac{1}{\pi}\int_{-\infty}^{+\infty} 
        \frac{y\mathrm{Im}\log\Phi'(t)}{(t-x)^2+y^2}\,\mathrm{d}t,\quad
    \text{for }z=x+\mathrm{i}y\in\mathbb{C}_+,\]
  then
  \begin{align*}
    \lvert \mathrm{Im}\log\Phi'(z)\rvert
    &\leqslant \frac{1}{\pi}\int_{-\infty}^{+\infty} 
       \frac{y\lvert\mathrm{Im}\log\Phi'(t)\rvert\,\mathrm{d}t}{(t-x)^2+y^2}   \\
    &\leqslant \frac{\theta_0}{\pi} \int_{-\infty}^{+\infty} 
       \frac{y\,\mathrm{d}t}{(t-x)^2+y^2}
     = \theta_0,
  \end{align*}
  and this proves that $|\arg\Phi'(z)|\leqslant \theta_0$ and 
  $\lvert\mathrm{Im}\,\Phi'(z)\rvert\leqslant 
    M\lvert\mathrm{Re}\,\Phi'(z)\rvert$. Since 
  \[\mathrm{Re\,}\Phi'(z)= |\Phi'(z)|\cos\arg\Phi'(z),\]
  we have $\mathrm{Re\,}\Phi'(z)>0$.

  We then pass to the general case by approximating $\Gamma$ by 
  a sequence $\Gamma_j$ of polygonal lines such that the open sets $\Omega_{j+}$ 
  above $\Gamma_j$ increase to $\Omega_+$, and it is enough that $\Gamma_j$ are 
  graphs of piecewise affine function $a_j(x)$ which decrease to $a(x)$.

  To construct the $a_j(x)$, we consider the subdivision of $\mathbb{R}$ 
  formed by the points $x=k\cdot 2^{-j}$, where $k\in\mathbb{Z}$ and 
  $\lvert k\rvert\leqslant j\cdot 2^j$. Let
  $y(k,j)= a(k\cdot 2^{-j})+2M\cdot 2^{-j}$, and denote $\Gamma_j$ as the 
  polygonal line whose nodes are $(k\cdot 2^{-j}, y(k,j))$, 
  and whose ends are formed by half-lines of slope $-M$ and $M$, respectively.
  That is,

  If $x\leqslant -j$, then $a_j(x)= a(-j)+2M\cdot 2^{-j}-M(x+j)$;

  If $k\cdot 2^{-j}< x\leqslant (k+1)\cdot 2^{-j}$, 
  where $-j\cdot 2^j\leqslant k< j\cdot 2^j$, then 
  \[a_j(x)= 2^j\big(y(k+1,j)-y(k,j)\big)(x-k\cdot 2^{-j})+y(k,j);\]

  If $x>j$, then $a_j(x)= a(j)+2M\cdot 2^{-j}+M(x-j)$.

  Let $K\subset\mathbb{R}$ be a compact set and choose $j$ large enough 
  such that $K\subset[-j,j]$, then for $x\in[k\cdot 2^{-j}, (k+1)\cdot 2^{-j}]$,
  \begin{align*}
  \lvert a_j(x)-a(x)\rvert
  &\leqslant a(k\cdot 2^{-j})+2M\cdot 2^{-j}+M(x-k\cdot 2^{-j})     \\
    &\phantom{\leqslant{}}- \big(a(k\cdot 2^{-j})-M(x-k\cdot 2^{-j})\big)       \\
  &= 2M\cdot 2^{-j}+2M(x-k\cdot 2^{-j})          \\
  &\leqslant 4M\cdot 2^{-j}.
  \end{align*}
  It follows that the functions $a_j(x)$ form a sequence which decreases, 
  uniformly on each compact set of $\mathbb{R}$ to $a(x)$, and we could choose 
  a subsequence of $\{\Omega_{j+}\}$, which is denoted as $\{\Omega_{j+}\}$ again, 
  such that, 
  \[\Omega_{1+}\subset\Omega_{2+}\subset\cdots\subset\Omega_+, \text{ and } 
    \Omega_+= \bigcup_{j=1}^{+\infty} \Omega_{j+}.\]

  Let $\Phi_j(z)$ be the conformal representation from $\mathbb{C}_+$ onto 
  $\Omega_{j+}$, where $\Phi_j(\infty)= \infty$, 
  $\Phi_j(\mathrm{i})=w_0\in\Omega_{1+}$, 
  then by Carath\'eodory convergence theorem, 
  $\Phi_j(z)$ converge uniformly on each compact set of 
  $\mathbb{C}_+$ to the conformal representation 
  $\Phi\colon \mathbb{C}_+\to\Omega_+$. Since 
  $\lvert \mathrm{Im\,}\log\Phi_j'(z)\rvert\leqslant\theta_0$, 
  for $z\in\mathbb{C}_+$, we have 
  $\lvert \mathrm{Im\,}\log\Phi'(z)\rvert\leqslant\theta_0$, that is 
  $\lvert\mathrm{Im}\,\Phi'(z)\rvert\leqslant 
      M\lvert\mathrm{Re}\,\Psi'(z)\rvert$. Besides, 
  $\mathrm{Re\,}\Phi'(z)\geqslant 0$ as $\mathrm{Re\,}\Phi_j'(z)\geqslant 0$.
  But, as a harmonic function, if $\mathrm{Re\,}\Phi'(z_0)=0$ 
  for some $z_0\in\mathrm{C}_+$, we should have $\mathrm{Re\,}\Phi'(z)=0$
  for all $z\in\mathrm{C}_+$. Then $\mathrm{Im\,}\Phi'(z)=0$ 
  and $\Phi'(z)=0$, which is impossible.
  
  Thus, we have proved that $\mathrm{Re}\,\Phi'(z)>0$ and 
  $\lvert\mathrm{Im}\,\Phi'(z)\rvert\leqslant M\mathrm{Re}\,\Phi'(z)$,
  for all $z\in\mathbb{C}_+$.
\end{proof}

\section{Proof of the Main Theorem}
We divide the proof of the main theorem into three parts, 
and deal with the ``onto'' part first.

\begin{lemma}\label{lem-170522-1245}
  Suppose that $1<q<\infty$, $\alpha\in\mathbb{C}$, $\varepsilon>0$, 
  $E(\alpha,\varepsilon)=\{z\in\mathbb{C}_+\colon
      |\Phi(z)-\alpha|\geqslant \varepsilon\}$.
  Let $E_y=\{t\in\mathbb{R}\colon t+\mathrm{i}y\in E(\alpha,\varepsilon)\}$ 
  for $y>0$, then
  \[I= \int_{E_y} \frac{\lvert\Phi'(t+\mathrm{i}y)\rvert\,\mathrm{d}t}
         {\lvert\Phi(t+\mathrm{i}y)-\alpha\rvert^q}
     \leqslant \frac{2^{q+1}\sqrt{1+M^2}}{(q-1)\varepsilon^{q-1}}.\]

  As a consequence, if $\alpha\notin\overline{\Omega_+}$, and we define
  \[g(z)=\frac{\big(\Phi'(z)\big)^{\frac1q}}{\Phi(z)-\alpha},\quad
    \text{for } z\in\mathbb{C}_+,\]
  then $g\in H^q(\mathbb{C}_+)$.
\end{lemma}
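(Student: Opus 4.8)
The plan is to estimate the integral $I$ directly by slicing the region $E_y$ according to the size of $|\Phi(t+\mathrm{i}y)-\alpha|$, using the key fact from Lemma~\ref{lem-170522-1205} that each curve $\{\Phi(t+\mathrm{i}y)\colon t\in\mathbb{R}\}$ is the graph of a Lipschitz function with slopes in $[-M,M]$. Fix $y>0$ and write $w(t)=\Phi(t+\mathrm{i}y)$, so that $w(t)$ traces a Lipschitz graph and $|w'(t)|=|\Phi'(t+\mathrm{i}y)|$. The substitution $s=s(t)$ given by arc length along this curve turns the integral into $\int |w-\alpha|^{-q}\,\mathrm{d}s$ over the portion of the curve lying outside the disk $|w-\alpha|<\varepsilon$; since $\mathrm{d}s = |w'(t)|\,\mathrm{d}t$ this is exactly $I$.

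Next I would decompose the curve outside $B(\alpha,\varepsilon)$ into the dyadic annuli $A_k=\{w\colon 2^k\varepsilon\le |w-\alpha|<2^{k+1}\varepsilon\}$, $k=0,1,2,\dots$. On $A_k$ the integrand is at most $(2^k\varepsilon)^{-q}$, so it remains to bound the arc length of the part of the Lipschitz graph lying in $A_k$. Because the curve has slope bounded by $M$, its intersection with the annulus $A_k$ projects onto a subset of the real axis of length at most $2\cdot 2^{k+1}\varepsilon$ (the horizontal extent of $A_k$), and arc length is at most $\sqrt{1+M^2}$ times horizontal length; hence the arc length in $A_k$ is at most $2^{k+2}\sqrt{1+M^2}\,\varepsilon$. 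Therefore
\[
  I \le \sum_{k=0}^{\infty} (2^k\varepsilon)^{-q}\cdot 2^{k+2}\sqrt{1+M^2}\,\varepsilon
    = 4\sqrt{1+M^2}\,\varepsilon^{1-q}\sum_{k=0}^{\infty} 2^{k(1-q)}
    = \frac{4\sqrt{1+M^2}}{1-2^{1-q}}\,\varepsilon^{1-q}.
\]
A small arithmetic comparison (using $q>1$, so $1-2^{1-q}\ge (q-1)(\ln 2)/2$ or a cruder bound) gives the stated constant $2^{q+1}\sqrt{1+M^2}/((q-1)\varepsilon^{q-1})$; matching the constant exactly is a routine but slightly fiddly estimate, and I expect that to be the only mildly delicate point — the geometric heart of the argument is the Lipschitz-graph slicing.

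For the consequence, suppose $\alpha\notin\overline{\Omega_+}$, so $\varepsilon_0=\mathrm{dist}(\alpha,\overline{\Omega_+})>0$. Then for every $z\in\mathbb{C}_+$ we have $\Phi(z)\in\Omega_+$, hence $|\Phi(z)-\alpha|\ge\varepsilon_0$, so $E(\alpha,\varepsilon_0)=\mathbb{C}_+$ and $E_y=\mathbb{R}$ for all $y>0$. The function $g(z)=(\Phi'(z))^{1/q}/(\Phi(z)-\alpha)$ is holomorphic on $\mathbb{C}_+$ (a branch of $(\Phi'(z))^{1/q}$ is well-defined since $\mathrm{Re}\,\Phi'>0$ by Lemma~\ref{lem-170522-1205}, so $\Phi'$ avoids a ray and has a holomorphic $q$-th root), and
\[
  \int_{\mathbb{R}} |g(x+\mathrm{i}y)|^q\,\mathrm{d}x
    = \int_{\mathbb{R}} \frac{|\Phi'(x+\mathrm{i}y)|\,\mathrm{d}x}{|\Phi(x+\mathrm{i}y)-\alpha|^q}
    \le \frac{2^{q+1}\sqrt{1+M^2}}{(q-1)\,\varepsilon_0^{\,q-1}},
\]
by the first part applied with $\varepsilon=\varepsilon_0$. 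The right-hand side is independent of $y$, so $\sup_{y>0} m(g,y)^q<\infty$, i.e. $g\in H^q(\mathbb{C}_+)$.
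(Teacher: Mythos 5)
Your argument is sound in substance but takes a genuinely different route from the paper's, and as written it does not deliver the constant asserted in the lemma. The paper exploits the same geometric input (Lemma~\ref{lem-170522-1205}) analytically rather than dyadically: from $\lvert\mathrm{Im}\,\Phi'\rvert\leqslant M\,\mathrm{Re}\,\Phi'$ one gets the pointwise bound $\lvert\Phi'(t+\mathrm{i}y)\rvert\leqslant\sqrt{1+M^2}\,\mathrm{Re}\,\Phi'(t+\mathrm{i}y)$, and on $E_y$ one has $\lvert\Phi(t+\mathrm{i}y)-\alpha\rvert\geqslant\frac12\bigl(\lvert\mathrm{Re}\,\Phi(t+\mathrm{i}y)-\mathrm{Re}\,\alpha\rvert+\varepsilon\bigr)$; since $\mathrm{Re}\,\Phi(t+\mathrm{i}y)$ is strictly increasing in $t$, the monotone substitution $s=\mathrm{Re}\,\Phi(t+\mathrm{i}y)$ gives $I\leqslant 2^{q}\sqrt{1+M^2}\int_{\mathbb{R}}(\lvert s\rvert+\varepsilon)^{-q}\,\mathrm{d}s=\frac{2^{q+1}\sqrt{1+M^2}}{(q-1)\varepsilon^{q-1}}$ exactly. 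Your dyadic slicing of the image curve (a Lipschitz graph, so its arc length in each annulus is at most $\sqrt{1+M^2}$ times the horizontal extent of the annulus) is a valid alternative, but it yields the constant $\frac{4\sqrt{1+M^2}}{(1-2^{1-q})\varepsilon^{q-1}}$.

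The one genuine defect is your closing claim that passing from this to the stated constant is routine: it is impossible. Indeed $\frac{4}{1-2^{1-q}}\leqslant\frac{2^{q+1}}{q-1}$ is equivalent to $2q\leqslant 2^{q}$, which fails for every $q\in(1,2)$ (e.g.\ $q=3/2$), so on that range your bound is strictly larger than the lemma's, by a factor approaching $1/\ln 2\approx 1.44$ as $q\to 1^{+}$. Moreover the auxiliary inequality you propose, $1-2^{1-q}\geqslant(q-1)(\ln 2)/2$, fails for large $q$ (the left side tends to $1$), and even where it holds it produces $8/\bigl((q-1)\ln 2\bigr)$, again larger than $2^{q+1}/(q-1)$ near $q=1$. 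None of this harms the way the lemma is used later (only a bound of the form $C(q,M)\,\varepsilon^{1-q}$ independent of $y$ is needed, and your constant is actually better than the paper's for $q>2$), but to prove the lemma with its stated constant you should replace the dyadic step by the monotone-substitution computation above. Your treatment of the consequence, including the observation that $\mathrm{Re}\,\Phi'>0$ guarantees a holomorphic branch of $(\Phi')^{1/q}$ and that $E_y=\mathbb{R}$ when $\alpha\notin\overline{\Omega_+}$, is correct and agrees with the paper.
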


\begin{proof}
  Since $\lvert\Phi(z)-\alpha\rvert\geqslant \varepsilon$ 
  for $z\in E(\alpha,\varepsilon)$, then 
  \[\lvert \Phi(z)-\alpha)\rvert
    \geqslant \frac12\big(\lvert \mathrm{Re}\,\Phi(z)
          - \mathrm{Re}\,\alpha\rvert+ \varepsilon\big).\]
  By Lemma~\ref{lem-170522-1205}, we have $\mathrm{Re}\,\Phi'(z)>0$, 
  that is, $\mathrm{Re}\,\Phi(t+\mathrm{i}y)$ 
  is an increasing function of $t$, then
  \begin{align*}
    I
    &= \int_{E_y} \frac{\lvert\Phi'(t+\mathrm{i}y)\rvert\,\mathrm{d}t}
           {\lvert\Phi(t+\mathrm{i}y)-\alpha\rvert^q}          \\
    &\leqslant \int_{E_y} \frac{
          \sqrt{1+M^2}\,\mathrm{d}\,\mathrm{Re}\,\Phi(t+\mathrm{i}y)}
        {2^{-q}\big(\lvert \mathrm{Re}\,\Phi(t+\mathrm{i}y)
          -\mathrm{Re}\,\alpha\rvert+ \varepsilon\big)^q}         \\
    &\leqslant \int_{\mathbb{R}}\frac{\sqrt{1+M^2}\,\mathrm{d}t}
        {2^{-q}(\lvert t\rvert+\varepsilon)^q}                   \\ 
    &= \frac{2^{q+1}\sqrt{1+M^2}}{(q-1)\varepsilon^{q-1}}.
  \end{align*}

  If $\alpha\notin\overline{\Omega_+}$, there exists $\varepsilon>0$, 
  such that $\lvert\Phi(z)-\alpha\rvert\geqslant \varepsilon$ 
  for all $z\in\mathbb{C}_+$. Then $E_y=\mathbb{R}$, and
  \[\int_{\mathbb{R}} \lvert g(t+\mathrm{i}y)\rvert^q \,\mathrm{d}t
    = \int_{\mathbb{R}} \frac{\lvert\Phi'(t+\mathrm{i}y)\rvert\,\mathrm{d}t}
           {\lvert\Phi(t+\mathrm{i}y)-\alpha\rvert^q}
    \leqslant \frac{2^{q+1}\sqrt{1+M^2}}{(q-1)\varepsilon^{q-1}},\]
  thus $g\in H^q(\mathbb{C}_+)$ as the boundary above 
  is independent of $y$, and the lemma is proved.
\end{proof}

To show that $T(H^p(\Omega_+))$ 
contains $H^p(\mathbb{C}_+)$, we need the following two lemmas.

\begin{lemma}\label{lem-170522-1424}
  If $F(w)\in H^p(\Omega_+)$ and $G(w)\in H^q(\Omega_+)$, where 
  $1<p<\infty$ and $\frac1p+\frac1q=1$, $F(\zeta)$ and $G(\zeta)$
  are the nontangential boundary limit function of $F(w)$ and $G(w)$,
  respectively, then
  \[\int_\Gamma F(\zeta)G(\zeta)\,\mathrm{d}\zeta=0.\]
\end{lemma}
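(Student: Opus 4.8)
The plan is to prove $\int_\Gamma F(\zeta)G(\zeta)\,\mathrm{d}\zeta = 0$ by a contour-shifting / approximation argument: move the contour from $\Gamma$ up to $\Gamma_\tau$, show the integral over $\Gamma_\tau$ equals the integral over $\Gamma$ (by Cauchy's theorem, since $F(w)G(w)$ is holomorphic in the strip between $\Gamma$ and $\Gamma_\tau$ and decays appropriately), and then let $\tau\to\infty$ to conclude the integral vanishes. The point is that $FG$ is an $H^1$-type function on $\Omega_+$ (by H\"older: $\int_{\Gamma_\tau}|FG|\,\mathrm{d}s \le \|F\|_{L^p(\Gamma_\tau)}\|G\|_{L^q(\Gamma_\tau)} \le \|F\|_{H^p(\Omega_+)}\|G\|_{H^q(\Omega_+)}$), so the translated integrals are uniformly controlled.

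First I would fix $\tau > 0$ and justify that $\int_{\Gamma_\tau} F(w)G(w)\,\mathrm{d}w = \int_\Gamma F(\zeta)G(\zeta)\,\mathrm{d}\zeta$. For this, parametrize both curves by $u\in\mathbb{R}$ (so $\Gamma_\tau$ is $u + \mathrm{i}(a(u)+\tau)$), and for $R > 0$ consider the boundary of the region between $\Gamma$ and $\Gamma_\tau$ truncated to $|u|\le R$. Cauchy's theorem gives that the integral of $FG$ over this closed contour is $0$; the two horizontal-ish pieces are the pieces of $\Gamma$ and $\Gamma_\tau$, and the two vertical side-segments at $u=\pm R$ must be shown to tend to $0$ along a suitable sequence $R_n\to\infty$. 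The latter is a standard Fatou-type argument: since $\int_{\mathbb{R}}\big(\int_0^\tau |F(u+\mathrm{i}(a(u)+s))G(u+\mathrm{i}(a(u)+s))|\,\mathrm{d}s\big)\mathrm{d}u < \infty$ (by Fubini and the uniform $L^1$ bound above, using that on each $\Gamma_s$ the integrand is controlled), the inner integral is finite for a.e.\ $u$ and tends to $0$ along a sequence $u = R_n\to\infty$ and another $\to-\infty$; along such a sequence the side-segment contributions vanish. One subtlety: $F$ and $G$ are a priori only defined on $\Omega_+$, so I should work with $\Gamma_\tau$ and $\Gamma_{\tau'}$ for $0 < \tau' < \tau$ first (both strictly inside $\Omega_+$), get equality of those two integrals, and then send $\tau'\to 0^+$ using the non-tangential (or vertical) boundary convergence $F(w)\to F(\zeta)$, $G(w)\to G(\zeta)$ together with dominated convergence via the $L^p$/$L^q$ control — or invoke the Cauchy integral representation already recorded in the excerpt to pass to the boundary cleanly.

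Next I would let $\tau\to\infty$ in $\int_{\Gamma_\tau}F(w)G(w)\,\mathrm{d}w$ and show this tends to $0$. Here I use that $F\in H^p(\Omega_+)$ forces the boundary trace to be $L^p$ and, more importantly, that translating far up should wash the mass out to infinity. Concretely, using the Cauchy integral representation $F(w) = \frac{1}{2\pi\mathrm{i}}\int_\Gamma \frac{F(\zeta)}{\zeta - w}\,\mathrm{d}\zeta$ from the excerpt, for $w = \zeta_0 + \mathrm{i}\tau$ one gets a Poisson-type bound $|F(\zeta_0 + \mathrm{i}\tau)| \lesssim \int_\Gamma \frac{\tau}{|\zeta-\zeta_0|^2+\tau^2}|F(\zeta)|\,|\mathrm{d}\zeta|$ (Lemma~\ref{lem-170622-2050}), and similarly for $G$ as a $q$-power estimate; combining via H\"older and the standard fact that $\|P_\tau * h\|_{L^p} \to 0$ as $\tau\to\infty$ for $h\in L^p$ — or rather that the relevant product integral is $O(\tau^{-1})$ or $o(1)$ — yields $\int_{\Gamma_\tau}|FG|\,\mathrm{d}s \to 0$.

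The main obstacle I expect is the contour-shift justification at the two ends $u = \pm\infty$: making rigorous that the vertical side-segments vanish along a sequence, and handling the passage $\tau' \to 0^+$ down to the actual boundary $\Gamma$ where $F$, $G$ are only defined as non-tangential limits. The cleanest route is probably to avoid truncation entirely: write both $F$ and $G$ via their Cauchy integrals on $\Gamma$, substitute into $\int_{\Gamma_\tau} FG$, and compute the resulting double integral directly — the kernel $\frac{1}{(\zeta-w)(\xi-w)}$ integrated over $w\in\Gamma_\tau$ can be evaluated by residues (closing in $\Omega_+$), and since both $\zeta,\xi$ lie on $\Gamma$ below $\Gamma_\tau$ there are no poles enclosed, giving $0$ directly. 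That reduces the whole lemma to a Fubini justification, which the $L^p$–$L^q$ H\"older bound supplies.
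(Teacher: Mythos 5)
The paper does not actually write out a proof of this lemma (it defers to Meyer--Coifman \cite{01}), and your primary route --- Cauchy's theorem between $\Gamma_{\tau'}$ and $\Gamma_{\tau}$ with the vertical sides killed along a sequence $R_n\to\infty$ via the Fubini/Fatou trick, then $\tau'\to0^+$ and $\tau\to\infty$ --- is exactly the standard argument that reference uses, so in substance you are on the intended path. Two steps need tightening. At $\tau'\to0^+$ you appeal to ``dominated convergence via the $L^p/L^q$ control'', but the uniform bound $\sup_{\tau}\|F(\cdot+\mathrm{i}\tau)\|_{L^p(\Gamma)}<\infty$ is not a dominating function; the fix is the Poisson-type majorization already available in the paper: from the Cauchy representation and Lemma~\ref{lem-170622-2050}, $|F(\zeta(u)+\mathrm{i}\tau')|\leqslant C\,(P_{\tau'}*|f|)(u)\leqslant C'\,\mathcal{M}f(u)$ with $f(u)=F(\zeta(u))$ and $\mathcal{M}$ the Hardy--Littlewood maximal operator (similarly for $G$), so $\mathcal{M}f\cdot\mathcal{M}g\in L^1(\mathbb{R})$ dominates and a.e.\ vertical convergence finishes the passage to $\Gamma$ (alternatively, prove $L^p$-norm convergence of the vertical translates). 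The same Poisson bound makes your $\tau\to\infty$ step correct, since $\|P_\tau*|f|\|_{L^p}\to0$ and $\|P_\tau*|g|\|_{L^q}\to0$ as $\tau\to\infty$ precisely because $p,q>1$.

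By contrast, the shortcut you call the ``cleanest route'' has a genuine gap: substituting the Cauchy integrals of \emph{both} $F$ and $G$ into $\int_{\Gamma_\tau}FG\,\mathrm{d}w$ and interchanging the order of integration is not ``a Fubini justification which the $L^p$--$L^q$ H\"older bound supplies''. The interchange requires absolute convergence of the triple integral, i.e.\ finiteness of $\iint_{\Gamma\times\Gamma}|F(\zeta)||G(\xi)|K_\tau(\zeta,\xi)\,|\mathrm{d}\zeta||\mathrm{d}\xi|$ with $K_\tau(\zeta,\xi)=\int_{\Gamma_\tau}|\zeta-w|^{-1}|\xi-w|^{-1}|\mathrm{d}w|$. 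Since $|\zeta-w|\geqslant c_M(|x-t|+\tau)$ for $\zeta=\zeta(x)\in\Gamma$, $w\in\Gamma_\tau$ over $t$, one has $K_\tau(\zeta,\xi)\approx\dfrac{\log\bigl(2+|x-y|/\tau\bigr)}{|x-y|+\tau}$, a kernel that is not integrable in $x-y$; pairing an $L^p$ function against an $L^q$ function through it can diverge (already with the kernel $(|x-y|+1)^{-1}$, take $|f(x)|\approx x^{-1/p}(\log x)^{-2/p}$ and $|g(y)|\approx y^{-1/q}(\log y)^{-2/q}$ for large $x,y$: the region $x<y<2x$ contributes $\int^\infty \mathrm{d}x/(x\log x)=\infty$, and such moduli are attainable by outer functions). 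The residue evaluation $\int_{\Gamma_\tau}\frac{\mathrm{d}w}{(\zeta-w)(\xi-w)}=0$ is fine; it is the unrestricted Fubini that fails, and restoring it forces exactly the truncation/limiting apparatus your main route already contains. So keep the contour-shift argument as the proof and drop, or substantially repair, the double-substitution alternative.
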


The proof of the above lemma is nearly the same as in \cite{01}, 
so we omit it here.

\begin{lemma}\label{lem-170522-1300}
  If $F(\zeta)\in L^p(\Gamma,\lvert\mathrm{d}\zeta\rvert)$, 
  then $F(\zeta)$ is the non-tangential boundary limit of a function in $H^p(\Omega_+)$ 
  if and only if
  \[\int_{\Gamma}\frac{F(\zeta)}{\zeta-\alpha}\,\mathrm{d}\zeta=0, \quad
    \text{for all } \alpha\notin\overline{\Omega_+}.\]
\end{lemma}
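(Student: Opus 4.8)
The plan is to prove both directions. The \emph{necessity} direction is almost immediate: if $F(\zeta)$ is the non-tangential boundary limit of some $F(w)\in H^p(\Omega_+)$, fix $\alpha\notin\overline{\Omega_+}$ and apply Lemma~\ref{lem-170522-1424} with $G(w)=\dfrac{1}{w-\alpha}$. By Lemma~\ref{lem-170622-2200}, $G(w)\in H^q(\Omega_+)$ (it is a rational function vanishing at infinity with its only pole $\alpha$ outside $\overline{\Omega_+}$), and its non-tangential boundary limit on $\Gamma$ is $G(\zeta)=\dfrac{1}{\zeta-\alpha}$. Hence $\int_\Gamma \dfrac{F(\zeta)}{\zeta-\alpha}\,\mathrm{d}\zeta=\int_\Gamma F(\zeta)G(\zeta)\,\mathrm{d}\zeta=0$.

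For the \emph{sufficiency} direction, assume $F(\zeta)\in L^p(\Gamma,|\mathrm{d}\zeta|)$ satisfies the vanishing-moment condition for every $\alpha\notin\overline{\Omega_+}$. Define the candidate function on $\Omega_+$ by the Cauchy-type integral
\[
  F(w)=\int_\Gamma K_{\mathrm{i}\tau}(\zeta,\zeta_0)F(\zeta)\,\mathrm{d}\zeta,
  \qquad w=\zeta_0+\mathrm{i}\tau\in\Omega_+,\ \zeta_0\in\Gamma,\ \tau>0.
\]
Using $\int_\Gamma\frac{F(\zeta)}{\zeta-(\zeta_0-\mathrm{i}\tau)}\,\mathrm{d}\zeta=0$ (valid since $\zeta_0-\mathrm{i}\tau\in\Omega_-$, so that point lies outside $\overline{\Omega_+}$), the kernel identity shows this integral equals $\frac{1}{2\pi\mathrm{i}}\int_\Gamma\frac{F(\zeta)}{\zeta-w}\,\mathrm{d}\zeta$, the genuine Cauchy integral, which is holomorphic in $\Omega_+$. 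Corollary~\ref{cor-170622-2210} gives $\sup_{\tau>0}\int_{\Gamma_\tau}|F(w)|^p\,|\mathrm{d}w|<\infty$, so $F\in H^p(\Omega_+)$. Finally, Corollary~\ref{cor-170622-2122} shows that at every Lebesgue point $u_0$ of $F(\zeta(u))$ where $\zeta'(u_0)$ exists — i.e.\ almost everywhere — the non-tangential limit of $F(w)$ at $\zeta_0=\zeta(u_0)$ equals $F(\zeta_0)$. Hence $F(\zeta)$ is the non-tangential boundary limit of $F(w)\in H^p(\Omega_+)$.

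The one point needing care is the reduction of the Cauchy-type kernel integral to the actual Cauchy integral: one must justify that subtracting the integral against $\frac{1}{\zeta-(\zeta_0-\mathrm{i}\tau)}$ (which vanishes by hypothesis, applied at $\alpha=\zeta_0-\mathrm{i}\tau\in\Omega_-\subset\mathbb{C}\setminus\overline{\Omega_+}$) is legitimate, and that both pieces are absolutely convergent so the identity $K_{\mathrm{i}\tau}(\zeta,\zeta_0)=\frac{1}{2\pi\mathrm{i}}\big(\frac{1}{\zeta-(\zeta_0+\mathrm{i}\tau)}-\frac{1}{\zeta-(\zeta_0-\mathrm{i}\tau)}\big)$ transfers to the integrals. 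Absolute convergence of $\int_\Gamma\frac{F(\zeta)}{\zeta-(\zeta_0\pm\mathrm{i}\tau)}\,|\mathrm{d}\zeta|$ follows from Hölder's inequality, since $\frac{1}{\zeta-(\zeta_0\pm\mathrm{i}\tau)}\in L^q(\Gamma)$ for each fixed $\tau>0$ (the denominator is bounded below by $\tau$ near $\zeta_0$ and decays like $|\zeta|^{-1}$, and $q>1$). This is the main — though modest — obstacle; everything else is a direct invocation of the lemmas and corollaries already established.
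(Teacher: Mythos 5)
Your proposal is correct and follows essentially the same route as the paper: necessity via Lemma~\ref{lem-170622-2200} together with Lemma~\ref{lem-170522-1424}, and sufficiency by forming the Cauchy integral, using the vanishing condition at $\alpha=\zeta_0-\mathrm{i}\tau\in\Omega_-$ to rewrite it with the kernel $K_{\mathrm{i}\tau}$, then invoking Corollary~\ref{cor-170622-2210} for the $H^p$ bound and Corollary~\ref{cor-170622-2122} for the boundary behaviour. The one step you should make explicit is that for the \emph{non-tangential} limit you must apply the hypothesis once more at $\alpha=\zeta_0-z$ (which lies in $\Omega_-$ for all sufficiently small $z$ with $\zeta_0+z\in\Omega_\phi(\zeta_0)\cap\Omega_+$), so that $F(\zeta_0+z)=\int_\Gamma K_z(\zeta,\zeta_0)F(\zeta)\,\mathrm{d}\zeta$ for such $z$ and not only for $z=\mathrm{i}\tau$, before Corollary~\ref{cor-170622-2122} can be applied --- exactly as the paper does.
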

\begin{proof}
  ``$\Rightarrow$'': it is obvious if we combine 
  Lemma~\ref{lem-170622-2200} and Lemma~\ref{lem-170522-1424}.
  
  ``$\Leftarrow$'': for $w\in\Omega_+$, define 
  \[F(w)= \frac1{2\pi\mathrm{i}} 
       \int_{\Gamma}\frac{F(\zeta)}{\zeta-w}\,\mathrm{d}\zeta.\]
  If $w_1\in\Omega_+$, then there exists constant $\delta>0$ such that
  the open disk $D(w_1,2\delta)\subset\Omega_+$. 
  Choose $w_2\in D(w_1,\delta)$, then for $\zeta\in\Gamma$,
  \[|\zeta-w_2|\geqslant |\zeta-w_1|-\delta\geqslant \frac12|\zeta-w_1|,\]
  and
  \begin{align*}
    |F(w_1)-F(w_2)|
    &\leqslant \frac1{2\pi} \int_\Gamma 
          \frac{|(w_1-w_2)F(\zeta)|}{|\zeta-w_1||\zeta-w_2|}
          |\mathrm{d}\zeta|                                   \\
    &\leqslant \frac{|w_1-w_2|}{\pi} \int_\Gamma 
          \frac{|F(\zeta)||\mathrm{d}\zeta|}{|\zeta-w_1|^2}   \\
    &\leqslant \frac{|w_1-w_2|}{\pi} \bigg(
        \int_\Gamma |F(\zeta)|^p|\mathrm{d}\zeta|\bigg)^{\frac1p}
        \bigg(\int_\Gamma \frac{|\mathrm{d}\zeta|}{|\zeta-w_1|^{2q}}
           \bigg)^{\frac1q}                              \\
    &\leqslant \frac{|w_1-w_2|}{\pi} 
        \lVert F\rVert_{L^p(\Gamma,|\mathrm{d}\zeta|)}\cdot I.
  \end{align*}
  We could use the same method as in Lemma~\ref{lem-170622-2200} to prove
  that $I$ is bounded by a constant depending only on $w_1$. 
  It follows that
  \[\lim_{w_2\to w_1} F(w_2)= F(w_1).\]
  Now we have proved that $F(w)$ is continous on $\Omega_+$, and it is 
  an easy consequence of Morera's theorem that $F(w)$ is actually 
  analytic on $\Omega_+$.
  
  If we write $w=\zeta_0+\mathrm{i}\tau$ where $\zeta_0\in\Gamma$ 
  and $\tau>0$, then $\zeta_0-\mathrm{i}\tau\in\Omega_-$, and
  \begin{align*}
    F(w)
    &= \frac1{2\pi\mathrm{i}} \int_{\Gamma} F(\zeta) \Big(
         \frac1{\zeta-(\zeta_0+\mathrm{i}\tau)}
         - \frac1{\zeta-(\zeta_0-\mathrm{i}\tau)}\Big)\mathrm{d}\zeta   \\
    &= \int_\Gamma F(\zeta)K_{\mathrm{i}\tau}(\zeta,\zeta_0) 
          \,\mathrm{d}\zeta.
  \end{align*}
  By Corollary~\ref{cor-170622-2210}, $F(w)\in H^p(\Omega_+)$.
  
  For fixed $\zeta_0\in\Gamma$ and $\phi\in(0,\frac{\pi}2)$, if 
  $w\in\Omega_\phi(\zeta_0)\cap\Omega_+$, 
  we write $w=\zeta_0+z$, then there exist $\delta>0$, such that 
  $w_0-z\in\Omega_-$ for all $|z|<\delta$, and
  \begin{align*}
    F(w)
    &= \frac1{2\pi\mathrm{i}} \int_{\Gamma} F(\zeta) \Big(
         \frac1{\zeta-(\zeta_0+z)}- \frac1{\zeta-(\zeta_0-z)}\Big)
         \mathrm{d}\zeta   \\
    &= \int_\Gamma F(\zeta)K_z(\zeta,\zeta_0) 
         \,\mathrm{d}\zeta.
  \end{align*}
  By Corollary~\ref{cor-170622-2122}, $F(w)\to F(\zeta_0)$ if $w\to\zeta_0$, 
  that is $F(w)$ has non-tangential boudary limit $F(\zeta_0)$ 
  at $\zeta_0\in\Gamma$.
  
  Thus, $F(\zeta)$ is the non-tangential boundary limit function of
  $F(w)\in H^p(\Omega_+)$.
\end{proof}

Now we reach our first part of the main theorem's proof.

\begin{proposition}\label{pro-170522-1450}
  For $T$ defined by~\eqref{equ-170602-1910}, we have 
  \[H^p(\mathbb{C}_+)\subset T\big(H^p(\Omega_+)\big), \quad
    \text{or } T^{-1}\big(H^p(\mathbb{C}_+)\big)\subset H^p(\Omega_+).\]
\end{proposition}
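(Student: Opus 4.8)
The plan is to invoke the Cauchy-type criterion of Lemma~\ref{lem-170522-1300}: given $f\in H^p(\mathbb{C}_+)$ I would transport it to the boundary of $\Omega_+$ through the conformal map, check that the transported function satisfies the criterion, extract a function $F\in H^p(\Omega_+)$, and finally identify $TF$ with $f$. An equivalent way to read the statement is that the content is simply that $w\mapsto f(\Psi(w))(\Psi'(w))^{1/p}$ belongs to $H^p(\Omega_+)$, since applying $T$ to it returns $f$ formally (because $\Phi'(\Psi(w))\Psi'(w)=1$). Concretely, I set $\widetilde F(\zeta)=f(\Psi(\zeta))(\Psi'(\zeta))^{1/p}$ for $\zeta\in\Gamma$, using the boundary homeomorphism $\Psi$ and its non-tangential derivative from Lemma~\ref{lem-170706-2130}, where $f$ also denotes its boundary limit on $\mathbb{R}$ and the power is the principal branch, well defined since $\mathrm{Re}\,\Psi'>0$ a.e.\ on $\Gamma$.

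The routine part of the proof is to run the substitution $\zeta=\Phi(t)$, $t\in\mathbb{R}$, which is legitimate by the absolute continuity and null-set properties in Lemma~\ref{lem-170706-2130}(iv),(v). Using $\Psi'(\Phi(t))\Phi'(t)=1$ this gives
\[\int_\Gamma|\widetilde F(\zeta)|^p\,|\mathrm d\zeta|=\int_{\mathbb{R}}|f(t)|^p\,|\Psi'(\Phi(t))|\,|\Phi'(t)|\,\mathrm dt=\int_{\mathbb{R}}|f(t)|^p\,\mathrm dt=\|f\|_{H^p(\mathbb{C}_+)}^p,\]
so $\widetilde F\in L^p(\Gamma)$, and for any $\alpha\notin\overline{\Omega_+}$,
\[\int_\Gamma\frac{\widetilde F(\zeta)}{\zeta-\alpha}\,\mathrm d\zeta=\int_{\mathbb{R}}\frac{f(t)\,(\Phi'(t))^{1/q}}{\Phi(t)-\alpha}\,\mathrm dt=\int_{\mathbb{R}}f(t)\,g(t)\,\mathrm dt,\qquad g(z)=\frac{(\Phi'(z))^{1/q}}{\Phi(z)-\alpha}.\]
By Lemma~\ref{lem-170522-1245} we have $g\in H^q(\mathbb{C}_+)$, and then Lemma~\ref{lem-170522-1424} in the flat case $a\equiv 0$ (i.e.\ the vanishing over $\mathbb{R}$ of the $H^1(\mathbb{C}_+)$ function $fg$) forces this integral to be $0$. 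Hence $\widetilde F$ satisfies the hypothesis of Lemma~\ref{lem-170522-1300}, which produces $F\in H^p(\Omega_+)$ with non-tangential boundary limit $\widetilde F$, equal to the Cauchy integral of $\widetilde F$.

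The main obstacle is the last step, $TF=f$. The function $TF(z)=F(\Phi(z))(\Phi'(z))^{1/p}$ is holomorphic on $\mathbb{C}_+$, and since $\Phi$ preserves angles a.e.\ and $\Phi'$ has non-tangential limits a.e.\ (Lemma~\ref{lem-170706-2130}(iii),(iv)), Corollary~\ref{cor-170706-2100} shows that its non-tangential boundary limit is $\widetilde F(\Phi(x))(\Phi'(x))^{1/p}=f(x)$ a.e.\ on $\mathbb{R}$ (because $\widetilde F(\Phi(x))=f(x)(\Phi'(x))^{-1/p}$). Since $H^p(\mathbb{C}_+)$ functions are determined by their boundary values, it then suffices to prove $TF\in H^p(\mathbb{C}_+)$, and this is where Lemma~\ref{lem-170522-1205} enters: for each $y>0$ the curve $\Gamma^{(y)}=\Phi(\{\mathrm{Im}\,z=y\})$ is the graph of a Lipschitz function of slope at most $M$, $|\Phi'(x+\mathrm iy)|\,\mathrm dx$ is its arc-length element, and so
\[\int_{\mathbb{R}}|TF(x+\mathrm iy)|^p\,\mathrm dx=\int_{\Gamma^{(y)}}|F(w)|^p\,\mathrm ds,\]
reducing matters to bounding $\int_{\Gamma^{(y)}}|F|^p\,\mathrm ds$ uniformly in $y$. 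Because $F$ is the Cauchy integral of $\widetilde F\in L^p(\Gamma)$ and the representation $\int_\Gamma\widetilde F(\zeta)/(\zeta-\alpha)\,\mathrm d\zeta=0$ holds for all $\alpha\in\Omega_-$, one can rewrite $F(w)$ for $w\in\Gamma^{(y)}$ through the kernel $K_z(\zeta,\zeta_0)$, and the bound of Lemma~\ref{lem-170629-2230} together with the common slope bound $M$ of $\Gamma$ and $\Gamma^{(y)}$ should reduce this to a Minkowski-inequality estimate just as in Corollary~\ref{cor-170622-2210}, yielding the bound $C\|\widetilde F\|_{L^p(\Gamma)}^p$; the delicate point is exactly the control of $F$ along these non-vertical curves, which is why Lemma~\ref{lem-170522-1205} is established beforehand. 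A second route, avoiding the $H^p(\mathbb{C}_+)$ membership, is to prove the pointwise identity $TF=f$ directly: after substituting $\zeta=\Phi(t)$ in the Cauchy integral it reduces to showing that, for fixed $z\in\mathbb{C}_+$, the function $t\mapsto (\Phi'(z))^{1/p}(\Phi'(t))^{1/q}(\Phi(t)-\Phi(z))^{-1}-(t-z)^{-1}$ extends to an $H^q(\mathbb{C}_+)$ function of $t$ — the apparent pole at $t=z$ cancels, and $\Phi$ is asymptotically affine at infinity — so that its pairing with $f$ vanishes as in the previous paragraph and $TF(z)=f(z)$.
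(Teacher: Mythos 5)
Your first half coincides with the paper's own argument: setting $\widetilde F(\zeta)=f(\Psi(\zeta))(\Psi'(\zeta))^{1/p}$, checking $\widetilde F\in L^p(\Gamma,|\mathrm{d}\zeta|)$ by the change of variables $\zeta=\Phi(t)$, showing $\int_\Gamma \widetilde F(\zeta)(\zeta-\alpha)^{-1}\,\mathrm{d}\zeta=\int_{\mathbb{R}}f(t)g(t)\,\mathrm{d}t=0$ via Lemma~\ref{lem-170522-1245} and Lemma~\ref{lem-170522-1424} in the flat case, and invoking Lemma~\ref{lem-170522-1300} to produce $F\in H^p(\Omega_+)$ is exactly the paper's route. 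The genuine gap is the identification $TF=f$, which is precisely where the paper spends most of its effort, and neither of your two suggested routes closes it. Your route~2 is, after multiplication by $(\Phi'(z))^{1/p}$, literally the paper's argument: one must show that the difference kernel (the paper's $h$) belongs to $H^q(\mathbb{C}_+)$ as a function of $t$, so that Lemma~\ref{lem-170522-1424} makes its pairing with $f$ vanish. You assert this membership on the grounds that the pole cancels and that ``$\Phi$ is asymptotically affine at infinity''; the removability of the pole is indeed verified in the paper by a short expansion, but the asymptotic-affineness claim is neither proved anywhere nor the actual mechanism. The paper instead splits $h=h_1-h_2$, bounds $\int|h_1(t+\mathrm{i}y)|^q\,\mathrm{d}t$ elementarily away from $x_0$ (and near $x_0$ when $|y-y_0|>\delta_0$), bounds $\int|h_2|^q$ outside a compact box $E_0$ by Lemma~\ref{lem-170522-1245} applied with the sets $E(\alpha,\varepsilon)$ (this is exactly why that lemma is stated for general $E(\alpha,\varepsilon)$ and not only for $\alpha\notin\overline{\Omega_+}$), and handles the box by continuity of the extended $h$. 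Without some such uniform-in-$y$ estimate the conclusion $\int_{\mathbb{R}}f(t)h(t)\,\mathrm{d}t=0$ is not available.

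Your route~1 has a more concrete defect. You reduce matters to $\sup_{y>0}\int_{\Gamma^{(y)}}|F|^p\,\mathrm{d}s<\infty$ with $\Gamma^{(y)}=\Phi(\{\mathrm{Im}\,z=y\})$ (the reduction itself, and the use of Lemma~\ref{lem-170522-1205} to see that $\Gamma^{(y)}$ is a Lipschitz graph of slope at most $M$, are fine), and then propose to obtain the bound ``just as in Corollary~\ref{cor-170622-2210}''. But Corollary~\ref{cor-170622-2210} treats the vertical translate $\Gamma_\tau=\Gamma+\mathrm{i}\tau$, where the height $\tau$ is the same at every point, so $|F|$ on $\Gamma_\tau$ is dominated by a single convolution with the fixed Poisson kernel $P_\tau$ and Minkowski's integral inequality applies. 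Along $\Gamma^{(y)}$ the height above $\Gamma$ varies from point to point, so there is no fixed kernel and that argument does not go through as written; one would instead need a uniform pointwise bound such as $|F(\zeta_0+\mathrm{i}\tau)|\leqslant C\,\mathcal{M}\big(\widetilde F\circ\zeta\big)(u_0)$ by the Hardy--Littlewood maximal function together with its $L^p$-boundedness for $p>1$, a tool the paper never introduces. Both routes can be completed, but as written the decisive step of the proposition is missing.
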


\begin{proof}
  Fix $f(z)\in H^p(\mathbb{C}_+)$, 
  let $F(w)= f(\Psi(w))(\Psi'(w))^{\frac1p}$ for $w\in\Omega_+$,
  then $F(w)$ is holomorphic on $\Omega_+$,
  \[TF(z)
    = F\big(\Phi(z)\big)\big(\Phi'(z)\big)^{\frac1p}
    = f(z),\]
  since $\Phi'(z)\cdot\Psi'(w)=1$ if $\Phi(z)=w$, and $F(w)$ has 
  non-tangential boundary limit $F(\zeta)$ on $\Gamma$. 
  Then for $\alpha\notin\overline{\Omega_+}$, 
  \begin{align*}
    \int_{\Gamma} \frac{F(\zeta)}{\zeta-\alpha}\,\mathrm{d}\zeta
    &= \int_{\Gamma}
         \frac{f\big(\Psi(\zeta)\big)\big(\Psi'(\zeta)\big)^{\frac1p}}
              {\zeta-\alpha}\,\mathrm{d}\zeta                     \\
    &= \int_{\mathbb{R}}
         \frac{f(t)\big(\Phi'(t)\big)^{\frac1q}}{\Phi(t)-\alpha}\,\mathrm{d}t.  
  \end{align*}
  The second equation holds since $\Psi(\zeta)=t\in\mathbb{R}$ 
  for $\zeta\in\Gamma$, and $\Psi'(\zeta)\cdot\Phi'(t)=1$. 
  
  Define
  \[g(z)= \frac{\big(\Phi'(z)\big)^{\frac1q}}{\Phi(z)-\alpha}, \quad
    \text{for } z\in\mathbb{C}_+,\]
  then $g\in H^q(\mathbb{C_+})$, by Lemma~\ref{lem-170522-1245}, and 
  $g(z)$ has non-tangential boundary limit $g(t)$ on $\mathbb{R}$. 
  Thus, by Lemma~\ref{lem-170522-1424},
  \[\int_{\Gamma} \frac{F(\zeta)}{\zeta-\alpha}\,\mathrm{d}\zeta
    = \int_{\mathbb{R}} f(t)g(t) \,\mathrm{d}t
    = 0.\]
  By Lemma~\ref{lem-170522-1300}, $F(\zeta)$ is the non-tangential 
  boundary limit of a function in $H^p(\Omega_+)$, 
  and we let the function be $G(w)$. 
  In the following, we are going to prove that $G(w)=F(w)$.
  
  Since, by Lemma~\ref{lem-170522-1300}, 
  \[G(w)
    =\frac{1}{2\pi\mathrm{i}}\int_{\Gamma}
        \frac{F(\zeta)}{\zeta-w}\,\mathrm{d}\zeta,\]
  then
  \[G(w)
    = \frac{1}{2\pi\mathrm{i}}\int_{\Gamma}
        \frac{f\big(\Psi(\zeta)\big)\big(\Psi'(\zeta)\big)^{\frac1p}}
             {\zeta-w}\,\mathrm{d}\zeta
    = \frac{1}{2\pi\mathrm{i}}\int_{\mathbb{R}}
        \frac{f(t)\big(\Phi'(t)\big)^{\frac1q}}
             {\Phi(t)-w}\,\mathrm{d}t.\]
  From 
  \[F(w)
    = f\big(\Psi(w)\big)\big(\Psi'(w)\big)^{\frac1p}
    = \frac{1}{2\pi\mathrm{i}} \int_{\mathbb{R}}
         \frac{f(t)\,\mathrm{d}t}{t-\Psi(w)} \big(\Psi'(w)\big)^{\frac1p},\]
  we have
  \[F(w)-G(w)
    = \frac{1}{2\pi\mathrm{i}} \int_{\mathbb{R}} f(t) 
         \bigg(\frac{\big(\Psi'(w)\big)^{\frac1p}}{t-\Psi(w)}
            - \frac{\big(\Phi'(t)\big)^{\frac1q}}{\Phi(t)-w}\bigg)
          \,\mathrm{d}t.\]
  Fix $w_0\in\Omega_+$, let $\Psi(w_0)= z_0= x_0+\mathrm{i}y_0$, 
  where $x_0\in\mathbb{R}$, $y_0>0$, and define
  \[h(z)
    = \frac{\big(\Psi'(w_0)\big)^{\frac1p}}{z-\Psi(w_0)}
      - \frac{\big(\Phi'(z)\big)^{\frac1q}}{\Phi(z)-w_0},\]
  for $z\in\mathbb{C}_+\setminus\{z_0\}$, then $h(z)$ is holomorphic, 
  has non-tangential boundary limit $h(t)$ on real axis, and
  \[h(z)
    = \frac{\big(\Phi'(z_0)\big)^{-\frac1p}}{z-z_0}
         - \frac{\big(\Phi'(z)\big)^{\frac1q}}{\Phi(z)-\Phi(z_0)} 
    = h_1(z)- h_2(z).\]
  Since
  \begin{align*}
    \lim_{z\to z_0} h(z)
    &= \lim_{z\to z_0} \frac{
            \big(\Phi'(z_0)\big)^{-\frac1p}\big(\Phi(z)-\Phi(z_0)\big)
            - (z-z_0)\big(\Phi'(z)\big)^{\frac1q}}
          {(z-z_0)^2\Phi'(z_0)}                                   \\
    &= \lim_{z\to z_0} \frac{\big(\Phi'(z_0)\big)^{-\frac1p}\Phi'(z)
            - \big(\Phi'(z)\big)^{\frac1q}
            - (z-z_0)\cdot\frac1q\big(\Phi'(z)\big)^{-\frac1p}\Phi''(z)}
          {2\Phi'(z_0)(z-z_0)}                                     \\
    &= \frac{\big(\Phi'(z_0)\big)^{\frac1q}}{2\big(\Phi'(z_0)\big)^{1+\frac1p}}
       \lim_{z\to z_0} \frac{
          \big(\Phi'(z)\big)^{\frac1p}- \big(\Phi'(z_0)\big)^{\frac1p}}
          {z-z_0}
       - \frac{\Phi''(z_0)}{2q\big(\Phi'(z_0)\big)^{1+\frac1p}}     \\
    &= \frac{1}{2\big(\Phi'(z_0)\big)^{1+\frac1p}}
       \bigg(\big(\Phi'(z_0)\big)^{\frac1q}
          \cdot \frac1p\big(\Phi'(z_0)\big)^{\frac1p-1}\cdot \Phi''(z_0)
          - \frac1q\Phi''(z_0)\bigg)                                 \\
    &= \frac12\Big(\frac1p-\frac1q\Big) 
       \frac{\Phi''(z_0)}{\big(\Phi'(z_0)\big)^{1+\frac1p}},
  \end{align*}
  it follows that $h(z)$ could be extended holomorphically to $\mathbb{C}_+$.

  We are going to show that $h(z)\in H^q(\mathbb{C}_+)$, for then
  $\int_{\mathbb{R}} f(t)h(t)\,\mathrm{d}t= 0$, and $F(w_0)=G(w_0)$.
  Choose $\delta_0>0$ small enough, such that 
  \[E_0=\{x+\mathrm{i}y\in\mathbb{C}_+\colon 
      \lvert x-x_0\rvert\leqslant \delta_0, \lvert y-y_0\rvert\leqslant \delta_0\}
    \subset \mathbb{C}_+,\]
  and write
  \begin{align*}
    I
    &= \int_{\mathbb{R}} \lvert h(t+\mathrm{i}y)\rvert^q\,\mathrm{d}t      \\
    &= \int_{\{t\colon \lvert t-x_0\rvert>\delta_0\}} 
          \lvert h(t+\mathrm{i}y)\rvert^q\,\mathrm{d}t
       + \int_{\{t\colon \lvert t-x_0\rvert\leqslant\delta_0\}} 
           \lvert h(t+\mathrm{i}y)\rvert^q\,\mathrm{d}t        \\
    &= I_1+ I_2.
  \end{align*}
  We then have
  \begin{align*}
    \int_{\lvert t-x_0\rvert>\delta_0} 
      \lvert h_1(t+\mathrm{i}y)\rvert^q\,\mathrm{d}t
    &= \big\lvert\Phi'(z_0)\big\rvert^{-\frac{q}{p}}
       \cdot \int_{\{t\colon \lvert t-x_0\rvert>\delta_0\}} 
          \frac{\mathrm{d}t}{\lvert t+\mathrm{i}y-z_0\rvert^q}         \\
    &\leqslant 2\big\lvert\Phi'(z_0)\big\rvert^{-\frac{q}{p}}
       \int_{x_0+\delta_0}^{+\infty} \frac{\mathrm{d}t}{(t-x_0)^q}            \\
    &= \frac{2}{(q-1)\delta_0^{q-1}}
       \big\lvert\Phi'(z_0)\big\rvert^{-\frac{q}{p}}.
  \end{align*}
  If $\lvert y-y_0\rvert>\delta_0$, then 
  \begin{align*}
    \int_{\{t\colon \lvert t-x_0\rvert\leqslant\delta_0\}} 
        \lvert h_1(t+\mathrm{i}y)\rvert^q\,\mathrm{d}t    
    &\leqslant \big\lvert\Phi'(z_0)\big\rvert^{-\frac{q}{p}} 
        \int_{\{t\colon \lvert t-x_0\rvert\leqslant\delta_0\}} 
          \frac{\mathrm{d}t}{\lvert y-y_0\rvert^q}                \\
    &\leqslant \big\lvert\Phi'(z_0)\big\rvert^{-\frac{q}{p}} 
        \frac{2\delta_0}{\delta_0^q}
      = \frac{2}{\delta_0^{q-1}} \big\lvert\Phi'(z_0)\big\rvert^{-\frac{q}{p}}.
  \end{align*}

  Denote $E_y=\{t\in\mathbb{R}\colon t+\mathrm{i}y\notin E_0\}$ for fixed $y$.
  Since $\Phi(E_0)\subset\Omega_+$, there exists $\varepsilon>0$, such that  
  $\lvert \Phi(z)-\Phi(z_0)\rvert\geqslant\varepsilon_1$, 
  for $z\in\mathbb{C}_+\setminus E_0$. 
  By Lemma~\ref{lem-170522-1245},
  \[\int_{E_y} \lvert h_2(t+\mathrm{i}y)\rvert^q\,\mathrm{d}t
    = \int_{E_y} \frac{\lvert\Phi'(t+\mathrm{i}y)\rvert\,\mathrm{d}t}
        {\lvert\Phi(t+\mathrm{i}y)-\Phi(z_0)\rvert^q}      
    \leqslant \frac{2^{q+1}\sqrt{1+M^2}}{(q-1)\varepsilon_1^{q-1}}.\]

  Thus, by Minkowski's inequality, $I_1$ for all $y>0$, 
  and $I_2$ for $\lvert y-y_0\rvert>\delta_0$, are bounded, 
  and the boundaries are independent of $y$. Since $h(z)$ is continuous 
  on the compact set $E_0$, we could denote
  $M_1=\max\{\lvert h(z)\rvert\colon z\in E_0\}$, 
  then for $\lvert y-y_0\rvert\leqslant \delta_0$,
  \[I_2
    = \int_{\lvert t-x_0\rvert\leqslant\delta_0} 
         \lvert h(t+\mathrm{i}y)\rvert^q\,\mathrm{d}t
    \leqslant M_1^q\cdot 2\delta_0.\]
  Thus, $I$ is bounded, independent of $y$, for all $y>0$, 
  and $h(z)\in H^q(\mathbb{C}_+)$. By Lemma~\ref{lem-170522-1424}, 
  \[F(w_0)-G(w_0)
    = \frac{1}{2\pi\mathrm{i}} \int_{\mathbb{R}} f(t)h(t)\,\mathrm{d}t
    = 0,\]
  and $F(w)=G(w)\in H^p(\Omega_+)$, which proves the theorem.
\end{proof}

Thus we could define $T^{-1}$, the inverse of $T$, on $H^p(\mathbb{C}_+)$ as 
\[T^{-1}f(w)= f\big(\Psi(w)\big)\big(\Psi'(w)\big)^{\frac1p},\quad
  \text{for } w\in\Omega_+.\]

The analogies of Lemma~\ref{lem-170522-1205} and Lemma~\ref{lem-170522-1245} 
are the following two lemmas.

\begin{lemma}\label{lem-170511-0936}
	If holomorphic representation $\Psi(w)\colon \Omega_+\to\mathbb{C}_+$
  is the inverse of $\Phi(z)$, then 
  \[\mathrm{Re\,}\Psi'(w)>0,\quad\text{and } 
    \lvert\mathrm{Im\,}\Psi'(w)\rvert\leqslant M\mathrm{Re\,}\Psi'(w)\]
  for all $w\in\Omega_+$.
\end{lemma}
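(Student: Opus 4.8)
The plan is to obtain Lemma~\ref{lem-170511-0936} as an immediate consequence of Lemma~\ref{lem-170522-1205}, using only the chain-rule identity $\Phi'(z)\Psi'(w)=1$, valid whenever $w=\Phi(z)$. In particular, no approximation of $\Gamma$ by polygonal lines is needed this time, since all of that analytic work was already carried out in Lemma~\ref{lem-170522-1205}; the present proof is a short deduction.

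First I would fix an arbitrary $w\in\Omega_+$ and set $z=\Psi(w)\in\mathbb{C}_+$, so that $\Phi(z)=w$. Since $\Phi$ is a conformal bijection of $\mathbb{C}_+$ onto $\Omega_+$, its derivative never vanishes, and differentiating $\Psi(\Phi(z))=z$ gives $\Psi'(w)\Phi'(z)=1$, hence $\Psi'(w)=1/\Phi'(z)\neq 0$. Thus the correspondence $w\mapsto z=\Psi(w)$ identifies the set of values $\Psi'(w)$, $w\in\Omega_+$, with the set of values $1/\Phi'(z)$, $z\in\mathbb{C}_+$, and it suffices to control $\arg\Psi'(w)=-\arg\Phi'(z)$.

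Next I would invoke Lemma~\ref{lem-170522-1205}: for $z\in\mathbb{C}_+$ we have $\mathrm{Re}\,\Phi'(z)>0$ and $\lvert\mathrm{Im}\,\Phi'(z)\rvert\leqslant M\,\mathrm{Re}\,\Phi'(z)$, which is the same as saying that, writing $\Phi'(z)=\lvert\Phi'(z)\rvert\mathrm{e}^{\mathrm{i}\phi}$ with $\phi\in(-\frac\pi2,\frac\pi2)$, one has $\lvert\phi\rvert\leqslant\arctan M=\theta_0$. Then $\Psi'(w)=\lvert\Phi'(z)\rvert^{-1}\mathrm{e}^{-\mathrm{i}\phi}$, so the argument of $\Psi'(w)$ may be taken to be $-\phi$, which again lies in $[-\theta_0,\theta_0]\subset(-\frac\pi2,\frac\pi2)$. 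Consequently $\mathrm{Re}\,\Psi'(w)=\lvert\Psi'(w)\rvert\cos\phi>0$, and since $\lvert\tan\phi\rvert\leqslant\tan\theta_0=M$,
\[
  \lvert\mathrm{Im}\,\Psi'(w)\rvert=\lvert\Psi'(w)\rvert\,\lvert\sin\phi\rvert=\mathrm{Re}\,\Psi'(w)\,\lvert\tan\phi\rvert\leqslant M\,\mathrm{Re}\,\Psi'(w),
\]
which is the desired pair of inequalities.

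The only point requiring a little care is to make sure that the branch of $\arg\Psi'(w)$ used above is the principal one (equivalently, the value in $(-\frac\pi2,\frac\pi2)$), so that it genuinely governs the signs and sizes of $\mathrm{Re}\,\Psi'(w)$ and $\mathrm{Im}\,\Psi'(w)$; this is automatic because $\lvert\phi\rvert\leqslant\theta_0<\frac\pi2$. Beyond this bookkeeping there is essentially no obstacle, so I would expect the proof to be one short paragraph.
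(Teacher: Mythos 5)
Your proposal is correct and follows essentially the same route as the paper: both deduce the lemma from Lemma~\ref{lem-170522-1205} via the identity $\Phi'(z)\Psi'(w)=1$, the only cosmetic difference being that the paper writes $\Psi'(w)=\overline{\Phi'(z)}/\lvert\Phi'(z)\rvert^2$ and compares real and imaginary parts directly, while you phrase the same computation in polar form through $\arg\Psi'(w)=-\arg\Phi'(z)$.
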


\begin{proof}
  Let $\Phi(z)=w\in\Omega_+$, then $z\in\mathbb{C}_+$, 
  $\Phi'(z)\cdot\Psi'(w)=1$, and 
  \[\Psi'(w)
    = \frac{1}{\Phi'(z)}
    = \frac{\overline{\Phi'(z)}}{\lvert\Phi'(z)\rvert^2},\]
  \[\mathrm{Re}\,\Psi'(w)
    = \frac{\mathrm{Re}\,\Phi'(z)}{\lvert\Phi'(z)\rvert^2}
    > 0, \quad
    \mathrm{Im}\,\Psi'(w)
    = \frac{-\mathrm{Im}\,\Phi'(z)}{\lvert\Phi'(z)\rvert^2},\]
  by Lemma~\ref{lem-170522-1205}, thus
  \[\frac{\lvert\mathrm{Im\,}\Psi'(w)\rvert}{\mathrm{Re\,}\Psi'(w)}
    = \frac{\lvert\mathrm{Im\,}\Phi'(z)\rvert}{\mathrm{Re\,}\Phi'(z)}
    \leqslant M,\]
  and the lemma is proved.
\end{proof}
Again, we have $\Psi'(w)\in\Sigma$ for all $w\in\Omega_+$.

\begin{lemma}\label{lem-170522-1536}
  Suppose $1<q<\infty$, $\alpha\in\mathbb{C}$, $\varepsilon>0$, 
  $E(\alpha,\varepsilon)=\{w\in\Omega_+\colon
      |\Psi(w)-\alpha|>\varepsilon\}$. 
  Let $E_{\tau}= \{u\in\mathbb{R}\colon u+\mathrm{i}a(u)+\mathrm{i}\tau
    \in E(\alpha,\varepsilon)\}$ for $\tau>0$, then 
  \[I= \int_{E_{\tau}}
       \frac{\big\lvert\Psi'\big(u+\mathrm{i}a(u)+\mathrm{i}\tau\big)
         	     \big(1+\mathrm{i}a'(u)\big)\big\rvert}
            {\big\lvert\Psi\big(u+\mathrm{i}a(u)+\mathrm{i}\tau\big)
             	 - \alpha\big\rvert^q}\,\mathrm{d}u\]
  is bounded, and the boundary is independent of $\tau$.

  Consequently, if $\alpha\notin\overline{\mathbb{C}_+}$, and we define
  \[G(w)=\frac{\big(\Psi'(w)\big)^{\frac1q}}{\Psi(w)-\alpha},\quad
    \text{for } w\in\Omega_+,\]
  then $G\in H^q(\Omega_+)$.
\end{lemma}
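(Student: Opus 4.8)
The plan is to follow the proof of Lemma~\ref{lem-170522-1245} almost verbatim, with $\Psi$, $\Omega_+$ and the curves $\Gamma_\tau$ now playing the roles that $\Phi$, $\mathbb{C}_+$ and the horizontal lines $\{\mathrm{Im}\,z=y\}$ played there. First the ``consequently'' clause is easy: if $\alpha\notin\overline{\mathbb{C}_+}$ then $\mathrm{Im}\,\alpha<0$, and since $\Psi(w)\in\mathbb{C}_+$ for every $w\in\Omega_+$ we get $|\Psi(w)-\alpha|\ge\mathrm{Im}\,\Psi(w)+|\mathrm{Im}\,\alpha|>|\mathrm{Im}\,\alpha|$ throughout $\Omega_+$; taking $\varepsilon=|\mathrm{Im}\,\alpha|$ gives $E(\alpha,\varepsilon)=\Omega_+$ and $E_\tau=\mathbb{R}$, so
\[\int_{\Gamma_\tau}|G(w)|^{q}\,\mathrm{d}s
  =\int_{\mathbb{R}}\frac{\big|\Psi'\!\big(u+\mathrm ia(u)+\mathrm i\tau\big)\big|\,\big|1+\mathrm ia'(u)\big|}
        {\big|\Psi\!\big(u+\mathrm ia(u)+\mathrm i\tau\big)-\alpha\big|^{q}}\,\mathrm du=I,\]
whence the uniform bound on $I$ yields $\sup_{\tau>0}\int_{\Gamma_\tau}|G|^{q}\,\mathrm{d}s<\infty$, i.e.\ $G\in H^{q}(\Omega_+)$.

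It therefore remains to bound $I$. Put $w(u)=u+\mathrm ia(u)+\mathrm i\tau$ and $z(u)=\Psi(w(u))$; then $z'(u)=\Psi'(w(u))(1+\mathrm ia'(u))$, so $|z'(u)|=|\Psi'(w(u))|\,|1+\mathrm ia'(u)|$ is precisely the numerator in the integrand of $I$, and
\[I=\int_{\{u\,:\,|z(u)-\alpha|>\varepsilon\}}\frac{|z'(u)|}{|z(u)-\alpha|^{q}}\,\mathrm du
   =\int_{\Psi(\Gamma_\tau)\setminus D(\alpha,\varepsilon)}\frac{|\mathrm dz|}{|z-\alpha|^{q}},\]
a line integral along part of the curve $\Psi(\Gamma_\tau)\subset\mathbb{C}_+$. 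Exactly as in Lemma~\ref{lem-170522-1245}, where $|z-\alpha|>\varepsilon$ one has $|z-\alpha|\ge\tfrac12\big(|\mathrm{Re}\,z-\mathrm{Re}\,\alpha|+\varepsilon\big)$, so it is enough to bound $\int|z'(u)|\big(|\mathrm{Re}\,z(u)-\mathrm{Re}\,\alpha|+\varepsilon\big)^{-q}\,\mathrm du$; and by Lemma~\ref{lem-170511-0936} we have $\mathrm{Re}\,\Psi'>0$, $|\arg\Psi'(w)|\le\arctan M$, and $|\Psi'(w)|\le\sqrt{1+M^{2}}\,\mathrm{Re}\,\Psi'(w)$.

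The step I expect to be the main obstacle is the change of variables $\sigma=\mathrm{Re}\,z(u)=\mathrm{Re}\,\Psi(w(u))$. In Lemma~\ref{lem-170522-1245} one integrates along a horizontal line, so $\tfrac{\mathrm d}{\mathrm dt}\mathrm{Re}\,\Phi(t+\mathrm iy)=\mathrm{Re}\,\Phi'(t+\mathrm iy)>0$ for free; here $\Gamma_\tau$ is tilted, and
\[\sigma'(u)=\mathrm{Re}\big[\Psi'(w(u))(1+\mathrm ia'(u))\big]
  =\mathrm{Re}\,\Psi'(w(u))-a'(u)\,\mathrm{Im}\,\Psi'(w(u)),\]
whose sign needs to be pinned down. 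Since $\arg\Psi'(w(u))$ and $\arg(1+\mathrm ia'(u))$ both lie in $[-\arctan M,\arctan M]$, the tangent direction $\arg z'(u)$ lies in $[-2\arctan M,2\arctan M]$; when $M<1$ this is a subinterval of $(-\tfrac\pi2,\tfrac\pi2)$, so $\sigma'(u)>0$, $\sigma$ is strictly increasing, $|z'(u)|\le C(M)\,\sigma'(u)$ with $C(M)=1/\cos(2\arctan M)=(1+M^{2})/(1-M^{2})$, and, bounding by the integral over all of $\mathbb{R}$,
\[I\le C(M)\int_{\mathbb{R}}\frac{\mathrm d\sigma}{2^{-q}\big(|\sigma-\mathrm{Re}\,\alpha|+\varepsilon\big)^{q}}
     =\frac{C(M)\,2^{q+1}}{(q-1)\,\varepsilon^{q-1}},\]
uniformly in $\tau$. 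For an arbitrary Lipschitz constant $M$ one cannot expect $\sigma$ to be monotone, and the cleanest remedy is to break the line integral $\int_{\Psi(\Gamma_\tau)\setminus D(\alpha,\varepsilon)}|z-\alpha|^{-q}\,|\mathrm dz|$ over the dyadic annuli $\{2^{k-1}\varepsilon\le|z-\alpha|<2^{k}\varepsilon\}$ ($k\ge1$), which reduces the matter to a bound $\mathrm{length}\big(\Psi(\Gamma_\tau)\cap D(z_0,R)\big)\le C(M)\,R$ uniform in $\tau$ and $z_0$ — a chord--arc estimate for the conformal image of the Lipschitz graph $\Gamma_\tau$ — after which the resulting bound on $I$ has exactly the same shape.
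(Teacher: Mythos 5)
Your reduction and your treatment of the case $M<1$ are sound, and you have correctly isolated the real difficulty: along the tilted curve $\Gamma_\tau$ the quantity $\frac{\mathrm{d}}{\mathrm{d}u}\mathrm{Re}\,\Psi\big(u+\mathrm{i}a(u)+\mathrm{i}\tau\big)$ need not be positive once $2\arctan M\geqslant\frac{\pi}{2}$, so the monotone change of variables from Lemma~\ref{lem-170522-1245} does not apply directly. But for general $M$ your argument stops at exactly that point: the dyadic-annuli decomposition is only a reduction, and the estimate it requires, namely $\mathrm{length}\big(\Psi(\Gamma_\tau)\cap D(z_0,R)\big)\leqslant C(M)R$ uniformly in $\tau$ and $z_0$, is asserted, not proved. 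This is a genuine gap, and not a small one: the only information you have put on the table is that the tangent direction of $u\mapsto\Psi\big(u+\mathrm{i}a(u)+\mathrm{i}\tau\big)$ lies in the cone $|\arg|\leqslant 2\arctan M$, and when $2\arctan M\geqslant\frac{\pi}{2}$ that cone contains the directions $\pm\frac{\pi}{2}$, so a curve satisfying only this cone condition can oscillate vertically and pack arbitrarily much length into a fixed disc. Hence the chord--arc claim cannot follow from the ingredients you invoke; it would need genuinely new input about the conformal map $\Psi$ (e.g.\ machinery of the type in Kenig's work), which makes your ``remedy'' at least as hard as the lemma itself.

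The paper closes this gap with an elementary device you are missing. By Lemma~\ref{lem-170511-0936}, $\Psi'(w)$ always lies in the sector $\Sigma=\{x+\mathrm{i}y\colon x>0,\ |y|\leqslant Mx\}$ of half-angle $\theta_0=\arctan M$. Split $\Sigma$ into $N$ sub-sectors $\Sigma_j$ of angular width $\theta_1=\frac{2\theta_0}{N}<\frac12\big(\frac{\pi}{2}-\theta_0\big)$, and split $E_\tau$ into the sets $E_{\tau j}$ on which $\Psi'\big(u+\mathrm{i}a(u)+\mathrm{i}\tau\big)\in\Sigma_j$. On $E_{\tau j}$ consider the rotated function $h_j(u)=\Psi\big(u+\mathrm{i}a(u)+\mathrm{i}\tau\big)\mathrm{e}^{\mathrm{i}(\theta_0-(j-1)\theta_1)}$: its derivative $\Psi'\cdot(1+\mathrm{i}a'(u))\cdot\mathrm{e}^{\mathrm{i}(\theta_0-(j-1)\theta_1)}$ has argument in $[-\theta_0,\theta_0+\theta_1]\subset\big(-\frac{\pi}{2},\frac{\pi}{2}\big)$, so $\mathrm{Re}\,h_j$ is increasing there and
\[\Big\lvert\frac{\mathrm{d}h_j}{\mathrm{d}u}\Big\rvert
  \leqslant \sqrt{1+\tan^2(\theta_0+\theta_1)}\;\mathrm{Re}\,\frac{\mathrm{d}h_j}{\mathrm{d}u},\]
after which your own change-of-variables computation (applied with the rotated point $\alpha\mathrm{e}^{\mathrm{i}(\theta_0-(j-1)\theta_1)}$ in place of $\alpha$) bounds each piece by $\frac{2^{q+1}M_1}{(q-1)\varepsilon^{q-1}}$, and summing over the $N$ pieces gives a bound depending only on $M$, $q$, $\varepsilon$. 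In short: instead of trying to prove a geometric chord--arc estimate for $\Psi(\Gamma_\tau)$, decompose according to the \emph{direction of $\Psi'$} into finitely many pieces and rotate each piece so that your $M<1$ argument applies verbatim; with that replacement your proof is complete, and the ``consequently'' clause goes through as you wrote it.
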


\begin{proof}
  Remember that $\Sigma=\{(x,y)\in\mathbb{C}\colon x>0, 
    \lvert y\rvert\leqslant Mx\}$. We then divide $\Sigma$ into 
  $N$ equally parts by drawing $N-1$ half-lines from the origin, such that 
  the angle between two adjacent half-lines is 
  $\frac{2\theta_0}{N}<\frac12(\frac{\pi}{2}-\theta_0)$, 
  and denote that angle as $\theta_1$. That is, 
  $\Sigma=\bigcup_{j=1}^{N} \Sigma_j$, and for $1\leqslant j\leqslant N$, 
  \[\Sigma_j
    =\big\{r\mathrm{e}^{\mathrm{i}\theta}\in\mathbb{C}\colon 
       r>0,\ \theta-\bigl(-\theta_0+(j-1)\theta_1\bigr)\in[0,\theta_1]\big\}.\]

  Denote $E_{\tau j}=\{u\in E_{\tau}\colon
     \arg\Psi'(u+ \mathrm{i}a(u)+ \mathrm{i}\tau)\in\Sigma_j\}$,
  for $j=1$, $\ldots$, $N$. Then we write 
  \begin{align*}
    I
    &= \sum_{j=1}^{N} \int_{E_{\tau j}}
         \frac{\big\lvert\Psi'\big(u+\mathrm{i}a(u)+\mathrm{i}\tau\big)
         	       \big(1+\mathrm{i}a'(u)\big)\big\rvert}
              {\big\lvert\Psi\big(u+\mathrm{i}a(u)+\mathrm{i}\tau\big)
               	 - \alpha\big\rvert^q} \,\mathrm{d}u                      \\
    &= \sum_{j=1}^{N} I_{\tau j}.
  \end{align*}

  Now we fix $j$, and let $h_j(u)= \Psi(u+\mathrm{i}a(u)+\mathrm{i}\tau)\cdot
    \mathrm{e}^{\mathrm{i}(\theta_0-(j-1)\theta_1)}$, for $u\in E_{\tau j}$, then 
  \[\frac{\mathrm{d}h_j}{\mathrm{d}u}
    = \Psi'\big(u+\mathrm{i}a(u)+\mathrm{i}\tau\big)\big(1+\mathrm{i}a'(u)\big)
      \cdot \mathrm{e}^{\mathrm{i}(\theta_0-(j-1)\theta_1)},\]
  which follows that 
  \[\arg\frac{\mathrm{d}h_j}{\mathrm{d}u}
    \in [-\theta_0,\theta_0+\mathrm{i}\theta_1]
    \subset \Bigl(-\frac{\pi}{2},\frac{\pi}{2}\Bigr),\]
  thus $\mathrm{Re}\,\frac{\mathrm{d}h_j}{\mathrm{d}u}> 0$, 
  and $\mathrm{Re}\,h_j(u)$ is an increasing function of $u$. Besides,
  \[\Big\lvert\frac{\mathrm{d}h_j}{\mathrm{d}u}\Big\rvert
    \leqslant \sqrt{1+\tan^2(\theta_0+\theta_1)}
      \mathrm{Re}\,\frac{\mathrm{d}h_j}{\mathrm{d}u}
    = M_1 \mathrm{Re}\,\frac{\mathrm{d}h_j}{\mathrm{d}u}.\]

  Let $\alpha\mathrm{e}^{\mathrm{i}(\theta_0-(j-1)\theta_1)}
    = z_{0j}= x_{0j}+\mathrm{i}y_{0j}$, where $x_{0j}$, $y_{0j}\in\mathbb{R}$. 
  Since $\lvert\Psi(w)-\alpha\rvert\geqslant \varepsilon$ for $w\in E$, then 
  \begin{align*}
    \big\lvert\Psi\big(u+\mathrm{i}a(u)+\mathrm{i}\tau\big)- \alpha\big\rvert
    &= \lvert h_j(u)- z_{0j}\rvert                 \\
    &\geqslant \frac12\big(\lvert\mathrm{Re}\, h_j(u)-x_{0j}\rvert
          +\varepsilon\big),
  \end{align*}
  and we have
  \begin{align*}
    I_{\tau j}
    &= \int_{E_{\tau j}} \frac{\lvert\mathrm{d}h_j(u)\rvert}
          {\lvert h_j(u)-z_{0j}\rvert^q}                                \\
    &\leqslant \int_{\mathbb{R}} \frac{M_1\,\mathrm{d\,\big(Re}\,h_j(u)\big)}
        {2^{-q}\big(\lvert\mathrm{Re}\, h_j(u)-x_{0j}\rvert
           + \varepsilon\big)^q}                                \\
    &\leqslant \int_{\mathbb{R}} \frac{M_1\,\mathrm{d}t}
        {2^{-q}(|t|+\varepsilon)^q}                             \\
    &= \frac{2^{q+1} M_1}{(q-1)\varepsilon^{q-1}},
  \end{align*}
  and then
  \[I
    = \sum_{j=1}^{N} I_{\tau j}
    \leqslant \frac{2^{q+1} M_1 N}{(q-1)\varepsilon^{q-1}}.\]

  If $\alpha\notin\overline{\mathbb{C}_+}$, there exists $\varepsilon>0$, 
  such that $\lvert\Psi(w)-\alpha\rvert> \varepsilon$ 
  for all $w\in\Omega_+$. Then $E_{\tau}=\mathbb{R}$, and
  \[\int_{\Gamma} \lvert G(\zeta+\mathrm{i}\tau)\rvert^q 
      \lvert\mathrm{d}\zeta\rvert
    = \int_{\mathbb{R}} \frac{\lvert\Psi'\big(u+\mathrm{i}a(u)+\mathrm{i}\tau\big)
         	   \big(1+\mathrm{i}a'(u)\big)\rvert \,\mathrm{d}u}
          {\lvert\Psi\big(u+\mathrm{i}a(u)+\mathrm{i}\tau\big)
           	 - \alpha\rvert^q}\]
  is bounded and the boundary is independent of $\tau$. Thus, 
  $G\in H^q(\Omega_+)$, by definition.
\end{proof}

We could now prove that $T$ maps $H^p(\Omega_+)$ into $H^p(\mathbb{C}_+)$, 
which is the second part of our proof.
\begin{proposition}\label{pro-170601-1725}
  We have
  \[T\big(H^p(\Omega_+)\big)\subset H^p(\mathbb{C}_+).\]
\end{proposition}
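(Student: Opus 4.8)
The plan is to bound the $H^p(\mathbb{C}_+)$-mean of $TF$ at each height $y>0$ by transporting it, through the change of variables $w=\Phi(x+\mathrm{i}y)$, to an arc-length integral over the image curve $\Phi(\mathbb{R}+\mathrm{i}y)\subset\Omega_+$, and then estimating that integral by the Poisson maximal function of the boundary trace of $F$.

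First I would record that $F\in H^p(\Omega_+)$ has an upright boundary limit $F(\zeta)\in L^p(\Gamma)$ (Section~2) with $\int_\Gamma|F(\zeta)|^p\,|\mathrm{d}\zeta|\leqslant\|F\|_{H^p(\Omega_+)}^p$ by Fatou's lemma, so that $g(u):=|F(u+\mathrm{i}a(u))|$ lies in $L^p(\mathbb{R})$ with $\|g\|_{L^p(\mathbb{R})}^p\leqslant\|F\|_{H^p(\Omega_+)}^p$. Fixing $y>0$, I would change variables by $t=\mathrm{Re}\,\Phi(x+\mathrm{i}y)$; since $\mathrm{d}t/\mathrm{d}x=\mathrm{Re}\,\Phi'(x+\mathrm{i}y)>0$ by Lemma~\ref{lem-170522-1205}, this is a strictly increasing parametrization onto an interval $J\subseteq\mathbb{R}$. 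Writing $\Phi(x+\mathrm{i}y)=t+\mathrm{i}\beta_y(t)$ and using $|\Phi'|\leqslant\sqrt{1+M^2}\,\mathrm{Re}\,\Phi'$ (Lemma~\ref{lem-170522-1205} again), I obtain
\[
  \int_{\mathbb{R}}|TF(x+\mathrm{i}y)|^p\,\mathrm{d}x
  =\int_{\mathbb{R}}|F(\Phi(x+\mathrm{i}y))|^p\,|\Phi'(x+\mathrm{i}y)|\,\mathrm{d}x
  \leqslant\sqrt{1+M^2}\int_{J}|F(t+\mathrm{i}\beta_y(t))|^p\,\mathrm{d}t .
\]

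Next, for a point $w=t+\mathrm{i}\beta_y(t)\in\Omega_+$ I would set $\zeta_0=t+\mathrm{i}a(t)\in\Gamma$ and $\tau=\beta_y(t)-a(t)>0$, so that $\zeta_0+\mathrm{i}\tau=w$ and $\zeta_0-\mathrm{i}\tau\in\Omega_-$; the Cauchy representation of $F$ from Section~2 then gives $F(w)=\int_\Gamma K_{\mathrm{i}\tau}(\zeta,\zeta_0)F(\zeta)\,\mathrm{d}\zeta$. Applying Lemma~\ref{lem-170622-2050}, together with $|\zeta-\zeta_0|\geqslant|u-t|$ and $\mathrm{d}s\leqslant\sqrt{1+M^2}\,\mathrm{d}u$, yields
\[
  |F(w)|\leqslant C\int_\Gamma\frac{\tau\,|F(\zeta)|}{|\zeta-\zeta_0|^2+\tau^2}\,|\mathrm{d}\zeta|
  \leqslant\pi C\sqrt{1+M^2}\,(P_\tau\ast g)(t)
  \leqslant\pi C\sqrt{1+M^2}\,g^{\ast}(t),
\]
where $P_\tau(s)=\frac1\pi\cdot\frac{\tau}{s^2+\tau^2}$ is the Poisson kernel and $g^{\ast}(t)=\sup_{\sigma>0}(P_\sigma\ast g)(t)$ its maximal function. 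Inserting this into the previous display,
\[
  \int_{\mathbb{R}}|TF(x+\mathrm{i}y)|^p\,\mathrm{d}x
  \leqslant\sqrt{1+M^2}\,\bigl(\pi C\sqrt{1+M^2}\bigr)^p\int_{\mathbb{R}}g^{\ast}(t)^p\,\mathrm{d}t ,
\]
and I would finish by invoking the $L^p$ bound $\|g^{\ast}\|_{L^p(\mathbb{R})}\leqslant C_p\|g\|_{L^p(\mathbb{R})}$, valid for $1<p<\infty$ (see \cite{02}, \cite{06}), to get a bound $C'\|F\|_{H^p(\Omega_+)}^p$ independent of $y$; hence $TF\in H^p(\mathbb{C}_+)$ (and in fact $T$ is bounded).

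The hard part is exactly this last step. The ``height'' $\tau=\beta_y(t)-a(t)$ of a point of $\Phi(\mathbb{R}+\mathrm{i}y)$ above $\Gamma$ varies with $t$, so one cannot evaluate a Poisson convolution at one fixed level and apply Minkowski's inequality as was done in Corollary~\ref{cor-170622-2210}; the fluctuating height has to be absorbed into $g^{\ast}$, and it is the $L^p$-boundedness of the maximal operator $g\mapsto g^{\ast}$ --- which breaks down at $p=1$ --- that makes the restriction $p>1$ genuinely necessary here. The remaining ingredients (the change of variables, and that $t\mapsto x$ is a legitimate increasing parametrization) are immediate from $\mathrm{Re}\,\Phi'>0$.
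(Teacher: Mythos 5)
Your argument is correct, but it follows a genuinely different route from the paper. The paper works at the level of boundary values: it first checks that $f=TF$ has boundary data in $L^p(\mathbb{R})$, forms the Cauchy integral $g$ of that data (so $g\in H^p(\mathbb{C}_+)$ by classical theory), and then proves $f\equiv g$ pointwise by writing $f(z_0)-g(z_0)=\frac{1}{2\pi\mathrm{i}}\int_\Gamma F(\zeta)H(\zeta)\,\mathrm{d}\zeta$ for an auxiliary kernel $H(w)$ which is shown to extend holomorphically across $w_0$ and to lie in $H^q(\Omega_+)$ --- this is where the sector-splitting estimate of Lemma~\ref{lem-170522-1536} is needed --- after which the integral vanishes by the orthogonality Lemma~\ref{lem-170522-1424}. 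You instead estimate the means $m(TF,y)$ directly in the interior: the change of variables $t=\mathrm{Re}\,\Phi(x+\mathrm{i}y)$ is legitimate precisely because of Lemma~\ref{lem-170522-1205} ($\mathrm{Re}\,\Phi'>0$ and $|\Phi'|\leqslant\sqrt{1+M^2}\,\mathrm{Re}\,\Phi'$), the Cauchy representation of $F$ and the vanishing of the Cauchy integral in $\Omega_-$ (the facts quoted from \cite{01} in Section~2, used the same way in Corollary~\ref{cor-170706-2100}) give the kernel formula with $K_{\mathrm{i}\tau}$, Lemma~\ref{lem-170622-2050} converts it into a Poisson average, and the variable height $\tau=\beta_y(t)-a(t)$ is absorbed into the Poisson maximal function, whose $L^p$-boundedness for $1<p<\infty$ closes the estimate. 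Your route is shorter, avoids the auxiliary kernel $H$, Lemma~\ref{lem-170522-1536} and Lemma~\ref{lem-170522-1424} altogether, and yields an explicit (if non-sharp) bound on $\lVert T\rVert$ depending only on $M$ and $p$; its price is the appeal to the Hardy--Littlewood/Poisson maximal theorem, an external tool which genuinely forces $p>1$, as you observe. The paper's longer argument buys something extra: it identifies $TF$ as the Cauchy integral of its boundary trace on $\mathbb{R}$, which is what allows the remark after the proposition to conclude $\lVert T\rVert\leqslant 1$ via Fatou's lemma, and it stays entirely within the conformal-mapping machinery the paper develops.
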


\begin{proof}
  Fix $F\in H^p(\Omega_+)$, and let $f(z)= TF(z)$. 
  Since $F(w)$ has non-tangential boundary limit $F(\zeta)$ on $\Gamma$, and 
  \[F(w)
    = \frac{1}{2\pi\mathrm{i}} \int_{\Gamma}
        \frac{F(\zeta)}{\zeta-w}\,\mathrm{d}\zeta,\]
  by the remarks which follow Corollary~\ref{cor-170706-2100}, 
  then $f(z)$ has non-tangential boundary limit $f(x)$ on real axis, and 
  \[f(z)
    = F\big(\Phi(z)\big)\big(\Phi'(z)\big)^{\frac1p} 
    = \frac{1}{2\pi\mathrm{i}} \int_{\Gamma} 
          \frac{F(\zeta)\,\mathrm{d}\zeta}{\zeta-\Phi(z)}
          \big(\Phi'(z)\big)^{\frac{1}{p}}.\]
  Since 
  \begin{align*}
    \int_{\mathbb{R}} \lvert f(x)\rvert^p\,\mathrm{d}x
    &= \int_{\mathbb{R}} \big\lvert F\big(\Phi(x)\big)\big\rvert^p
        \cdot \lvert \Phi'(x)\rvert\,\mathrm{d}x           \\
    &= \int_{\Gamma} \lvert F(\zeta)\rvert^p \lvert\mathrm{d}\zeta\rvert
     \leqslant \lVert F\rVert_{H^p(\Omega_+)}^p
     < +\infty,
  \end{align*}
  we have $f(x)\in L^p(\mathbb{R},\,\mathrm{d}x)$. 
  For $z\in\mathbb{C}_+$, define 
  \[g(z)
    = \frac{1}{2\pi\mathrm{i}} \int_{\mathbb{R}}
        \frac{f(t)}{t-z}\,\mathrm{d}t,\]
  then $g(z)\in H^p(\mathbb{C}_+)$~\cite{02}. We are going to prove that
  $f(z)=g(z)$, which will finish the proof of the proposition. 
  Let $t=\Psi(\zeta)$ in the above integral, then 
  \[g(z)
    = \frac{1}{2\pi\mathrm{i}} \int_{\Gamma}
        \frac{f\big(\Psi(\zeta)\big)\Psi'(\zeta)}{\Psi(\zeta)-z}
        \,\mathrm{d}\zeta,\]
  and by $\Phi\big(\Psi(\zeta)\big)= \zeta$ or 
  $\Phi'\big(\Psi(\zeta)\big)\cdot \Psi'(\zeta)= 1$,
  \[f\big(\Psi(\zeta)\big)
    = F(\zeta)\big(\Psi'(\zeta)\big)^{-\frac{1}{p}},\]
  we have
  \[g(z)
    = \frac{1}{2\pi\mathrm{i}} \int_{\Gamma}
        \frac{F(\zeta)\big(\Psi'(\zeta)\big)^{\frac{1}{q}}}
           {\Psi(\zeta)-z}\,\mathrm{d}\zeta,\]
  then
  \[f(z)-g(z)
    = \frac{1}{2\pi\mathrm{i}} \int_{\Gamma}
        F(\zeta) \bigg(\frac{\big(\Phi'(z)\big)^{\frac{1}{p}}}{\zeta-\Phi(z)}
          - \frac{\big(\Psi'(\zeta)\big)^{\frac{1}{q}}}{\Psi(\zeta)-z}\bigg)
        \,\mathrm{d}\zeta.\]

  For fixed $z_0\in\mathbb{C}_+$, denote $\Phi(z_0)$ as 
  $w_0= u_0+\mathrm{i}a(u_0)+\mathrm{i}\tau_0$, 
  where $u_0$, $\tau_0\in\mathbb{R}$, then $\tau_0>0$, $w_0\in\Omega_+$, 
  $z_0=\Psi(w_0)$, $\Phi'(z_0)\cdot\Psi'(w_0)=1$, and there exists $\delta>0$, 
  such that open disk $D(w_0,\delta)\subset\Omega_+$. We could write
  \[f(z_0)-g(z_0)
    = \frac{1}{2\pi\mathrm{i}} \int_{\Gamma} F(\zeta)H(\zeta) \,\mathrm{d}\zeta,\]
  where
  \[H(\zeta)
    = \frac{\big(\Phi'(z_0)\big)^{\frac{1}{p}}}{\zeta-\Phi(z_0)}
      - \frac{\big(\Psi'(\zeta)\big)^{\frac{1}{q}}}{\Psi(\zeta)-z_0}
    = \frac{\big(\Psi'(w_0)\big)^{-\frac{1}{p}}}{\zeta-w_0}
      - \frac{\big(\Psi'(\zeta)\big)^{\frac{1}{q}}}{\Psi(\zeta)-\Psi(w_0)}.\]

  Define
  \[H(w)
    = \frac{\big(\Psi'(w_0)\big)^{-\frac{1}{p}}}{w-w_0}
      - \frac{\big(\Psi'(w)\big)^{\frac{1}{q}}}{\Psi(w)-\Psi(w_0)}
    = H_1(w)- H_2(w),\]
  for $w\in\Omega_+\setminus\{w_0\}$, then $H(w)$ is holomorphic since $\Psi$ is 
  a holomorphic representation from $\Omega_+$ onto $\mathbb{C}_+$, 
  and its boundary limit is $H(\zeta)$, where $\zeta\in\Gamma$.

  By the Laurent series expansion of holomorphic functions,
  \[\Psi(w)-\Psi(w_0)
    = \Psi'(w_0)(w-w_0)+\frac{\Psi''(w_0)}{2!}(w-w_0)^2+\cdots,\]
  for $w\in D(w_0,\delta)$, we have 
  $\Psi'(w)=\Psi'(w_0)+\Psi''(w_0)(w-w_0)+\cdots$, and
  \begin{align*}
    H_2(w)
    &= \frac{\big(\Psi'(w_0)+\Psi''(w_0)(w-w_0)+\cdots\big)^{\frac1q}}
         {\Psi'(w_0)(w-w_0)+\frac{\Psi''(w_0)}{2!}(w-w_0)^2+\cdots}     \\
    &= \frac{\big(\Psi'(w_0)\big)^{-\frac1p}}{w-w_0}
       \bigg(1+\frac{\Psi''(w_0)}{\Psi'(w_0)}(w-w_0)+\cdots\bigg)^{\frac1q}
       \bigg(1+\frac{\Psi''(w_0)}{2\Psi'(w_0)}(w-w_0)+\cdots\bigg)^{-1}   \\
    &= \frac{\big(\Psi'(w_0)\big)^{-\frac1p}}{w-w_0}
       \bigg(1+\frac{\Psi''(w_0)}{q\Psi'(w_0)}(w-w_0)+\cdots\bigg)
       \bigg(1-\frac{\Psi''(w_0)}{2\Psi'(w_0)}(w-w_0)+\cdots\bigg)        \\
    &= \frac{\big(\Psi'(w_0)\big)^{-\frac1p}}{w-w_0}
       \bigg(1+\Big(\frac1q-\frac12\Big)\frac{\Psi''(w_0)}{\Psi'(w_0)}(w-w_0)
         + \cdots\bigg)                                                  \\
    &= \frac{\big(\Psi'(w_0)\big)^{-\frac1p}}{w-w_0}
       + \Big(\frac1q-\frac12\Big)
           \frac{\Psi''(w_0)}{\big(\Psi'(w_0)\big)^{1+\frac1p}}
       +\cdots,
  \end{align*}
  then $H(w)$ could be extended holomorphically to $\Omega_+$, and
  \[H(w_0)
    = \Big(\frac12-\frac1q\Big)
      \frac{\Psi''(w_0)}{\big(\Psi'(w_0)\big)^{1+\frac1p}}.\]

  If we could prove that $H(w)\in H^q(\Omega_+)$, 
  then by Lemma~\ref{lem-170522-1424},
  \[f(z_0)-g(z_0)
    = \int_{\Gamma} F(\zeta)H(\zeta)\,\mathrm{d}\zeta= 0,\]
  and $f(z)= g(z)\in H^p(\mathbb{C}_+)$. For this reason, we are going to estimate
  \[I= \int_{\Gamma_{\tau}} \lvert H(w)\rvert^q\lvert\mathrm{d}w\rvert,
    \quad\text{for } \tau>0.\]

  Remember that $w_0= u_0+\mathrm{i}a(u_0)+\mathrm{i}\tau_0\in\Omega_+$, 
  $u_0\in\mathbb{R}$, $\tau_0>0$, and let $\delta_0>0$ be a small number 
  such that
  \[E_0= \bigl\{(u,v)\in\Omega_+\colon
      \lvert u-u_0\rvert\leqslant\delta_0,
      \ \big\lvert v-\big(a(u)+\tau_0\big)\big\rvert\leqslant\delta_0\bigr\}\]
  is contained in $D(w_0,\delta)$. For fixed $\tau>0$, define
  \[E_1= \{(u,v)\in\Gamma_{\tau}\colon \lvert u-u_0\rvert> \delta_0\},
    \quad E_2= \Gamma_{\tau}\setminus E_1,\]
  and we write
  \begin{align*}
    I
    &= \int_{E_1} \lvert H(w)\rvert^q\lvert\mathrm{d}w\rvert
       + \int_{E_2} \lvert H(w)\rvert^q\lvert\mathrm{d}w\rvert     \\
    &= I_1+ I_2.
  \end{align*}
  Then
  \begin{align*}
    \int_{E_1} \lvert H_1(w)\rvert^q\lvert\mathrm{d}w\rvert
    &= \frac{1}{\lvert\Psi'(w_0)\rvert^{\frac{q}{p}}} \int_{E_1}
    	    \frac{\lvert\mathrm{d}w\rvert}{\lvert w-w_0\rvert^q}       \\
    &\leqslant \frac{1}{\lvert\Psi'(w_0)\rvert^{\frac{q}{p}}} 
        \int_{\lvert u-u_0\rvert>\delta_0}
           \frac{\sqrt{1+M^2}\mathrm{d}u}{\lvert u-u_0\rvert^q}      \\
    &= \frac{2\delta_0^{1-q}\sqrt{1+M^2}}
            {(q-1)\lvert\Psi'(w_0)\rvert^{\frac{q}{p}}}.
  \end{align*}
  If $\lvert\tau-\tau_0\rvert>\delta_0$, then there exists $\delta_1>0$, such that
  $\lvert w-w_0\rvert> \delta_1$ for $w\in\Gamma_{\tau}$, and
  \begin{align*}
    \int_{E_2} \lvert H_1(w)\rvert^q\lvert\mathrm{d}w\rvert
    &= \frac{1}{\lvert\Psi'(w_0)\rvert^{\frac{q}{p}}} 
       \int_{E_2} \frac{\lvert\mathrm{d}w\rvert}{\lvert w-w_0\rvert^q}       \\
    &\leqslant \frac{\sqrt{1+M^2}}{\lvert\Psi'(w_0)\rvert^{\frac{q}{p}}} 
        \int_{\lvert u-u_0\rvert\leqslant\delta_0}
           \frac{\mathrm{d}u}{\delta_1^q}                      \\
    &= \frac{2\delta_0\sqrt{1+M^2}}
            {\delta_1^q\lvert\Psi'(w_0)\rvert^{\frac{q}{p}}}.
  \end{align*}

  Since $\Psi$ is a holomorphic representation from $\Omega_+$ onto $\mathbb{C}_+$, 
  $\Psi(E_0)$ is an open set in $\mathbb{C}_+$, and there exists $\varepsilon_0>0$, 
  such that $\lvert\Psi(w)-\Psi(w_0)\rvert\geqslant \varepsilon_0$ for
  $w\in\Omega_+\setminus E_0$. For fixed $\tau$, denote 
  $E_{\tau}=\{u\in\mathbb{R}\colon 
     u+ \mathrm{i}a(u)+ \mathrm{i}\tau\notin E_0\}$, 
  then by Lemma~\ref{lem-170522-1536},
  \[\int_{E_\tau} \lvert H_2(w)\rvert^q\lvert\mathrm{d}w\rvert
    = \int_{E_{\tau}}
         \frac{\big\lvert\Psi'\big(u+\mathrm{i}a(u)+\mathrm{i}\tau\big)
          	     \big(1+\mathrm{i}a'(u)\big)\big\rvert}
              {\big\lvert\Psi\big(u+\mathrm{i}a(u)+\mathrm{i}\tau\big)
                 - \Psi(w_0)\big\rvert^q}\,\mathrm{d}u\]
  is bounded, and the boundary is independent of $\tau$.
  
  Thus, by Minkowski's inequality, we have showed that, $I_1$ for all $\tau>0$, 
  and $I_2$ for $\lvert\tau-\tau_0\rvert> \delta_0$, are bounded, 
  and the boundaries are independent of $\tau$. Since $H(w)$ is continuous on 
  compact set $E_0$, we could denote 
  $M_2=\max\{\lvert H(w)\rvert\colon w\in E_0\}$, 
  then for $|\tau-\tau_0|\leqslant\delta_0$,
  \begin{align*}
    I_2
    &= \int_{E_2} \big\lvert H\big(u+\mathrm{i}a(u) 
            +\mathrm{i}\tau\big)\big\rvert^q
         \cdot \lvert 1+\mathrm{i}a'(u)\rvert \,\mathrm{d}u                \\
    &\leqslant M_2^q\cdot \sqrt{1+M^2}\cdot 2\delta_0.
  \end{align*}
  It follows that $I$ is bounded, independent of $\tau$, for all $\tau>0$, 
  and $H(w)\in H^q(\Omega_+)$, then we have proved that
  \[TF(z)=f(z)=g(z)\in H^p(\mathbb{C}_+).\]
  Thus, $T(H^p(\Omega_+))\subset H^p(\mathbb{C}_+)$.
\end{proof}

Besides, by Fatou's Lemma, 
\[\lVert f\rVert_{H^p(\mathbb{C}_+)}^p
  = \int_{\mathbb{R}} \lvert f(t)\rvert^p \,\mathrm{d}t
  = \int_{\Gamma} \lvert F(\zeta)\rvert^p \lvert\mathrm{d}\zeta\rvert
  \leqslant \lVert F\rVert_{H^p(\Omega_+)}^p,\]
that is, $\lVert T\rVert\leqslant 1$.

\begin{lemma}\label{lem-170601-1652}
  Suppose that $p>0$, $F\in H^p(\Omega_+)$, and $\tau>0$, then
  \[|F(\zeta+\mathrm{i}\tau)|
    \leqslant C\lVert F\rVert_{H^p(\Omega_+)}\tau^{-\frac1p},\quad
    \text{for }\zeta\in\Gamma,\]
  where $C=(\frac{2+2M^2}{\pi})^{\frac1p}$.
\end{lemma}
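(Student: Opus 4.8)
The plan is to read off the pointwise bound from the subharmonicity of $|F|^p$ together with a foliation of a small disk around $w_0=\zeta+\mathrm{i}\tau$ by the translated curves $\Gamma_t$. For holomorphic $F$ and any $p>0$ the function $|F|^p$ is subharmonic on $\Omega_+$ (on $\{F\neq 0\}$ it equals $\mathrm{e}^{p\log|F|}$ with $\log|F|$ harmonic, and at the zeros of $F$ the sub-mean-value inequality holds trivially), so for every disk with $\overline{D(w_0,\rho)}\subset\Omega_+$,
\[|F(w_0)|^p\leqslant\frac1{\pi\rho^2}\int_{D(w_0,\rho)}|F(w)|^p\,\mathrm{d}A(w).\]
Writing $\zeta=u_0+\mathrm{i}a(u_0)$ and $w_0=u_0+\mathrm{i}\bigl(a(u_0)+\tau\bigr)$, the first task is to produce the largest admissible $\rho$ in terms of $\tau$ and $M$; the second is to bound the area integral by $\lVert F\rVert_{H^p(\Omega_+)}^p$.

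For the disk: since $a$ is $M$-Lipschitz, the open cone $\{x+\mathrm{i}y\colon y>a(u_0)+M|x-u_0|\}$ is contained in $\Omega_+$ (there $a(x)\leqslant a(u_0)+M|x-u_0|<y$). This cone is the intersection of the two half-planes bounded by the lines through $\zeta$ of slope $\pm M$, and $w_0$ lies at distance exactly $\tau/\sqrt{1+M^2}$ from each of those lines. Hence, with $R=\tau/\sqrt{1+M^2}$, the open disk $D(w_0,R)$ lies inside the cone, hence inside $\Omega_+$, and the displayed sub-mean-value inequality is available for every $\rho<R$.

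For the area integral I would perform the shear $w=u+\mathrm{i}v\mapsto(u,t)$, $t=v-a(u)$, whose Jacobian is $1$; it maps $D(w_0,\rho)$ onto $\{(u,t)\colon(u-u_0)^2+(t-\tau+a(u)-a(u_0))^2<\rho^2\}$ and identifies the slice $\{t=\mathrm{const}\}$ with part of $\Gamma_t$. On that region, with $A=|t-\tau+a(u)-a(u_0)|$ and $B=|u-u_0|$ (so $A^2+B^2<\rho^2$), the Lipschitz bound and Cauchy--Schwarz give $|t-\tau|\leqslant A+MB\leqslant\sqrt{1+M^2}\,\sqrt{A^2+B^2}<\sqrt{1+M^2}\,R=\tau$, so $0<t<2\tau$ throughout. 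Therefore
\[\int_{D(w_0,\rho)}|F|^p\,\mathrm{d}A\leqslant\int_0^{2\tau}\Bigl(\int_{\mathbb{R}}\bigl|F\bigl(u+\mathrm{i}a(u)+\mathrm{i}t\bigr)\bigr|^p\,\mathrm{d}u\Bigr)\mathrm{d}t\leqslant\int_0^{2\tau}\Bigl(\int_{\Gamma_t}|F|^p\,\mathrm{d}s\Bigr)\mathrm{d}t\leqslant 2\tau\,\lVert F\rVert_{H^p(\Omega_+)}^p,\]
using $\mathrm{d}u\leqslant\mathrm{d}s$ on $\Gamma_t$ and the definition of the $H^p(\Omega_+)$ norm (with $t>0$). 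Combining with the previous display and letting $\rho\uparrow R$ yields
\[|F(\zeta+\mathrm{i}\tau)|^p\leqslant\frac{2\tau}{\pi R^2}\lVert F\rVert_{H^p(\Omega_+)}^p=\frac{2(1+M^2)}{\pi\tau}\lVert F\rVert_{H^p(\Omega_+)}^p,\]
which is exactly the claimed bound with $C=\bigl(\tfrac{2+2M^2}{\pi}\bigr)^{1/p}$.

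The only place that needs real care is the estimate of the height range: a crude bound such as $|t-\tau|\leqslant A+MB\leqslant(1+M)\rho$ would replace $2\tau$ by $2(1+M)R$ and produce the weaker constant $C^p=\tfrac{2(1+M)\sqrt{1+M^2}}{\pi}$, so getting precisely the stated $C$ requires combining the two constraints $A\le\rho$ and $B\le\rho$ through Cauchy--Schwarz as above. Justifying the subharmonicity of $|F|^p$ for $0<p<1$ is classical and presents no obstacle.
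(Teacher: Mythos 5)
Your proof is correct and follows essentially the same route as the paper: subharmonicity of $|F|^p$, the sub-mean-value inequality on the disk of radius $\tau/\sqrt{1+M^2}$ about $\zeta+\mathrm{i}\tau$, and a bound of the area integral by foliating with the translated curves $\Gamma_t$, $0<t<2\tau$, giving the factor $2\tau/(\pi d^2)$ and the stated constant. In fact you supply details the paper leaves implicit (the cone argument showing $D(w_0,\tau/\sqrt{1+M^2})\subset\Omega_+$, the unit-Jacobian shear, and the Cauchy--Schwarz estimate pinning the height range to $(0,2\tau)$), so no changes are needed.
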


\begin{proof}
  Let $w_0=\zeta+\mathrm{i}\tau=u_0+\mathrm{i}v_0\in\Omega_+$, 
  $d=\frac{\tau}{\sqrt{1+M^2}}$, and 
  $D=\{w\in\mathbb{C}\colon |w-w_0|<d\}$, then $D\subset\Omega_+$. 
  We know that $|F|^p$ is subharmonic since $F$ is holomorphic on $\Omega_+$, 
  hence
  \begin{align*}
    |F(w_0)|^p
    &\leqslant \frac{1}{\pi d^2} \iint_{|w-w_0|<d} 
        |F(w)|^p\,\mathrm{d}\lambda(w)            \\
    &\leqslant \frac{1}{\pi d^2} \int_{v_0-\tau}^{v_0+\tau}\!\!\!
        \int_{\Gamma} |F(\zeta+\mathrm{i}v)|^p \,\mathrm{d}s\,\mathrm{d}v   \\
    &\leqslant \frac{2\tau}{\pi d^2}\lVert F\rVert_{H^p(\Omega_+)}^p   
     = \frac{2(1+M^2)}{\pi\tau} \lVert{F}\rVert_{H^p(\Omega_+)}^p,
  \end{align*}
  where $\mathrm{d}\lambda$ is the area measure on $\mathbb{C}$, 
  it follows that 
  \[|F(\zeta+\mathrm{i}\tau)| 
    \leqslant \Big(\frac{2+2M^2}{\pi}\Big)^{\frac1p} 
        \lVert F\rVert_{H^p(\Omega_+)}\tau^{-\frac1p},\]
  for $\zeta\in\Gamma$ and $\tau>0$.
\end{proof}

\begin{lemma}
  $H^p(\Omega_+)$ is complete, or $H^p(\Omega_+)$ is a Banach space.
\end{lemma}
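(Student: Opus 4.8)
The plan is to show that every Cauchy sequence $\{F_n\}$ in $H^p(\Omega_+)$ converges in norm to an element of $H^p(\Omega_+)$, following the standard Fatou-lemma argument for Hardy-type spaces. First I would obtain local uniform control of the sequence. Any point $w\in\Omega_+$ can be written as $w=\zeta+\mathrm{i}\tau$ with $\zeta=\mathrm{Re}\,w+\mathrm{i}a(\mathrm{Re}\,w)\in\Gamma$ and $\tau=\mathrm{Im}\,w-a(\mathrm{Re}\,w)>0$, so Lemma~\ref{lem-170601-1652} gives
\[|F_n(w)-F_m(w)|\leqslant C\lVert F_n-F_m\rVert_{H^p(\Omega_+)}\,\tau^{-1/p}.\]
On a compact set $K\subset\Omega_+$ the quantity $\tau$ is bounded below by a positive constant: as in the proof of Lemma~\ref{lem-170601-1652}, the disk $D(w,\tau/\sqrt{1+M^2})$ lies in $\Omega_+$, hence $\mathrm{dist}(w,\Gamma)\geqslant\tau/\sqrt{1+M^2}$ and so $\tau\geqslant\sqrt{1+M^2}\,\mathrm{dist}(K,\Gamma)>0$. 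Therefore $\{F_n\}$ is uniformly Cauchy on $K$, so $F_n$ converges uniformly on compact subsets of $\Omega_+$ to a function $F$, which is holomorphic on $\Omega_+$ by Weierstrass' theorem.

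Next I would verify that $F\in H^p(\Omega_+)$. Since a Cauchy sequence is bounded, $A:=\sup_n\lVert F_n\rVert_{H^p(\Omega_+)}<\infty$. For each fixed $\tau>0$ we have $F_n(\zeta+\mathrm{i}\tau)\to F(\zeta+\mathrm{i}\tau)$ for every $\zeta\in\Gamma$, so Fatou's lemma gives
\[\int_{\Gamma_\tau}|F(w)|^p\,\mathrm{d}s\leqslant\liminf_{n\to\infty}\int_{\Gamma_\tau}|F_n(w)|^p\,\mathrm{d}s\leqslant A^p,\]
and taking the supremum over $\tau>0$ yields $\lVert F\rVert_{H^p(\Omega_+)}\leqslant A<\infty$.

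Finally, for norm convergence, given $\varepsilon>0$ choose $N$ with $\lVert F_n-F_m\rVert_{H^p(\Omega_+)}<\varepsilon$ for all $n,m\geqslant N$. Fix $n\geqslant N$ and $\tau>0$; letting $m\to\infty$ and applying Fatou's lemma as above gives $\int_{\Gamma_\tau}|F_n(w)-F(w)|^p\,\mathrm{d}s\leqslant\varepsilon^p$, and taking the supremum over $\tau$ gives $\lVert F_n-F\rVert_{H^p(\Omega_+)}\leqslant\varepsilon$. Hence $F_n\to F$ in $H^p(\Omega_+)$, so the space is complete. The only genuinely delicate point is the first step — extracting a lower bound for $\tau$ on compact subsets so that Lemma~\ref{lem-170601-1652} delivers local uniform control; once that is in place, the rest is a routine application of Fatou's lemma together with the boundedness of Cauchy sequences.
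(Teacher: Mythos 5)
Your proof is correct and takes essentially the same route as the paper: Lemma~\ref{lem-170601-1652} yields locally uniform convergence of the Cauchy sequence to a holomorphic limit $F$, and Fatou's lemma then gives both $F\in H^p(\Omega_+)$ and norm convergence. (One small slip in your first step: the inclusion $D(w,\tau/\sqrt{1+M^2})\subset\Omega_+$ gives $\mathrm{dist}(w,\Gamma)\geqslant \tau/\sqrt{1+M^2}$, i.e.\ an upper bound $\tau\leqslant\sqrt{1+M^2}\,\mathrm{dist}(w,\Gamma)$, not the lower bound you wrote; but the bound you actually need is immediate, since $\tau=|w-\zeta|$ with $\zeta\in\Gamma$ directly below $w$, so $\tau\geqslant\mathrm{dist}(w,\Gamma)\geqslant\mathrm{dist}(K,\Gamma)>0$ on any compact $K\subset\Omega_+$.)
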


\begin{proof}
  Suppose $\{F_n(w)\}$ is a Cauchy sequence in $H^p(\Omega_+)$, that is 
  \[\lVert F_n-F_m\rVert_{H^p(\Omega_+)}\to 0,\quad
    \text{as }n,m\to\infty.\]

  If $\tau>0$, then, by Lemma~\ref{lem-170601-1652}, 
  for $w=\zeta+\mathrm{i}\tau\in\Omega_+$ with $\zeta\in\Gamma$ and $\tau>0$,
  \[\lvert F_n(w)- F_m(w)\rvert^p
    \leqslant \frac{C}{\tau} \lVert F_n-F_m\rVert_{H^p(\Omega_+)}^p,\]
  where $C$ is a positive constant. It follows that $\{F_n(w)\}$ converges
  on compact sets in $\Omega_+$, and we could assume that 
  $\{F_n(w)\}$ converges to a holomorphic function $F(w)$ on $\Omega_+$.

  For any $\varepsilon>0$, there exists $m\in\mathbb{N}$, such that if $n>m$, 
  then $\lVert F_n-F_m\rVert_{H^p(\Omega_+)}<\varepsilon$. By Fatou's lemma,
  \[\int_{\Gamma}\lvert F(\zeta+\mathrm{i}\tau)
        - F_m(\zeta+\mathrm{i}\tau)\rvert^p \lvert\mathrm{d}\zeta\rvert
    \leqslant \lim_{n\to\infty} \int_{\Gamma}\lvert F_n(\zeta+\mathrm{i}\tau)
        - F_m(\zeta+\mathrm{i}\tau)\rvert^p \lvert\mathrm{d}\zeta\rvert
    \leqslant \varepsilon^p,\]
  or $\lVert F-F_m\rVert_{H^p(\Omega_+)}\leqslant\varepsilon$, 
  then $F(w)\in H^p(\Omega_+)$ as 
  $\lVert F\rVert_{H^p(\Omega_+)}
    \leqslant \varepsilon+ \lVert F_m\rVert_{H^p(\Omega_+)}$.
  Besides, $\lVert F-F_n\rVert_{H^p(\Omega_+)}<2\varepsilon$ for all $n>m$, 
  which means that $H^p(\Omega_+)$ is complete.
\end{proof}

\begin{proposition}\label{pro-170601-1730}
  $T^{-1}\colon H^p(\mathbb{C}_+)\to H^p(\Omega_+)$ is bounded.
\end{proposition}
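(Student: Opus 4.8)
The plan is to obtain this as an immediate consequence of the Open Mapping Theorem, so that no further estimates involving $\Psi$ are needed. First I would assemble the four facts already established in the preceding pages: (i) $T$ is linear and injective, directly from the formula~\eqref{equ-170602-1910} together with the fact that $\Phi'(z)\neq 0$ on $\mathbb{C}_+$; (ii) $T$ maps $H^p(\Omega_+)$ \emph{onto} $H^p(\mathbb{C}_+)$, which is Proposition~\ref{pro-170522-1450} (surjectivity, realized by $T^{-1}f(w)=f(\Psi(w))(\Psi'(w))^{1/p}$) combined with Proposition~\ref{pro-170601-1725} (which guarantees that $T$ actually takes values in $H^p(\mathbb{C}_+)$); (iii) $T$ is bounded, in fact $\lVert T\rVert\leqslant 1$, by the Fatou-lemma estimate recorded immediately after the proof of Proposition~\ref{pro-170601-1725}; and (iv) both $H^p(\Omega_+)$ and $H^p(\mathbb{C}_+)$ are Banach spaces --- the former by the completeness lemma proved just above, the latter classically (see \cite{06} or \cite{03}).

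With these in hand the proof is one line: $T$ is a bounded linear bijection between Banach spaces, hence by the Open Mapping Theorem (equivalently the Bounded Inverse Theorem) its inverse $T^{-1}\colon H^p(\mathbb{C}_+)\to H^p(\Omega_+)$ is bounded, which is exactly the assertion of the proposition. This simultaneously completes the proof of Theorem~\ref{thm-170601-1721}, since linearity, bijectivity, boundedness of $T$, and boundedness of $T^{-1}$ together are precisely what that theorem claims.

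I would deliberately \emph{avoid} a direct estimate, and it is worth noting why. A hands-on argument would require bounding $\sup_{\tau>0}\int_{\Gamma_\tau}\lvert f(\Psi(w))\rvert^{p}\lvert\Psi'(w)\rvert\,\lvert\mathrm{d}w\rvert$ by $C\lVert f\rVert_{H^p(\mathbb{C}_+)}^{p}$, and the obstruction is that $\Psi$ does not carry the translated Lipschitz graphs $\Gamma_\tau$ onto the horizontal lines $\{\mathrm{Im}\,z=c\}$; thus the clean change of variables that produced $\int_\Gamma\lvert F(\zeta)\rvert^p\lvert\mathrm{d}\zeta\rvert=\int_{\mathbb{R}}\lvert f(x)\rvert^p\,\mathrm{d}x$ on the boundary has no analogue on the interior level curves. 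One would instead have to compare each $\Gamma_\tau$ with the image under $\Phi$ of a suitable horizontal line and control the conformal distortion --- feasible in principle, but exactly the computation the abstract route renders unnecessary.

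Consequently the only genuine ``obstacle'' here is already behind us: it consisted of proving the completeness of $H^p(\Omega_+)$ and the surjectivity of $T$, both of which are now in place, so the present proposition costs nothing beyond citing the Open Mapping Theorem.
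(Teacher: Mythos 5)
Your proposal is correct and is essentially identical to the paper's own proof: the paper also deduces boundedness of $T^{-1}$ directly from the Open Mapping Theorem, using that $T$ is a bounded linear bijection between the Banach spaces $H^p(\Omega_+)$ (completeness proved in the preceding lemma) and $H^p(\mathbb{C}_+)$. Your additional remarks on why a direct estimate along the curves $\Gamma_\tau$ is avoided are accurate but not needed.
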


\begin{proof}
  Since both $H^p(\mathbb{C}_+)$ and $H^p(\Omega_+)$ are Banach spaces,
  the linear transform $T\colon H^p(\Omega_+)\to H^p(\mathbb{C}_+)$
  is one-to-one, onto and bounded, then $T^{-1}$ is also bounded, 
  by the open mapping theorem.
\end{proof}

The proof of Theorem~\ref{thm-170601-1721} is now completed once we combine 
the proofs of Proposition~\ref{pro-170522-1450}, 
Proposition~\ref{pro-170601-1725}, and Proposition~\ref{pro-170601-1730}.

\section*{Funding}
This work is supported by National Natural Science Foundation 
of China(Grant No.\@ 11271045).

\end{document}